\newcommand{\xdownarrow}[1]{%
  {\left\downarrow\vbox to #1{}\right.\kern-\nulldelimiterspace}
}
\definecolor{greenpigment}{rgb}{0.0, 0.65, 0.31}
\definecolor{iceberg}{rgb}{0.44, 0.65, 0.82}
\newcommand{\R}{\mathbb{R}}
\newcommand{\C}{\mathbb{C}}
\newcommand{\N}{\mathbb{N}}
\newcommand{\Z}{\mathbb{Z}}
\newcommand{\E}{\mathbb{E}}
\newcommand{\F}{\mathbb{F}}
\newcommand{\HH}{\mathbb{H}}
\newcommand{\CC}{\mathbf{C}}
\newcommand{\U}{\mathcal{U}}
\newcommand{\DD}{\mathcal{D}}
\newcommand{\g}{\mathfrak{g}}
\newcommand{\kk}{\mathfrak{k}}
\newcommand{\aL}{\mathfrak{a}}
\newcommand{\End}{\text{End}}
\newcommand{\Hom}{\text{Hom}}
\newcommand{\Hol}{\text{Hol}}
\newcommand{\Pol}{\text{Pol}}
\newcommand{\Ker}{\text{Ker}}
\newcommand{\Impart}{\text{Im}}
\newcommand{\Repart}{\text{Re}}
\newcommand{\Id}{\text{Id}}
\newcommand*{\QEDA}{\null\nobreak\hfill\ensuremath{\square}}%
\newcommand{\SL}{\mathbf{SL}}
\newcommand{\SO}{\mathbf{SO}}
\newcommand{\SU}{\mathbf{SU}}
\theoremstyle{plain}
\numberwithin{equation}{section}
\theoremstyle{plain}
\newtheorem{cor}{Corollary}
\newtheorem{prop}{Proposition}
\newtheorem{thm}{Theorem}
\newtheorem{hyp}{Hypothesis}
\newtheorem{conj}{Conjecture}
\newtheorem{lem}{Lemma}
\newtheorem{defn}{Definition}
\theoremstyle{definition}
\theoremstyle{remark}
\newtheorem{req}{Remark}
\title{\textbf{Solvability of invariant systems of differential equations on $\HH^2$ and beyond}}
\author[1]{Martin \textsc{Olbrich}}
\author[1]{Guendalina \textsc{Palmirotta}\footnote{corresponding author, e-mail: \href{mailto:guendalina.palmirotta@uni.lu}{guendalina.palmirotta@uni.lu}}}
\affil[1]{University of Luxembourg, Faculty of Science, Technology and Medicine, Department of Mathematics, 6, Avenue de la Fonte,  L-4364 Esch-sur-Alzette,  Luxembourg}
\date{} 
\begin{document}

\maketitle

\abstract
We show how the Fourier transform for distributional sections of vector bundles over symmetric spaces of non-compact type $G/K$ can be used for questions of solvability of systems of invariant differential equations in analogy to Hörmander's proof of the Ehrenpreis-Malgrange theorem.
We get complete solvability for the hyperbolic plane $\HH^2$ and partial results for products $\HH^2 \times \cdots \times \HH^2$ and the hyperbolic 3-space $\HH^3$.\\

\noindent
\textbf{Mathematics Subject Classification:} 58J50, 22E30.\\
\textbf{Keywords:} Linear Differential Operators,  Paley-Wiener spaces, Symmetric spaces, Hyperbolic spaces, Homogeneous vector bundles, Representation theory of semi-simple Lie groups, Intertwining operators.

\tableofcontents

\section{Introduction}
In the Euclidean case $\R^n$ (for some positive integer $n$), it is well-known by Malgrange \cite{Malgrange} and Ehrenpreis \cite{Ehrenpreis1} that all linear differential operators with constant coefficients
$$
D=\sum_{|\alpha| \leq m} c_\alpha D^\alpha, \;\;\; 
D^\alpha=\frac{\partial^{|\alpha|}}{\partial_{x_1}^{\alpha_1} \cdots \partial_{x_n}^{\alpha_n}},
\; \alpha=(\alpha_1, \dots, \alpha_n), \alpha_j \in \N_0,\; |\alpha|=\sum_{j=1}^n \alpha_j
$$
are solvable in the space of smooth functions $C^\infty(\R^n)$.
Even more, there exists a fundamental solution $E$ for $D$, which is a distribution
on $\R^n$ such that $DE=\delta$ and consequently, the function $f:=E*g$
is a solution of the equation $Df=g$ at least for $g$ with compact support.
Here, $\delta$ denotes the Dirac measure or delta-distribution at the origin on $\R^n$.
Constant coefficients operators on $\R^n$ are exactly invariant differential operators, if we consider $\R^n$ as a Lie group of translations.
Speaking of Lie groups $G$, one may ask the analog question for invariant differential operators, which are left invariant under $G$:
\begin{equation} \label{eq:DOleft}
D(f \circ l_g)=(Df) \circ l_g, \;\;\; f\in C^\infty(G), \forall g \in G,
\end{equation}
where $l_g$ is the left translation on $G$.
Such differential operators are, in general, not even locally solvable.
On the other hand, the situation is much better for non-zero linear differential operators $D$, which are bi-invariant under $G$, this means that \eqref{eq:DOleft} holds not only for the left but also for the right translation on $G$.
In fact, $D$ is locally solvable for simply connected nilpotent Lie groups by Raïs \cite{Rais}, for solvable Lie groups by Raïs-Duflo \cite{DufloRais} and Rouvière \cite{Rouviere}, and for semi-simple Lie groups by Helgason \cite{HelgasonL}.
Some interesting results for more general groups have been found in \cite{Cerezo} and \cite{HelgasonL}.
Now, if we consider the quotient $X=G/K$ with $G$ a non-compact connected semi-simple Lie group with finite center and $K$ is its maximal compact subgroup, then Helgason \cite[Chap.~V]{Helgason3} proved that all 
$G$-invariant differential operators are solvable on Riemannian symmetric spaces $X$ of non-compact type.
The key tool of most of these works is the theory of harmonic analysis on the corresponding Lie group or on symmetric spaces, in particular the application of the Fourier transform.

However, what happens, if we genuinely extend this situation and consider systems of linear invariant differential operators, e.g. $D$ as a $q \times p$ -matrix ($q,p \in \N$) of such linear invariant differential operators? Is it still (locally) solvable?
In case of $\R^n$, the questions have been answered completely by Malgrange \cite{MalgrangeS1,MalgrangeS2}, Ehrenpreis (\cite{Ehrenpreis3} and \cite[Chap. 6]{Ehrenpreis4}) and Palamodov \cite{Palamodov1,Palamodov2}.
Already, here, the proof is much more complicated as for a single operator.

In Section~\ref{sect:StatConj}, we introduce a setting for the above problem for symmetric spaces $X$ of non-compact type in terms of a conjecture, see Conjecture~\ref{conj:1}.
Suitably interpreted, it also applies to the above example, with $X=\R^n$ with $G=\R^n$ and $K=\{0\}$. Indeed, we view an invariant system of invariant differential equations as an invariant differential operator between complex homogeneous vector bundles.
The conjecture involves a second invariant differential operator $\widetilde{D}$ satisfying $\widetilde{D} \circ D=0$ as an integrability condition.
An overview on previously known results covering the conjecture is given at the end of Section~\ref{sect:StatConj}.\\
In Section~\ref{sect:Solv}, we present a possible strategy to solve the conjecture.
The following diagram, which we try to explain in the sequel, pictures the strategy 
and also the `jungle' into which the reader is about to adventure.

\[
\begin{tikzcd}
C^{\infty}(X,\E_{\gamma}) 
\arrow[r, "D"] & 
\underset{\text{exact}}{C^{\infty}(X,\E_{\tau})}
\arrow[d, <->, double, "\text{dualization}"] 
\arrow[r, "\widetilde{D}"] & 
\underset{\text{closed range}}{C^{\infty}(X,\E_{\delta})}
\\
  C^{-\infty}_c(X,\E_{\tilde{\delta}}) 
\arrow[d, "\mathcal{F}_{\tilde{\delta}}"] \arrow[r, "\widetilde{D}^t"] & 
\underset{\text{exact}}{C^{-\infty}_c(X,\E_{\tilde{\tau}})}
\arrow[d, "\mathcal{F}_{\tilde{\tau}}"] 
\arrow[r, "D^t"] & 
\underset{\text{closed range}}{C^{-\infty}_c(X,\E_{\tilde{\gamma}})}
 \arrow[d, "\mathcal{F}_{\tilde{\gamma}}"]  \\
 PWS_{\tilde{\delta}}(\aL^*_\C \times K/M) 
\arrow[d, dashrightarrow, <->, ""]  \arrow[r, "Q"] &
\underset{\text{exact}}{ PWS_{\tilde{\tau}}(\aL^*_\C \times K/M) }
\arrow[d, dashrightarrow, ""] 
\arrow[r, "P"] 
\arrow[d, dashrightarrow, <->, ""] &
\underset{\text{closed range}}{ PWS_{\tilde{\gamma}}(\aL^*_\C \times K/M) }
\arrow[d, dashrightarrow, <->, ""]
&
\arrow[d, dashrightarrow, <->, ""]
(\textcolor{blue}{Level\; 2})\\
\prescript{}{\mu}{PWS}_{\tilde{\delta}}(\aL^*_\C) 
\arrow[r, "\prescript{}{\mu}{Q}"]  &  
\underset{\text{exact}}{\prescript{}{\mu}{PWS}_{\tilde{\tau}}(\aL^*_\C)}
\arrow[r, "\prescript{}{\mu}{P}"] 
 & 
\underset{\text{closed range}}{\prescript{}{\mu}{PWS}_{\tilde{\gamma}}(\aL^*_\C).}
& 
(\textcolor{blue}{Level\;3})
\end{tikzcd}
\]
Roughly speaking, in order to prove the exactness of the first sequence, we dualize.
Here dual bundles corresponding to dual representations are indicated by a tilde.
Then, as next step, we apply the Fourier transform for sections.
The image of the Fourier transform of the vector space $C^{-\infty}_c(X,\E_{\tilde{*}}), *=\delta,\tau,\gamma$, is equal to the Paley-Wiener-Schwartz space $PWS_{\tilde{*}}(\aL^*_\C \times K/M)$.
The precise description of $PWS_{\tilde{*}}(\aL^*_\C \times K/M)$ has been obtained in our paper \cite[Thm.~4]{PalmirottaPWS}.
This theorem has been derived from Delorme's Paley-Wiener theorem for $C^\infty_c(G)$ \cite{Delorme}.
The corresponding operators $Q$ and $P$ are given by multiplication endomorphism by valued polynomials on $\aL^*_\C$.
Nevertheless, the modified problem, which we call (\textcolor{blue}{Level 2}), is still too difficult to be solved in general. 
Therefore, we fix an additional $K$-type on the right. We refer this situation as (\textcolor{blue}{Level 3}).
In this framework, the problem becomes more tractable, in particular, we can use Hörmander's results on multiplication by polynomial matrices.
More precisely, our problem reduces to certain hypotheses (Hyp.~\ref{lem:PQ0L3}-\ref{thm:estimate}), which will turn out to be true at least for some groups $G$.
The final step would be to move back to (\textcolor{blue}{Level 2}) and take the inverse Fourier transform to obtain the solutions of the initial problem. The strategy is summarized in Thm.~\ref{thm:localsolv}, Prop.~\ref{prop:1} and Prop.~\ref{prop:2}.\\
In Section~\ref{sect:H2}, we establish the solvability for $G=\SL(2,\R)$ by using the very explicit description of $\prescript{}{\mu}{PWS}_*(\aL^*_\C)$ given in \cite{PalmirottaIntw}. 
The final result is given in Thm.~\ref{thm:new1}.
Partial results (Thm.~\ref{thm:new2} and Thm.~\ref{thm:Hyp1SL2C}) will be obtained for $G=\SL(2,\R)^d$ $(d \in \N)$ and $G=\SL(2,\C)$ in Section~\ref{sect:Beyond}.\\
We have already checked that the methods introduced in the present paper also lead to a complete resolution of the conjecture for $G=\SO(1,n), n$ arbitrary.
In contrast, as already the example $\SL(2,\R)^d, d>1$, shows, for higher rank symmetric spaces additional new ideas would be needed.\\

This work is part of the doctoral dissertation \cite{PalmirottaDiss} of the second author.

\section{The conjecture} \label{sect:StatConj}
\subsection{Invariant differential operators on sections of homogeneous vector bundles} \label{subsect:DO}
Let $G$ be a real connected semi-simple Lie group with finite center of non-compact type.
Let $K \subset G$ be a maximal compact subgroup. 
The quotient $X=G/K$ is a Riemannian symmetric space of non-compact type.
Let $\tau$ be a finite dimensional representation of $K$ on a complex vector space $E_\tau$.
Let $\E_\tau:=G \times_K E_\tau$ be the $G$-homogeneous vector bundle over $X$ induced by $E_\tau.$
The space of its smooth sections is denoted by $C^\infty(X,\E_\tau)$.
Note that $C^\infty(X,\E_\tau)$ carries a natural Fréchet topology and it is equipped with a smooth $G$-action. 
Moreover, we have the following isomorphism of $G$-modules:
$$C^\infty(X,\E_\tau) \cong C^\infty(G,E_\tau)^K \cong [C^{\infty}(G) \otimes E_\tau]^K.$$
Here, $W^K$ denotes the space of $K$-invariants of a $K$-representation $W.$\\
We also need the $G$-module of distribution sections of $\mathbb{E}_\tau$
$$C^{-\infty}(X,\E_\tau):= C^\infty_c(X,\E_{\tilde{\tau}})' \cong [C^{-\infty}(G) \otimes E_\tau]^K \supset C^\infty(X,E_\tau),$$
where $(\tilde{\tau},E_{\tilde{\tau}})$ is the dual representation of $(\tau, E_\tau).$
We now consider an additional $K$-representation $(\gamma, E_\gamma)$ and its associated homogeneous vector bundle $\E_\gamma$ over $X$.

\begin{defn}
A linear non-zero differential operator
$D: C^\infty(X,\E_\gamma) \rightarrow C^\infty(X,\E_\tau)$
between sections of homogeneous vector bundles is said to be $G$-invariant if
$$D(g \cdot f)= g\cdot (Df), \;\;\;\;\; \forall g \in G, f \in C^\infty(X,\E_\gamma).$$
\end{defn}
We denote by $\DD_{G}(\E_\gamma,\E_\tau)$ the vector space of all these $G$-invariant differential operators.\\
Let $D\in  \DD_{G}(\E_\gamma,\E_\tau)$. As any differential operator, it extends canonically to a differential operator between distribution sections
$$D:C^{-\infty}(X,\E_\gamma) \longrightarrow C^{-\infty}(X,\E_\tau).$$
This extension is again $G$-equivariant.
We look at distribution sections
$$C^{-\infty}_{\{0\}}(X,\E_*) \subset C^{-\infty}(X,\E_*), \qquad *=\gamma, \tau,$$
supported at the origin $o:=eK \in X.$
Let $\g$ (resp. $\kk$) be the Lie algebra of the Lie group of $G$ (resp. $K$) and $\U(\g)$ (resp. $\U(\kk)$) be the universal enveloping algebra of complexification of $\g$ (resp. $\kk$).
$C^{-\infty}_{\{0\}}(X,\E_*)$ carries compatible actions of $\U(\g)$ and $K$.
It consists of derivatives of (vector valued) delta distributions.
It follows that $C^{-\infty}_{\{0\}}(X,\E_*)$ is a finitely generated $\U(\g)$-module consisting only of $K$-finite elements.
Shortly, it is a finitely generated $(\g,K)$-module.
Moreover, it is canonically isomorphic to the $(\g,K)$-module
$\U(\g)\otimes_{\U(\kk)} E_*$, where the $\U(\g)$-action is given by left multiplication on the first factor and $K$ acts by $\mathrm{Ad} \otimes *$. More details are provided in \cite[Sect.~7]{PalmirottaPWS}.
Restriction from $C^{-\infty}(X,\E_\gamma)$ to $C^{-\infty}_{\{0\}}(X,\E_\gamma)$ now induces $(\g,K)$-homomorphisms
$D:C^{-\infty}_{\{0\}}(X,\E_\gamma) \longrightarrow C^{-\infty}_{\{0\}}(X,\E_\tau)$
and hence
\begin{equation} \label{eq:DOisom}
D: \U(\g)\otimes_{\U(\kk)} E_\gamma \longrightarrow \U(\g)\otimes_{\U(\kk)} E_\tau.
\end{equation}
For any $(\g,K)$-module $W$ we have a natural isomorphism
$$\Hom_{(\g,K)}(\U(\g) \otimes_{\U(\mathfrak{k})} E_*, W) \cong \Hom_K(E_*,W), \quad \phi \mapsto h_\phi, \quad h_\phi(v):=\phi(1 \otimes v).$$
In particular, we obtain an isomorphism
\begin{eqnarray} \label{eq:DhD}
    \DD_G(\E_\gamma, \E_\tau) &\cong& \Hom_K(E_\gamma, \U(\g) \otimes_{\U(\kk)} E_\tau) \nonumber \\
    D &\mapsto& h_D.
\end{eqnarray}
The injectivity of the linear map $D\mapsto h_D$ is obvious, for a discussion of surjectivity we refer again to \cite[Sect.~7]{PalmirottaPWS}.
We remark that \eqref{eq:DhD} gives an algebraic description of invariant differential operators which is closely related to the well-known isomorphism \cite{Reimann}
$$\DD_G(\E_\gamma, \E_\tau) \cong [\U(\g) \otimes_{\U(\kk)} \Hom(E_\gamma,E_\tau)]^K.$$
However, it is not trivial to write down directly a well-defined linear map from $[\U(\g) \otimes_{\U(\kk)} \Hom(E_\gamma,E_\tau)]^K$ to $\Hom_K(E_\gamma, \U(\g) \otimes_{\U(\kk)}  E_\tau)$ which gives a correct isomorphism, compare \cite[Prop.~1.5.]{PalmirottaDiss}.

\subsection{Statement of the conjecture} \label{subsect:Conj}
Let $D\in \DD_G(\E_\gamma,\E_\tau)$. 
We wish to find another bundle $\E_\delta$ and another operator $\widetilde{D} \in \DD_G(\E_\tau, \E_\delta)$ such that the equation $Df=g$ is solvable in $C^\infty(X,\E_{\gamma})$ if and only if $\widetilde{D}g=0$.

We look at \eqref{eq:DOisom} and consider the $(\g,K)$-submodule 
$\Ker\; D \subset \U(\g) \otimes_{\U(\kk)} E_\gamma$.
Since $\U(\g)$ is Noetherian \cite[0.6.1.]{Wallach}, $\Ker\; D$ is again finitely generated as $\U(\g)$-module. We choose a finite dimensional $K$-invariant generating subspace $E_0 \subset \Ker\; D$.
It carries a $K$-representation 
$\tau_0: K \rightarrow \mathrm{GL}(E_0).$
Then the embedding
$i: \E_0 \hookrightarrow \U(\g) \otimes_{\U(\kk)} E_\gamma$ belongs to $\Hom_K(E_0, \U(\g) \otimes_{\U(\kk)} E_\gamma)$.
By \eqref{eq:DhD}, the map $i$ determines an invariant differential operator
$D_0 \in \DD_G(\E_0,\E_\gamma)$ with $h_{D_0}=i.$
The image of the corresponding
$(\g,K)$-map
$$D_0: \U(\g) \otimes_{\U(\kk)}E_0 \longrightarrow \U(\g) \otimes_{\U(\kk)} E_\gamma$$
coincides with $\Ker\; D$.
In other words, the sequence
\begin{equation}\label{eq:DDO} 
\U(\g) \otimes_{\U(\kk)} E_0 \stackrel{D_0}{\longrightarrow} \U(\g) \otimes_{\U(\kk)} E_\gamma 
\stackrel{D}{\longrightarrow} \U(\g) \otimes_{\U(\kk)} E_\tau
\end{equation}
is exact, in particular 
$D \circ D_0=0. $
However, since we want the opposite order, we need to dualize.
The dual or `adjoint' of $D$ is an invariant differential operator
$$D^t: C^{\infty}(X,\E_{\tilde{\tau}}) \longrightarrow C^{\infty}(X,\E_{\tilde{\gamma}}).$$
We view $D^t$ as an operator 
$D^t: \U(\g) \otimes_{\U(\kk)} E_{\tilde{\tau}} \longrightarrow \U(\g) \otimes_{\U(\kk)} E_{\tilde{\gamma}}$
and
set $\widetilde{D}:=((D^t)_0)^t$, where $(D^t)_0$ is constructed as above with $\gamma, \tau$ replaced by $\tilde{\gamma}, \tilde{\tau}$.
$(D^t)_0$ corresponds to a finite-dimensional $K$-invariant generating subspace $F_0 \subset \Ker(D^t) \subset \U(\g) \otimes_{\U(\kk)} E_{\tilde{\tau}}$.
Set $\F:=\widetilde{\F_0}$.
Then $\widetilde{D} \in \DD_G(\E_\tau, \F)$ and we have  $(\widetilde{D} \circ D)^t=D^t \circ (D^t)_0=0$.
Thus,
$\widetilde{D} \circ D=0$ on $C^\infty(X, \E_\gamma)$, i.e. $\Impart(D) \subset \Ker(\widetilde{D})$, viewed as a subspace of $C^\infty(X,\E_\tau)$.\\

\begin{conj} \label{conj:1}
Let $D \in \DD_G(\E_\gamma,\E_\tau)$ and $\widetilde{D}\in \DD_G(\E_\tau,\F)$ be as above.\\
Then, the differential equation $Df=g$ is solvable in $C^\infty(X, \E_\gamma)$ for given $g \in C^\infty(X, \E_\tau)$ if and only if $\widetilde{D}g=0$.
In other words, the sequence
\begin{equation}\label{eq:exact}
C^\infty(X, \E_\gamma) \stackrel{D}{\longrightarrow} \underset{\text{exact}}{C^\infty(X, \E_\tau)} \stackrel{\widetilde{D}}{\longrightarrow} C^\infty(X, \F)
\end{equation}
 is exact in the middle, i.e. $\emph \Impart(D)= \emph \Ker(\widetilde{D})$.
\end{conj}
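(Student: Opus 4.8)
The plan is to follow the roadmap described by the three-level diagram in the introduction, reducing solvability on $X$ to a statement about multiplication by polynomial matrices on Paley--Wiener--Schwartz spaces. First I would dualize: since $C^\infty(X,\E_\tau)$ and the related section spaces are reflexive Fréchet spaces, the sequence \eqref{eq:exact} is exact in the middle if and only if the transposed sequence of compactly supported distribution sections
\[
C^{-\infty}_c(X,\E_{\tilde{\delta}}) \xrightarrow{\;\widetilde{D}^t\;} C^{-\infty}_c(X,\E_{\tilde{\tau}}) \xrightarrow{\;D^t\;} C^{-\infty}_c(X,\E_{\tilde{\gamma}})
\]
is exact in the middle \emph{and} $D^t$ has closed range (a Hahn--Banach / closed-range argument; one must be careful that the functional-analytic duality between ``exact in the middle'' upstairs and ``exact in the middle plus closed range'' downstairs really holds in this Fréchet/DF setting). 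By the very construction of $\widetilde{D}$, the kernel of $D^t$ acting on the $(\g,K)$-modules is generated by the image of $(D^t)_0$, so the transposed sequence is exact ``at the level of germs at the origin''; the task is to upgrade this to exactness of the global sequence of distribution sections.

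Next I would apply the Fourier transform $\mathcal{F}_{\tilde{*}}$ for distributional sections, whose image is the Paley--Wiener--Schwartz space $PWS_{\tilde{*}}(\aL^*_\C \times K/M)$ as in \cite[Thm.~4]{PalmirottaPWS}. Since $\mathcal{F}_{\tilde{*}}$ is a topological isomorphism onto its image and intertwines the differential operators $D^t,\widetilde{D}^t$ with multiplication operators $P,Q$ by holomorphic (indeed polynomial on $\aL^*_\C$, operator-valued in the $K/M$-variable) endomorphism-valued functions, the problem becomes: show that
\[
PWS_{\tilde{\delta}}(\aL^*_\C \times K/M) \xrightarrow{\;Q\;} PWS_{\tilde{\tau}}(\aL^*_\C \times K/M) \xrightarrow{\;P\;} PWS_{\tilde{\gamma}}(\aL^*_\C \times K/M)
\]
is exact in the middle and $P$ has closed range (Level 2). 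This is still too coarse, so I would then decompose under the action of $K$ on the right, fixing a $K$-type $\mu$, arriving at the spaces $\prescript{}{\mu}{PWS}_{\tilde{*}}(\aL^*_\C)$ and the induced operators $\prescript{}{\mu}{Q},\prescript{}{\mu}{P}$ (Level 3). On Level 3 one is genuinely in the realm of Hörmander's theory of division and closed range for systems $f \mapsto \prescript{}{\mu}{P} f$ where $\prescript{}{\mu}{P}$ is a matrix of holomorphic functions with polynomial growth; the needed input is packaged into Hypotheses~\ref{lem:PQ0L3}--\ref{thm:estimate}, which assert the algebraic exactness $\Ker\,\prescript{}{\mu}{P} = \Impart\,\prescript{}{\mu}{Q}$ together with the relevant Hörmander-type estimates guaranteeing that solutions can be chosen with controlled growth, hence staying in the Paley--Wiener--Schwartz class.

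The final step is to reassemble: from exactness and closed range on Level 3 for every $K$-type $\mu$, deduce the same on Level 2 by summing/taking an inductive limit over $\mu$ (here one must check that the growth and support estimates are uniform enough in $\mu$ to survive the limit — this is one of the delicate points), then transport back through $\mathcal{F}_{\tilde{*}}$ to the distribution-section sequence, and finally dualize once more to obtain exactness of \eqref{eq:exact}. I expect the main obstacle to be precisely the passage between Levels 2 and 3 (controlling the $\mu$-dependence) together with verifying the Hörmander-type estimates of Hyp.~\ref{thm:estimate} in concrete cases; the abstract reductions (dualization, Fourier transform, $K$-type decomposition) are formal, but establishing the quantitative division estimates for the polynomial matrices $\prescript{}{\mu}{P}$ is where the real work lies and is exactly why the conjecture is only proved for $\SL(2,\R)$ and a few other low-rank groups rather than in general. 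Accordingly, in what follows I will first set up the dualization and Fourier-transform reductions in general (Thm.~\ref{thm:localsolv}, Prop.~\ref{prop:1}, Prop.~\ref{prop:2}), isolating the needed hypotheses, and only afterwards verify those hypotheses for the specific groups at hand.
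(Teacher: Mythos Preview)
Your proposal is essentially the paper's own strategy: dualize, apply the Fourier transform to pass to Paley--Wiener--Schwartz spaces (Level~2), decompose into $K$-types (Level~3), invoke H\"ormander-type division estimates packaged as Hyp.~\ref{lem:PQ0L3}--\ref{thm:estimate}, and reassemble via Thm.~\ref{thm:localsolv}; you also correctly single out the $\mu$-uniformity (Hyp.~\ref{hyp:constants}) as the real obstruction. One small refinement: the duality step is not quite ``exact iff transposed exact plus closed range''---the paper treats density and closedness of $\Impart(D)$ separately, using that $\Impart(D)\subset\Ker(\widetilde{D})$ is dense iff $\Impart(\widetilde{D}^t)\subset\Ker(D^t)$ is weak-$*$ dense (Lem.~\ref{lem:densityseq}, Cor.~\ref{cor:densityseq}) and that $\Impart(D)$ is closed iff $\Impart(D^t)$ is weak-$*$ closed, so exactness of the transposed sequence is not strictly what is needed.
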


\noindent
Note that one implication is obvious, since $\widetilde{D} \circ D=0$ implies that 
$\widetilde{D}g=\widetilde{D} \circ Df=0.$

The following lemma implies in particular that the validity of Conj.~\ref{conj:1} is independent of the choice of $\widetilde{D}$, i.e. the choice of the generating subspace $F_0.$
Moreover, it says that $\widetilde{D}$ gives the strongest possible integrability condition for the equation $Df=g$ that can be formulated by invariant differential operators.
\begin{lem}
    Let $D \in \DD_G(\E_\gamma, \E_\tau)$ and $\widetilde{D}\in \DD_G(\E_\tau, \F)$ be as in Conj.~\ref{conj:1}.
    Let $D_1 \in \DD_G(\E_\tau, \E_\delta)$ for some $K$-representation $(\delta, E_\delta)$ be such that $D_1 \circ D=0.$
    Then there exists $P\in \DD_G(\F, \E_\delta)$ with $D_1 = P \circ \widetilde{D}.$
\end{lem}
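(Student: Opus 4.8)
The plan is to work entirely on the algebraic side, translating everything through the isomorphism \eqref{eq:DhD} and its dual version, and then invoke the universal property that defines $\widetilde D$. First I would dualize the hypothesis: from $D_1 \circ D = 0$ on $C^\infty$-sections we get $D^t \circ D_1^t = 0$ as a composition of $(\g,K)$-maps
$$
\U(\g) \otimes_{\U(\kk)} E_{\tilde\delta} \xrightarrow{\;D_1^t\;} \U(\g) \otimes_{\U(\kk)} E_{\tilde\tau} \xrightarrow{\;D^t\;} \U(\g) \otimes_{\U(\kk)} E_{\tilde\gamma}.
$$
Hence $\Impart(D_1^t) \subset \Ker(D^t)$. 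Recall that by construction $\widetilde D = ((D^t)_0)^t$, where $(D^t)_0$ is the operator whose image is exactly $\Ker(D^t)$, associated to the generating subspace $F_0 \subset \Ker(D^t)$. So the sequence
$$
\U(\g) \otimes_{\U(\kk)} F_0 \xrightarrow{\;(D^t)_0\;} \U(\g) \otimes_{\U(\kk)} E_{\tilde\tau} \xrightarrow{\;D^t\;} \U(\g) \otimes_{\U(\kk)} E_{\tilde\gamma}
$$
is exact at the middle, and $D_1^t$ factors through this middle image.

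The key step is then a lifting argument. Consider $h_{D_1^t} \in \Hom_K(E_{\tilde\delta}, \U(\g) \otimes_{\U(\kk)} E_{\tilde\tau})$, the $K$-map corresponding to $D_1^t$ under \eqref{eq:DhD}; its image lands in $\Ker(D^t) = \Impart((D^t)_0)$. I would like to lift $h_{D_1^t}$ through the surjection $(D^t)_0 : \U(\g) \otimes_{\U(\kk)} F_0 \twoheadrightarrow \Ker(D^t)$ to obtain a $K$-map $\psi: E_{\tilde\delta} \to \U(\g) \otimes_{\U(\kk)} F_0$ with $(D^t)_0 \circ \psi = h_{D_1^t}$. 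Such a lift exists because $E_{\tilde\delta}$ is a finite-dimensional (hence semisimple, and in particular projective in the category of $K$-modules) $K$-representation, and $(D^t)_0$ is $K$-equivariant and surjective onto $\Ker(D^t)$; so the surjection of $K$-modules $\U(\g)\otimes_{\U(\kk)} F_0 \to \Ker(D^t)$ splits over the finite-dimensional $K$-submodule generated by $h_{D_1^t}(E_{\tilde\delta})$, giving the desired $\psi \in \Hom_K(E_{\tilde\delta}, \U(\g)\otimes_{\U(\kk)} F_0)$. By \eqref{eq:DhD} applied with $(\gamma,\tau)$ replaced by $(\tilde\delta, \widetilde{F_0})$ — i.e. $\psi$ determines an element $P^t \in \DD_G(\E_{\tilde\delta}, \F)$ — wait, more carefully: $\psi$ corresponds to an invariant differential operator $\U(\g)\otimes_{\U(\kk)} E_{\tilde\delta} \to \U(\g)\otimes_{\U(\kk)} F_0$, call it $S$, with $(D^t)_0 \circ S = D_1^t$. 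Dualizing once more, $S^t \circ ((D^t)_0)^t = (D_1^t)^t$, that is $S^t \circ \widetilde D = D_1$. Setting $P := S^t \in \DD_G(\F, \E_\delta)$ gives exactly $D_1 = P \circ \widetilde D$, as required.

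The main subtlety — and the point that deserves care rather than difficulty — is the passage from equations on $C^\infty$-sections to the algebraic $(\g,K)$-module statements and back, i.e. checking that $D_1 \circ D = 0$ as differential operators is equivalent to $D^t \circ D_1^t = 0$ as maps of the modules $\U(\g)\otimes_{\U(\kk)} E_*$, and likewise that a factorization of algebraic maps dualizes to a factorization of differential operators. This rests on the faithfulness of the correspondence $D \mapsto h_D$ (its injectivity is noted in the excerpt) together with the fact that restriction to distributions supported at the origin is injective on the relevant spaces of invariant operators, and on the compatibility of $t$ (transpose) with composition, $(A \circ B)^t = B^t \circ A^t$, and with the passage to $(D^t)_0$. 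I expect the lifting step itself to be painless given semisimplicity of finite-dimensional $K$-modules; the only place to be attentive is to ensure the lift $\psi$ can be chosen $K$-equivariant and with image in a finite-dimensional (hence admissible) piece, but this is automatic since $h_{D_1^t}(E_{\tilde\delta})$ is finite-dimensional and $K$-stable, so it is covered by the image of a finite-dimensional $K$-stable subspace of $\U(\g)\otimes_{\U(\kk)} F_0$ on which $(D^t)_0$ restricts to a $K$-equivariant surjection that splits. No new hard analysis is needed: the statement is purely a consequence of the defining universal property of $\widetilde D$ as the transpose of the operator whose image is the full kernel of $D^t$.
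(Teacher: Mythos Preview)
Your proof is correct and follows essentially the same approach as the paper: dualize to get $\Impart(D_1^t)\subset\Ker(D^t)=\Impart(\widetilde D^t)$, produce a $K$-equivariant lift of $h_{D_1^t}$ through $\widetilde D^t=(D^t)_0$, interpret the lift as an operator $Q$ via \eqref{eq:DhD}, and set $P=Q^t$. The only cosmetic difference is that the paper obtains the lift by choosing a global $K$-invariant complement $W$ of $\Ker(\widetilde D^t)$ inside $\U(\g)\otimes_{\U(\kk)}F_0$ (using that this module is locally $K$-finite, hence $K$-semisimple), whereas you phrase the same step as projectivity of the finite-dimensional $K$-module $E_{\tilde\delta}$; these are equivalent formulations of the same semisimplicity argument.
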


\begin{proof}
    The property $D_1 \circ D=0$ implies $\Impart(D_1^t) \subset \Ker(D^t)$ for the operators
    $D_1^t: \U(\g) \otimes_{\U(\kk)} E_{\tilde{\delta}} \rightarrow \U(\g) \otimes_{\U(\kk)} E_{\tilde{\tau}}$
    and 
    $D^t: \U(\g) \otimes_{\U(\kk)} E_{\tilde{\tau}} \rightarrow \U(\g) \otimes_{\U(\kk)} E_{\tilde{\gamma}}$.
    Recall that $\Ker(D^t)$ coincides with the image of
    $$\widetilde{D}^t: \U(\g) \otimes_{\U(\kk)}F_0 \rightarrow \U(\g) \otimes_{\U(\kk)} E_{\tilde{\tau}}.$$
    We choose a $K$-invariant complement
    $W \subset \U(\g) \otimes_{\U(\kk)}F_0$ of $\Ker(\widetilde{D}^t)$.
    Sending $v\in E_{\tilde{\delta}}$ to the unique $w\in W$ with
    $D_1^t(1 \otimes v)=\widetilde{D}^t(w)$
    defines an element $h \in \Hom_K(E_{\tilde{\delta}}, \U(\g) \otimes_{\U(\kk)}F_0).$
    By \eqref{eq:DhD}, $h$ is of the form $h_Q$ for some $Q \in \DD_G(\E_{\tilde{\delta}}, \widetilde{\mathbb{F}}).$
    By construction $\widetilde{D}^t \circ Q=D_1^t.$
    Set $P:=Q^t,$ then $P \in \DD_G(\mathbb{F},\E_\delta)$ and $D_1=P \circ \widetilde{D}.$
\end{proof}

For many special cases, the conjecture is known to be true (but not in general):
\begin{itemize}
\item Consider $\E_p=\wedge^{p} T^*X$ ($p\in \N_0$) the exterior powers of the cotangent bundle $T^*X$, and \\$D=d_p: C^\infty(X,\E_p) \rightarrow C^\infty(X,\E_{p+1})$ the exterior differential. Then, $\tilde{D}=d_{p+1}$.
By the Poincaré-Lemma (e.g. see \cite[Sect.~4]{Bott}) the conjecture in this situation is true, even for any contractible manifold $X$.
\item Notice that if $\E_\gamma=\E_\tau=\C$ are the trivial one-dimensional vector bundles on $X$ and $D \neq 0$, then the transposed invariant differential operator $D^t$ is injective on $C_c^{- \infty}(X,\C)$.
This implies that $\widetilde{D}=0$, thus we have no `integrability' condition.
Helgason's result \cite[Chap. V]{Helgason3}, mentioned in the introduction, shows that the conjecture is true in this case.
\item Also for elliptic operators between sections of general bundles, we have $\widetilde{D}=0$.
In this case, the conjecture follows from a result by Malgrange \cite[p.~341]{Malgrange}, saying in particular that every elliptic operator with analytic coefficients on a non-compact analytic manifold is surjective.
\item Furthermore, the Euclidean analogue of the conjecture is true by the result of Ehrenpreis (\cite{Ehrenpreis3} and \cite[Chap.~6]{Ehrenpreis4}),
Malgrange \cite{MalgrangeS1,MalgrangeS2} and Palamodov \cite{Palamodov1,Palamodov2} mentioned in the introduction. 
One of the first written proofs appeared in Hörmander \cite[Thm.~7.6.13 and Thm.~7.6.14]{Hormander}.
Note that the first edition of Hörmander's book appeared already in 1966.
For this proof, Hörmander invented new $L^2$-methods.
The result in the Euclidean case is known as a part of the Ehrenpreis Fundamental Principle.
\item An extension of Ehrenpreis's fundamental principle to symmetric spaces is discussed by Oshima, Saburi and Wakayama \cite{Oshima}.
In particular, they annonced that the conjecture is true if
$\E_\gamma=\C^p, \E_\tau=\C^q$ and $\E_\delta=\C^r$, $p,q,r \in \N$, with trivial $K$-representations.
\item Another important result is a consequence of the theory of Kashiwara-Schmid \cite{Kashi,Kashiwara} on maximal globalizations of Harish-Chandra modules.
Let $D \in \DD_G(\E_\gamma,\E_\tau)$.
We look at the operator
$$D^t: \U(\g) \otimes_{\U(\kk)} E_{\tilde{\tau}} \longrightarrow \U(\g) \otimes_{\U(\kk)} E_{\tilde{\gamma}}.$$
If $D^t$ occurs in a projective resolution
\begin{equation} \label{eq:resolution}
    \dots \stackrel{P_{s+1}}{\rightarrow} \U(\g) \otimes_{\U(\kk)} E_{s+1} \stackrel{P_{s}}{\rightarrow} \U(\g) \otimes_{\U(\kk)} E_{s} \stackrel{P_{s-1}}{\rightarrow} \dots \stackrel{P_{0}}{\rightarrow}\; \U(\g) \otimes_{\U(\kk)} E_{0} \rightarrow W \rightarrow 0,
\end{equation}
i.e. $D^t=P_s$ for some $s\in \N_0$, of a Harish-Chandra module $W$ (e.g. an irreducible $(\g,K)$-module W), then the conjecture is true for $D$.
This result, in more abstract language, is announced in \cite{Kashi}.
The full proof appeared in \cite{Kashiwara} and is based on the theory of $\mathcal{D}$-modules.
It is one of the motivations of the present paper to find a `more elementary' proof of this result based on the Fourier transform.
Note also that not all invariant differential operators $D$ satisfy $D^t=P_s$ for some resolution \eqref{eq:resolution} as above.
\end{itemize}

\section{On solvability and general strategy} \label{sect:Solv}

\subsection{Topological Paley-Wiener-Schwartz theorems and intertwining conditions} \label{sect:PWS}
Let us fix a $G$-invariant Riemannian metric on $X$.
Let $(\tau,E_\tau)$ be a finite-dimensional representation of $K$. The corresponding closed ball of radius $r$ centered at the origin $o$ is defined by $\overline{B}_r(o)$.
We consider the space of compactly supported distributional sections 
$C^{- \infty}_c(X, \E_\tau) = \bigcup_{r \geq 0} C^{- \infty}_r(X, \E_\tau)$,
where 
$$C^{-\infty}_r(X, \E_\tau):=\{f \in C^{- \infty}(X, \E_\tau) \;|\; \text{supp}(f) \subset \overline{B}_r(o)\}.$$
In context of the Fourier transform, we refer to this situation as (\textcolor{blue}{Level 2}).
For a second finite-dimensional $K$-representation, we also consider the space of compactly supported $(\gamma, \tau)$-spherical distributions\\
$C^{- \infty}_c(G,\gamma,\tau)= \bigcup_{r \geq 0} C^{- \infty}_r(G,\gamma,\tau)$.
Here, 
\begin{eqnarray*}
C^{- \infty}_r(G,\gamma,\tau):=
\{S \in C^{-\infty}(G,\Hom(E_\gamma,E_\tau) \;|\;  
l_{k_1} r_{k_2} S=\tau(k_2)^{-1}S\gamma(k_1) \;\forall k_1,k_2 \in K,\\
\text{ and } \text{supp}(S) \subset p^{-1}(\overline{B}_r(o))\},
\end{eqnarray*}
where $l_{k_1}, r_{k_2}$ are the left and right regular representations, respectively, and 
$p: G \rightarrow X$ is the natural projection.
Note that $C^{- \infty}_r(G,\gamma,\tau) \cong \Hom_K(E_\gamma, C^{- \infty}_r(X, \E_\tau))$.
In particular, for irreducible $\gamma$, $C^{- \infty}_r(G,\gamma,\tau)$ is just the (infinite-dimensional) multiplicity space for the occurrence of $\gamma$ in the $K$-representation $C^{- \infty}_r(X, \E_\tau)$.
We refer to this situation as (\textcolor{blue}{Level 3}).
The missing (\textcolor{blue}{Level 1}) corresponds to $C^{\pm \infty}_c(G)$, where no $K$-representations are involved. It stands in the background of the theory (see \cite{PalmirottaPWS, PalmirottaIntw}) but will play no role in the present paper.

We fix an Iwasawa decomposition $G=KAN$. 
Let $M:=Z_K(A)$ be the centralizer of $A$ in $K$. 
Now we come to the Fourier transform. 

\begin{defn}[Fourier transform for sections of homogeneous vector bundles] \label{defn:FTsect}
Let $g=\kappa(g)a(g)n(g) \in KAN=G$ be the Iwasawa decomposition of $g \in G$.
We define the exponential function 
$$e^\tau_{\lambda,k}: G \longrightarrow \mathrm{End}(E_\tau), \quad
\lambda \in \aL^*_\C, k\in K,$$ 
by
$e^\tau_{\lambda,k}(g):=\tau(\kappa(g^{-1}k))^{-1} a(g^{-1}k)^{-(\lambda+\rho)} \in \emph \End(E_\tau),
$
where $\rho$ is the half sum of the positive roots of $(\g,\aL)$ counted with multiplicity.
\begin{itemize}
\item(\textcolor{blue}{Level 2}) 
For $T\in C^{-\infty}_c(X,\E_\tau)$, the Fourier transform is given by
$$\mathcal{F}_\tau T(\lambda,k):= \langle T, e^{\tau}_{\lambda, k} \rangle \in E_\tau, \;\;\; (\lambda, k) \in \aL^*_\C \times K/M.$$
\item (\textcolor{blue}{Level 3}) 
For $S \in C^{-\infty}_c(G,\gamma, \tau)$, the Fourier transform is defined by
$$ \prescript{}{\gamma}{\mathcal{F}}_\tau S(\lambda):= \langle S, e^\tau_{\lambda,1} \rangle \in \emph \Hom_M(\E_\gamma, \E_\tau), \lambda \in \aL^*_\C.$$
\end{itemize}
\end{defn}

\noindent
We fix a basis $Y_1, \dots,Y_n$ of $\kk$. Then $Y_\alpha=Y_{\alpha_1} \cdots Y_{\alpha_n},\alpha=(\alpha_1, \cdots, \alpha_n)$ a multi-index, form basis of $\U(\kk)$.
We also introduce
$$C^\infty(\aL^*_\C \times K/M, \E_{\tau|_M}) := \{ f: \aL^*_\C \times K \stackrel{C^\infty}{\longrightarrow} E_\tau \;|\; f(\lambda, km)= \tau(m)^{-1} f(\lambda,k) \}.$$

The Riemannian metric on $X$ induces Euclidean norms $|\cdot|$ on $\aL^*$ and $\aL^*_\C$.
\begin{defn}[Paley-Wiener-Schwartz space for sections in (\textcolor{blue}{Level 2}) and (\textcolor{blue}{Level 3}), \cite{PalmirottaPWS}, Def.~10] 
\label{def:PWSspace}
\noindent
\begin{itemize}
\item[(a)] For $r\geq 0$, let $PWS_{\tau,r}(\aL^*_\C \times K/M)$ be the space of sections
$\psi \in C^\infty(\aL^*_\C \times K/M, \E_{\tau|_M})$ such that:
\begin{itemize}
\item[$(2.i)$] The section $\psi$ is holomorphic in $\lambda \in \aL^*_\C$.
\item[$(2.iis)_r$] (growth condition) For all multi-indices $\alpha$, there exist $N \in \N_0$ and a positive constant $C_{r,N,\alpha}$ such that
$$\|l_{Y_\alpha} \psi(\lambda,k)\|_{E_\tau} \leq C_{r,N,\alpha} (1+|\lambda|^2)^{N+\frac{|\alpha|}{2}} e^{r|\emph \Repart(\lambda)|},\;\;\; k \in K,$$
where $\|\cdot\|_{E_\tau}$ is a $K$-invariant Euclidean norm on the finite-dimensional vector space $E_\tau$. 
\item[$(2.iii)$] The intertwining conditions in \cite[Thm.~2 (D.2)]{PalmirottaPWS} are satisfied.
\end{itemize}
We set $PWS_\tau(\aL^*_\C \times K/M) := \bigcup_{r \geq 0,N \in \N_0} PWS_{\tau,r,N}(\aL^*_\C \times K/M)$
equipped with its natural inductive limit topology.

\item[(b)] By considering an additional $K$-representation $\gamma$, let $\prescript{}{\gamma}{PWS}_{\tau,r}(\aL^*_\C)$ be the space of functions
$$\aL^*_\C \ni \lambda \mapsto \varphi(\lambda) \in \emph\Hom_M(E_\gamma,E_{\tau})$$
such that:
\begin{itemize}
\item[$(3.i)$] The function $\varphi$ is holomorphic in $\lambda \in \aL^*_\C$.
\item[$(3.iis)_r$] (growth condition) There exist $N\in \N_0$ and a positive constant $C_{r,N}$ such that
$$
\|\varphi(\lambda)\|_{\text{op}} \leq C_{r,N} (1+|\lambda|^2)^{N} e^{r|\emph \Repart(\lambda)|},
$$
where $\|\cdot\|_{\text{op}}$ denotes the operator norm on $\emph \Hom_M(E_\gamma,E_{\tau})$.
\item[$(3.iii)$] The intertwining conditions in \cite[Thm.~2 (D.3)]{PalmirottaPWS} are satisfied.
\end{itemize}
\end{itemize}
We set $\prescript{}{\gamma}{PWS}_{\tau}(\aL^*_\C) := \bigcup_{r \geq 0,N \in \N_0} \prescript{}{\gamma}{PWS}_{\tau,r,N}(\aL^*_\C)$
equipped with its natural inductive limit topology.
\end{defn}
We do not write the intertwining conditions explicitly here.
We only mention that each intertwining condition comes from a $G$-invariant subspace in a finite sum of (derived) principal series representations of $G$.
Such a subspace is called an intertwining datum.
The most important intertwining data are just proper submodules in a single reducible principal series representation and intertwining operators between two principal series representations.
Each intertwining datum gives a linear relation between the germs of $\psi$ (\textcolor{blue}{Level 2}) or $\phi$ (\textcolor{blue}{Level 3}) at finitely many points $\lambda_1, \dots, \lambda_s \in \aL^*_\C$ depending only at the values and possibly finitely many derivatives of $\psi, \varphi$ at $\lambda_1, \dots, \lambda_s$.

By considering the Knapp-Stein and Želobenko intertwining operators, we developed a criterion for  real rank one \cite[Thm.~2]{PalmirottaIntw} to check when a subset of these intertwining conditions is already sufficient in order to describe the Paley-Wiener-Schwartz space completely.
In particular, for $\SL(2,\R)^d$ ($d\in \N$) and $\SL(2,\C)$, we obtained explicitly the intertwining conditions in (\textcolor{blue}{Level~2}) and (\textcolor{blue}{Level 3}). These results will be exposed in the upcoming Sections~\ref{sect:H2} and \ref{sect:Beyond}.\\
For now, let us present the topological Paley-Wiener-Schwartz theorem, which has already been proved in \cite{PalmirottaPWS}.
We equip the vector spaces $C^{-\infty}_c(X,\E_\tau)$ as well as $C^{-\infty}_c(G,\gamma, \tau)$ with the strong dual topology.

\begin{thm}[Topological Paley-Wiener-Schwartz theorem, \cite{PalmirottaPWS}, Thm.~3] \label{thm:PWSsect}
\noindent
\begin{itemize}
\item[(a)] For each $r\geq 0$, the Fourier transform $\mathcal{F}_\tau$ is a linear
bijection between the two spaces $C^{-\infty}_r(X,\E_\tau)$ and the Paley-Wiener-Schwartz space $PWS_{\tau,r}(\aL^*_\C \times K/M)$.
Moreover, it is a linear topological isomorphism from $C^{-\infty}_c(X,\E_\tau)$ onto $PWS_{\tau}(\aL^*_\C \times K/M)$.

\item[(b)] Similarly, if we consider an additional finite-dimensional $K$-representation $(\gamma,E_\gamma)$ with associated homogeneous vector bundle $\E_\gamma$.
Then, the Fourier transform $\prescript{}{\gamma}{\mathcal{F}}_{\tau}$ is a linear
bijection between the two spaces
$C^{-\infty}_r(G,\gamma, \tau)$ and $\prescript{}{\gamma}{PWS}_{\tau,r}(\aL^*_\C)$, for each $r\geq 0$, and a linear topological isomorphism from $C_c^{-\infty}(G,\gamma, \tau)$ onto $\prescript{}{\gamma}{PWS}_{\tau}(\aL^*_\C)$. \QEDA
\end{itemize}
\end{thm}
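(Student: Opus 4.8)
The plan is to deduce both parts from a single Paley-Wiener-Schwartz statement on the group $G$ together with the usual $K$-type bookkeeping. First I would invoke the identifications $C^{-\infty}_c(X,\E_\tau)\cong[C^{-\infty}_c(G)\otimes E_\tau]^K$ and $C^{-\infty}_r(G,\gamma,\tau)\cong\Hom_K(E_\gamma,C^{-\infty}_r(X,\E_\tau))$ to see that (a) and (b) are two faces of the same assertion: (b) is the $\gamma$-isotypic refinement of (a), and (a) is recovered from (b) by running over $\widehat K$, the intertwining conditions $(2.iii)$ encoding the compatibility between the various $\gamma$-components (a single intertwining datum generally couples several $K$-types). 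It thus suffices to establish (b). For this I would present $\prescript{}{\gamma}{\mathcal F}_\tau$ as the restriction, to the $(\gamma,\tau)$-spherical subspace, of the operator-valued Fourier transform on $C^{-\infty}_c(G)$ dual to the Fourier transform on $C^\infty_c(G)$ whose image is computed by Delorme's Paley-Wiener theorem \cite{Delorme}; dualizing Delorme's theorem produces a topological Paley-Wiener-Schwartz theorem for $C^{-\infty}_c(G)$, and the present statement is its $K$-finite, $(\gamma,\tau)$-spherical projection.

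Necessity of the three defining conditions is the routine half. Holomorphy of $\lambda\mapsto\langle S,e^\tau_{\lambda,1}\rangle$ is immediate, since $\lambda\mapsto e^\tau_{\lambda,1}(g)$ is entire with locally uniform bounds for each fixed $g$ and $S$ has compact support, so one differentiates under the pairing. For the growth bound $(3.iis)_r$ one uses $\supp(S)\subset p^{-1}(\overline B_r(o))$: on this set $\|a(g^{-1}k)^{-(\lambda+\rho)}\|\le Ce^{r|\Repart(\lambda)|}$ uniformly in $k$, and pairing against a distribution of finite order $N$ costs at most the extra factor $(1+|\lambda|^2)^{N}$ coming from the $N$ derivatives transferred onto this kernel. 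The intertwining conditions $(3.iii)$ are forced by equivariance: every intertwining datum is a $G$-submodule of a finite sum of (possibly derived) principal series on which the relevant matrix coefficient against $S$ vanishes identically in $g$, and unwinding that vanishing at the finitely many parameters $\lambda_1,\dots,\lambda_s$ attached to the datum yields precisely the stated linear relation among the germs of $\prescript{}{\gamma}{\mathcal F}_\tau S$.

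The substantive point is sufficiency: given $\varphi\in\prescript{}{\gamma}{PWS}_{\tau,r}(\aL^*_\C)$, one must produce $S\in C^{-\infty}_r(G,\gamma,\tau)$ with $\prescript{}{\gamma}{\mathcal F}_\tau S=\varphi$. I would define $S$ by a wave-packet (inverse Fourier) integral of $\varphi$ against the dual exponential kernel over the contour $\Repart(\lambda)=0$, weighted by the Plancherel density. The growth estimate $(3.iis)_r$ makes this integral converge when tested against a smooth section, so $S$ is a well-defined distribution; holomorphy in $\lambda$ lets one shift the contour to $\Repart(\lambda)=t\xi$ for a suitable direction $\xi$, and as $t\to\infty$ the factor $e^{r|\Repart(\lambda)|}$ is beaten by the decay of the kernel whenever the point lies at distance more than $r$ from $o$, which is the classical Paley-Wiener/Phragmén-Lindelöf argument giving $\supp S\subset p^{-1}(\overline B_r(o))$. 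The role of the intertwining conditions here is twofold: they make the wave packet well defined, and, more importantly, they guarantee that $\varphi\mapsto S$ is inverse to $\prescript{}{\gamma}{\mathcal F}_\tau$, since otherwise the contour deformations and residue calculations that reconstruct $\varphi$ from $\prescript{}{\gamma}{\mathcal F}_\tau S$ would acquire spurious contributions at the reducibility points of the principal series; the vanishing of exactly those contributions is what each intertwining datum records. This is where Delorme's theorem — or, in real rank one, the explicit Knapp-Stein and Želobenko analysis of \cite{PalmirottaIntw} — does the real work, as it certifies that these are the only obstructions. I expect this to be the main obstacle of the whole argument.

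Finally, for the topological assertions I would fix $r$ and filter both sides by the order, respectively the polynomial degree, $N$: $C^{-\infty}_r(X,\E_\tau)$ is the inductive limit over $N$ of the spaces of sections of order $\le N$ supported in $\overline B_r(o)$, while $PWS_{\tau,r}(\aL^*_\C\times K/M)$ is the inductive limit over $N$ of the Fréchet spaces $PWS_{\tau,r,N}$. The map $\mathcal F_\tau$ restricts to a continuous bijection between the $N$-th levels, which in this nuclear setting is a topological isomorphism by the open mapping theorem; passing to the inductive limit over $N$, and then over $r$, gives the asserted topological isomorphism onto $PWS_\tau(\aL^*_\C\times K/M)$, and case (b) is handled identically. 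Apart from the sufficiency half of (b) and the bookkeeping of the intertwining data, everything is either formal or a routine vector-valued upgrade of the scalar Paley-Wiener-Schwartz machinery.
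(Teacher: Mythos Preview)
This theorem is not proved in the present paper: it is quoted from \cite{PalmirottaPWS} (their Thm.~3), as the citation in the heading and the terminal $\square$ with no accompanying argument indicate. There is therefore no proof here to compare your attempt against. What the paper does record, both in the introduction and in the paragraph preceding Def.~\ref{def:PWSspace}, is that the result in \cite{PalmirottaPWS} was \emph{derived from} Delorme's Paley--Wiener theorem for $C^\infty_c(G)$ \cite{Delorme}.

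Your opening paragraph lines up with exactly that: dualize Delorme to get a Paley--Wiener--Schwartz description of $C^{-\infty}_c(G)$, then pass to the $(\gamma,\tau)$-spherical component. That is, as far as one can infer from this paper, the route taken in \cite{PalmirottaPWS}. The wave-packet/contour-shift argument you sketch afterwards is a plausible alternative (closer to the classical rank-one or scalar proofs), but it is not what the paper indicates was done, and making it rigorous in general --- in particular showing that the intertwining conditions $(3.iii)$ kill precisely the residue contributions at reducibility points --- would essentially amount to reproving Delorme's theorem rather than invoking it. One small caution on your reduction: recovering (a) from (b) ``by running over $\widehat K$'' is not purely formal, since elements of $C^{-\infty}_c(X,\E_\tau)$ are not $K$-finite; one needs a Peter--Weyl convergence argument of the kind the paper carries out later in Lem.~\ref{lem:absconv} and the proof of Thm.~\ref{thm:Hyp4.4}.
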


Note that $C^{-\infty}_{r=0} (G, \gamma, \tau) \cong \DD_G(\E_\gamma, \E_\tau)$.
Thus, we can apply the Fourier transform to invariant differential operators. Then, Thm.~\ref{thm:PWSsect} implies \cite[Prop.~9]{PalmirottaPWS}:
\begin{eqnarray} \label{eq:PolyPWS}
\prescript{}{\gamma}{\mathcal{F}}_\tau(\DD_G(\E_\gamma,\E_\tau)) 
&\cong& \prescript{}{\gamma}{PWS}_{\tau,0}(\aL^*_\C) \nonumber \\
&=&\{P \in \Pol(\aL^*_\C, \Hom_M(E_\gamma, E_\tau)) \;|\; P \text{ satisfies } (3.iii) \text{ of Def.~\ref{def:PWSspace}}\}.
\end{eqnarray}
In addition, we have the following result.
\begin{prop}[\cite{PalmirottaPWS}, Prop.~10] \label{prop:1}
Let $D \in \DD_G(\E_\gamma, \E_\tau)$ be an invariant linear differential operator.
For $f \in C^{- \infty}_c(X,\E_{\gamma})$ we have that
\begin{equation*} \label{eq:Thm1}
\mathcal{F}_\tau (Df)(\lambda,k)= \prescript{}{\gamma}{\mathcal{F}}_\tau D(\lambda) \mathcal{F}_{{\gamma}} f(\lambda,k), \;\;\;\;\;  \lambda \in \aL^*_\C, k\in K,
\end{equation*}
where $\prescript{}{\gamma}{\mathcal{F}}_\tau D \in \emph \Pol(\aL^*_\C, \emph \Hom_M(E_\gamma, E_\tau))$ is a polynomial in $\lambda \in \aL^*_\C$ with values in $ \emph \Hom_M(E_\gamma, E_\tau)$. \QEDA
\end{prop}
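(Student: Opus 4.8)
The plan is to compute $\mathcal{F}_\tau(Df)$ by integrating the differential operator by parts onto the exponential kernel and then recognising the surviving $\lambda$-dependent factor as $\prescript{}{\gamma}{\mathcal{F}}_\tau D$. Both sides of the asserted identity depend continuously on $f$ by Thm.~\ref{thm:PWSsect}, and $C^\infty_c(X,\E_\gamma)$ is dense in $C^{-\infty}_c(X,\E_\gamma)$, so it suffices to treat smooth $f$, for which (with a suitable normalisation constant $c>0$) $\mathcal{F}_\gamma f(\lambda,k)=c\int_G e^\gamma_{\lambda,k}(g)f(g)\,dg$ and $\mathcal{F}_\tau(Df)(\lambda,k)=c\int_G e^\tau_{\lambda,k}(g)(Df)(g)\,dg$ are genuine integrals (the integrands descend to $X$). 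Using \eqref{eq:DhD} and the isomorphism $\DD_G(\E_\gamma,\E_\tau)\cong[\U(\g)\otimes_{\U(\kk)}\Hom(E_\gamma,E_\tau)]^K$, I would write $Df$, viewed as a function on $G$, as $(Df)(g)=\sum_i A_i\,(u_i^L f)(g)$ with finitely many $u_i\in\U(\g)$, $A_i\in\Hom(E_\gamma,E_\tau)$, the sum $\sum_i u_i\otimes A_i$ being $K$-invariant (only this combination descends to a section of $\E_\tau$). Since $G$ is unimodular, $\int_G\psi\,(u^L\phi)\,dg=\int_G\bigl((u^\vee)^L\psi\bigr)\phi\,dg$ for the principal anti-automorphism $u\mapsto u^\vee$, and applying this entrywise gives
\begin{equation*}
\mathcal{F}_\tau(Df)(\lambda,k)=c\int_G\Bigl(\sum_i\bigl((u_i^\vee)^L e^\tau_{\lambda,k}\bigr)(g)\,A_i\Bigr)f(g)\,dg .
\end{equation*}

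The heart of the matter is then the pointwise identity
\begin{equation*}
\sum_i\bigl((u_i^\vee)^L e^\tau_{\lambda,k}\bigr)(g)\,A_i=\prescript{}{\gamma}{\mathcal{F}}_\tau D(\lambda)\;e^\gamma_{\lambda,k}(g),\qquad g\in G,
\end{equation*}
for a polynomial map $\prescript{}{\gamma}{\mathcal{F}}_\tau D\colon\aL^*_\C\to\Hom_M(E_\gamma,E_\tau)$. For a single $v\in\g$, differentiating $\kappa(g^{-1}k)$ and $a(g^{-1}k)^{-(\lambda+\rho)}$ in $g$ via the Iwasawa projections $\g=\kk\oplus\aL\oplus\mathfrak{n}$ yields
\begin{equation*}
\bigl(v^L e^\tau_{\lambda,k}\bigr)(g)=\Bigl[d\tau\bigl(\mathrm{proj}_{\kk}\Ad(\kappa(g^{-1}k)^{-1})v\bigr)+\bigl\langle\lambda+\rho,\ \mathrm{proj}_{\aL}\Ad(\kappa(g^{-1}k)^{-1})v\bigr\rangle\Bigr]\,e^\tau_{\lambda,k}(g),
\end{equation*}
a coefficient affine in $\lambda$; iterating over $\U(\g)$ gives coefficients that are polynomial in $\lambda$ but still depend on $g$ through $\kappa(g^{-1}k)$. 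The crucial point is that the $K$-invariance of $\sum_i u_i\otimes A_i$ forces precisely this residual $g$-dependence to cancel in the combination $\sum_i(\,\cdot\,)A_i$, leaving a coefficient that is constant in $g$, polynomial in $\lambda$, and $M$-equivariant; evaluating at $g=e$, $k=1$ identifies it with $\langle S_D,e^\tau_{\lambda,1}\rangle$ from Def.~\ref{defn:FTsect}, where $S_D\in C^{-\infty}_{r=0}(G,\gamma,\tau)$ is the germ of $D$, hence with the polynomial in \eqref{eq:PolyPWS} since $S_D$ has support radius $0$. Conceptually, this identity is nothing but the statement that the sectional Fourier transform intertwines the $G$-action on sections with the principal-series action at parameter $\lambda$, under which an origin-supported $D$ becomes a polynomial family of finite-dimensional $M$-maps $E_\gamma\to E_\tau$. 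Granting it, the factor $\prescript{}{\gamma}{\mathcal{F}}_\tau D(\lambda)$ pulls out of the integral and there remains $c\int_G e^\gamma_{\lambda,k}(g)f(g)\,dg=\mathcal{F}_\gamma f(\lambda,k)$; the general distributional case follows by density.

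I expect the only real obstacle to be this cancellation: proving that distributing $u_i^\vee$ over the kernel and recombining via the $K$-invariant $A_i$ kills the $\kappa(g^{-1}k)$-dependence and produces a genuine element of $\Pol(\aL^*_\C,\Hom_M(E_\gamma,E_\tau))$. This is best organised at the level of the $(\g,K)$-modules $\U(\g)\otimes_{\U(\kk)}E_*$ and their natural pairings with principal series, rather than in explicit Iwasawa coordinates. The remaining points are routine: the integration by parts (work on $G$, keeping the $K$-invariant combination intact, and descend to $X$ only at the end), its distributional version (harmless since all supports are compact), and the continuity/density reduction via Thm.~\ref{thm:PWSsect}.
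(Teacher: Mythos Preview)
The paper does not prove this proposition at all: it is quoted verbatim from \cite[Prop.~10]{PalmirottaPWS} and marked with \QEDA, so there is no in-paper argument to compare against. Your sketch is therefore not redundant with anything here, and its overall shape---reduce to smooth $f$, move $D$ onto the exponential kernel by duality, identify the resulting factor with $\prescript{}{\gamma}{\mathcal{F}}_\tau D(\lambda)$---is the standard route and is correct in outline.

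Two comments on execution. First, rather than integrating by parts term-by-term in a presentation $\sum_i u_i\otimes A_i$ and then arguing that the $\kappa(g^{-1}k)$-dependence cancels, it is cleaner to use directly that $\langle Df,e^\tau_{\lambda,k}\rangle=\langle f,D^t e^\tau_{\lambda,k}\rangle$ and to observe that for each fixed $k$ the map $g\mapsto e^\tau_{\lambda,k}(g)$, viewed column-wise, lies in the principal series $K$-type space on which the invariant operator $D^t$ acts by a single $M$-equivariant endomorphism depending polynomially on $\lambda$ (this is exactly the Frobenius reciprocity interpretation you allude to in your last paragraph, and it is how \cite{PalmirottaPWS} organises the computation). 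This bypasses the explicit Iwasawa differentiation and the cancellation argument entirely. Second, your claim that ``$K$-invariance of $\sum_i u_i\otimes A_i$ forces the residual $g$-dependence to cancel'' is morally right but, as you yourself flag, is precisely the nontrivial content; if you want to carry out your version, the cleanest way is to note that both sides of your pointwise identity are left-$G$-equivariant in the appropriate sense, hence are determined by their value at $g=e$, which reduces the identity to the very definition of $\prescript{}{\gamma}{\mathcal{F}}_\tau D(\lambda)=\langle D,e^\tau_{\lambda,1}\rangle$ via \eqref{eq:PolyPWS}.
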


\subsection{Estimates for systems of polynomials equations} \label{sect:Estimate}

The initial goal is to show the exactness in the middle of the sequence \eqref{eq:exact}. 
By taking the topological dual of \eqref{eq:exact}, we are dealing with compactly supported distributional sections $C^{-\infty}_c(X,\E_{\tilde{*}})$ for $*\in \{\delta, \tau, \gamma\}$ in the strong topology.
The strategy is to apply the Fourier transform and the topological Paley-Wiener-Schwartz Thm.~\ref{thm:PWSsect} for sections of homogeneous vector bundles to prove the exactness of the dual sequence in the middle as well as the closed range of our transposed invariant differential operators $D^t \in \DD_G(\E_{\tilde{\tau}},\E_{\tilde{\gamma}})$.
Our strategy could be visualized as follows:
\begin{equation} \label{diag:Level2}
\begin{tikzcd}
C^{\infty}(X,\E_{\gamma}) 
\arrow[r, "D"] & 
\underset{\text{exact}}{C^{\infty}(X,\E_{\tau})}
\arrow[d,<->, double, " "] 
\arrow[r, "\widetilde{D}"] & 
\underset{\text{closed range}}{C^{\infty}(X,\E_{\delta})}
\\
C^{-\infty}_c(X,\E_{\tilde{\delta}}) 
\arrow[d, "\mathcal{F}_{\tilde{\delta}}"] \arrow[r, "\widetilde{D}^t"] & 
\underset{\text{exact}}{C^{-\infty}_c(X,\E_{\tilde{\tau}})}
\arrow[d, "\mathcal{F}_{\tilde{\tau}}"] 
\arrow[r, "D^t"] & 
\underset{\text{closed range}}{C^{-\infty}_c(X,\E_{\tilde{\gamma}})}
 \arrow[d, "\mathcal{F}_{\tilde{\gamma}}"]  \\
 PWS_{\tilde{\delta}}(\aL^*_\C \times K/M) 
\arrow[r, "Q"] & 
\underset{\text{exact}}{PWS_{\tilde{\tau}}(\aL^*_\C \times K/M)}
\arrow[r, "P"] & 
\underset{\text{closed range}}{PWS_{\tilde{\gamma}}(\aL^*_\C \times K/M)}.
\end{tikzcd}
\end{equation}
Moreover, in \eqref{eq:PolyPWS}, we have seen that the Fourier transform of an invariant differential operator, which we denote by $Q$ respectively $P$, is a polynomial in $\lambda\in \aL^*_\C$ with values in the corresponding homomorphism space.
Therefore, we can reformulate the initial Conj.~\ref{conj:1} in terms of action of $Q$ on $PWS_{\tilde{\tau}}(\aL^*_\C \times K/M)$ and action of $P$ on $PWS_{\tilde{\gamma}}(\aL^*_\C \times K/M)$, respectively. \\
However, the problem remains still difficult.
The idea is to get rid of the $K$-variable by fixing an additional irreducible $K$-representation $(\mu,E_\mu)$ on the left while on the right a $K$-representation is given by the bundle $\E_{\tilde{*}} \rightarrow X$, $*\in \{\delta,\gamma,\tau\}$.
In terms of our framework of the previous subsection, we moved from (\textcolor{blue}{Level~2}) to (\textcolor{blue}{Level 3}).
In (\textcolor{blue}{Level 3}), Hörmander's results and estimates
\cite[Thm. 7.6.11. and Cor. 7.6.12.]{Hormander} become applicable.
Thus, the plan is first to solve the main problem in (\textcolor{blue}{Level 3}), as illustrated by the following diagram:
\[
\begin{tikzcd}
PWS_{\tilde{\delta}}(\aL^*_\C \times K/M) 
\arrow[d, dashrightarrow,<->, ""]  \arrow[r, "Q"] &
\underset{\text{exact}}{ PWS_{\tilde{\tau}}(\aL^*_\C \times K/M) }
\arrow[r, "P"] 
\arrow[d, dashrightarrow, <->, ""] &
\underset{\text{closed range}}{ PWS_{\tilde{\gamma}}(\aL^*_\C \times K/M) }
\arrow[d, dashrightarrow, <->, ""]
&
\arrow[d, dashrightarrow, <->, ""]
(\textcolor{blue}{Level\; 2})\\
\prescript{}{\mu}{PWS}_{\tilde{\delta}}(\aL^*_\C) 
\arrow[r, "Q"]  &  
\underset{\text{exact}}{\prescript{}{\mu}{PWS}_{\tilde{\tau}}(\aL^*_\C)}
\arrow[r, "P"] 
 & 
\underset{\text{closed range}}{\prescript{}{\mu}{PWS}_{\tilde{\gamma}}(\aL^*_\C).}
& 
(\textcolor{blue}{Level\;3})
\end{tikzcd}
\]

Let $\prescript{}{\mu}{PWS}_{*,H}(\aL^*_\C) := \{ \varphi \in \Hol(\aL^*_\C, \Hom_M(\E_\mu,\E_*)) \;|\; \varphi$ satisfies
 $(3.iii)$ in Def.~\ref{def:PWSspace}$\}$.
Note that $\prescript{}{\mu}{PWS}_*(\aL^*_\C) \subset \prescript{}{\mu}{PWS}_{*,H}(\aL^*_\C)$.
The analog of Hörmander's result \cite[Lem. 7.6.5]{Hormander} for the Euclidean case, is formulated as the following hypothesis.

\begin{hyp} \label{lem:PQ0L3}
With the notations above, let $P \in \prescript{}{\tilde{\tau}}{PWS}_{\tilde{\gamma},0}(\aL^*_\C)$ and $Q \in \prescript{}{\tilde{\delta}}{PWS}_{\tilde{\tau},0}(\aL^*_\C)$ 
 be the Fourier transforms of the invariant differential operators $D^t \in  \DD(\E_{\tilde{\tau}},\E_{\tilde{\gamma}})$ and $\widetilde{D}^t \in  \DD(\E_{\tilde{\delta}},\E_{\tilde{\tau}}),$ respectively.
Let $f \in \prescript{}{\mu}{PWS}_{\tilde{\tau},H}(\aL_\C^*).$\\
Then, there exists
$g \in \prescript{}{\mu}{PWS}_{\tilde{\delta},H}(\aL_\C^*)$
such that 
$f=Qg$
if and only if
$Pf=0$.
In other words, we have $$\emph \Impart(Q)= \emph \Ker(P) \text{ in } \prescript{}{\mu}{PWS}_{\tilde{\tau},H}(\aL_\C^*).$$
\end{hyp}

Now we want to solve the equation $Pv=w$ in $\prescript{}{\mu}{PWS}_{\tilde{\tau}}(\aL^*_\C)$ with estimates.
For $\varphi \in \prescript{}{\mu}{PWS}_{\tilde{\tau},H}(\aL^*_\C)$, we set
$\|\varphi\|_{r,N}:= \sup_{\lambda \in \aL_\C^*} \Big\{(1+|\lambda|^2)^{-N} e^{-r| \Repart(\lambda)|} \|\varphi\|_{\text{op}} \Big\} \in [0, \infty]$.

\begin{hyp}\label{thm:estimateL3}
Let $P \in \prescript{}{\tilde{\tau}}{PWS}_{\tilde{\gamma},0}(\aL^*_\C)$.
There exist constants $M \in \N_0$ and $C_{r,N}$ 
so that for each function
$\prescript{}{\mu}{u} \in \prescript{}{\mu}{PWS}_{\tilde{\tau},H}(\aL^*_\C)$ with
$\|P\prescript{}{\mu}{u}\|_{r,N} < \infty$, one can find a function $ \prescript{}{\mu}{v} \in \prescript{}{\mu}{PWS}_{\tilde{\tau}}(\aL^*_\C)$ satisfying
\begin{itemize}
\item[(i)] $P\prescript{}{\mu}{u}=P\prescript{}{\mu}{v}$ and
\item[(ii)]
$\|\prescript{}{\mu}{v}\|_{r,N+M} \leq C_{r,N} \|P\prescript{}{\mu}{u}\|_{r,N}.$
\end{itemize}
\end{hyp}

Ideally, we would like to have the constants more or less independent on the $K$-type $\mu$.

\begin{hyp} \label{hyp:constants}
With the notations of Hyp.~\ref{thm:estimateL3}, the constant $M$ can be chosen uniform for all $K$-types $\mu$ and $C_{r,N}$ is of at most polynomial growth in $|\mu| \in [0,\infty)$, the length of the highest weight of $\mu$, with polynomial degree independent of $N$.
\end{hyp}

Now we want to formulate the hypothesis in (\textcolor{blue}{Level 2}) analogous to Hyp.~\ref{thm:estimateL3} in (\textcolor{blue}{Level 3}).
Let $PWS_{\tilde{\tau},H}(\aL^*_\C \times K/M):=\{ \psi \in C^\infty(\aL^*_\C \times K/M,\E_{\tilde{\tau}|_M}) \;|\; \psi$ satisfies $(2.iii)$ in Def.~\ref{def:PWSspace}$\}$.
For $\psi \in PWS_{\tilde{\tau},H}(\aL^*_\C \times K/M)$ we define
$$\| \psi\|_{r,n, \beta} :=\sup_{\lambda \in \aL^*_\C, k \in K} \Big\{(1+|\lambda|^2)^{-\big(N+\frac{|\beta|}{2}\big)} e^{-r|\Repart(\lambda)|} |l_{Y_\beta}\varphi(\lambda,k)| \Big\}  \in [0, \infty], \;\;\; r \geq 0, N \in \N_0, \beta \text{ multi-index}.$$
Here and in the following, we denote the norm $\|\cdot\|_{E_{\tilde{\tau}}}$ simply by $|\cdot|$.

\begin{hyp}\label{thm:estimate}
Let $P \in \prescript{}{\tilde{\tau}}{PWS}_{\tilde{\gamma},0}(\aL^*_\C)$.
There exist constants $M \in \N_0$, 
$k \in \N_0$ and $C_{r,N, \alpha}$
so that for each function
$u\in PWS_{\tilde{\tau},H}(\aL^*_\C \times K/M)$ with $\|Pu\|_{r,N,\beta} < \infty$ for all  multi-indices $\beta$, 
one can find a function $v\in PWS_{\tilde{\tau}}(\aL^*_\C \times K/M)$ satisfying
\begin{itemize}
\item[(i)] $Pu=Pv$ and
\item[(ii)]
$\|v\|_{r,N+M,\alpha} \leq C_{r,N, \alpha} \sum_{|\beta| \leq |\alpha|+k} \|Pu\|_{r,N,\beta}$ for all multi-indices $\alpha.$
\end{itemize}
\end{hyp}

\noindent
Now we can move back from (\textcolor{blue}{Level 3}) to  (\textcolor{blue}{Level 2}).

\begin{thm} \label{thm:Hyp4.4}
Assume that Hyp.~\ref{thm:estimateL3} and Hyp.~\ref{hyp:constants} are satisfied.
Then, Hyp.~\ref{thm:estimate} holds true.
\end{thm}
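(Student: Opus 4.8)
The plan is to deduce the Level 2 estimate (Hyp.~\ref{thm:estimate}) from its Level 3 counterpart (Hyp.~\ref{thm:estimateL3}) by decomposing the $K$-variable into $K$-types, applying the Level 3 estimate to each isotypic component with the uniform constants furnished by Hyp.~\ref{hyp:constants}, and then reassembling the pieces into a smooth section over $K/M$ while controlling the derivatives $l_{Y_\beta}$. Concretely, for $\mu \in \widehat{K}$ let $p_\mu$ denote the projection of $C^\infty(\aL^*_\C \times K/M, \E_{\tilde\tau|_M})$ onto the $\mu$-isotypic component for the left regular $K$-action; since $K$ is compact this is given by convolution against (a multiple of) the character $\overline{\chi_\mu}$, and the component $u_\mu := p_\mu u$ may be identified with an element of $\prescript{}{\mu}{PWS}_{\tilde\tau, H}(\aL^*_\C) \otimes E_\mu$ (equivalently with a finite family indexed by a basis of $\Hom_M(E_\mu, E_{\tilde\tau})\otimes E_\mu$), on which $P$ acts by the same multiplication operator. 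The intertwining conditions $(2.iii)$ are $K$-equivariant in nature, so they pass to each $u_\mu$, and $(2.iii)$ for $u$ is equivalent to $(3.iii)$ for all the $u_\mu$; this is the point where the descriptions in \cite[Thm.~2]{PalmirottaPWS} are needed.

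First I would record the elementary harmonic-analysis facts: (a) $u = \sum_\mu u_\mu$ with convergence in $C^\infty$; (b) a Sobolev-type inequality on $K$ bounding the sup-norm of $l_{Y_\alpha} u$ by a finite sum of $L^2$-norms of $l_{Y_\beta} u$ with $|\beta| \le |\alpha| + \dim K$ (or any fixed $k$ exceeding $\tfrac12\dim K$), hence by $\big(\sum_\mu \|l_{Y_\beta} u_\mu\|_{L^2}^2\big)^{1/2}$; and (c) on the $\mu$-isotypic component each $l_{Y_\beta}$ is comparable to multiplication by a polynomial in the Casimir eigenvalue, so $\|l_{Y_\beta} u_\mu\| \lesssim (1+|\mu|)^{|\beta|}\|u_\mu\|$, with $|\mu|$ the length of the highest weight. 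Combining (b) and (c), the norm $\|v\|_{r,N,\alpha}$ of a candidate solution is controlled by $\big(\sum_\mu (1+|\mu|)^{2(|\alpha|+k)} \|v_\mu\|_{r,N}^2\big)^{1/2}$ up to constants, and conversely $\|Pu\|_{r,N,\beta}$ dominates (a weighted $\ell^2$-sum of) the individual Level 3 norms $\|P u_\mu\|_{r,N}$.

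Then the construction is: apply Hyp.~\ref{thm:estimateL3} to each $u_\mu$ to get $v_\mu \in \prescript{}{\mu}{PWS}_{\tilde\tau}(\aL^*_\C)$ with $Pv_\mu = Pu_\mu$ and $\|v_\mu\|_{r,N+M} \le C_{r,N}(\mu)\|Pu_\mu\|_{r,N}$, where by Hyp.~\ref{hyp:constants} the exponent $M$ is $\mu$-independent and $C_{r,N}(\mu) \le C_{r,N}(1+|\mu|)^{D}$ for some $D$ independent of $N$. Set $v := \sum_\mu v_\mu$. The three things to check are: $v$ lies in $PWS_{\tilde\tau}(\aL^*_\C \times K/M)$ — here $(2.i)$ and $(2.iis)_r$ for $v$ follow from the weighted $\ell^2$-summability just discussed once one absorbs the polynomial factors $(1+|\mu|)^{D + |\alpha| + k}$ into extra derivatives of $Pu$ (i.e. into the sum $\sum_{|\beta| \le |\alpha|+k}$ with $k$ enlarged to $k + D + \dim K$), and $(2.iii)$ for $v$ follows because each $v_\mu$ satisfies $(3.iii)$ and $(3.iii)$-for-all-$\mu$ reassembles to $(2.iii)$; $Pv = Pu$, which is immediate componentwise; and the estimate (ii), which is exactly the weighted-$\ell^2$ bookkeeping above with $M$ the uniform exponent and the new $k$. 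One must also note that since each individual $u$ has compact-in-$r$ support type, only finitely many $r$'s and a single $r$ suffice, and the hypothesis $\|Pu\|_{r,N,\beta} < \infty$ for all $\beta$ guarantees every term on the right is finite.

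The main obstacle I anticipate is the convergence of $v = \sum_\mu v_\mu$ as a \emph{smooth} function on $K/M$ holomorphic in $\lambda$ with the correct global bound: Hyp.~\ref{thm:estimateL3} controls each $v_\mu$ only in the single norm $\|\cdot\|_{r,N+M}$, so to sum over the infinitely many $\mu$ one needs the $\|Pu_\mu\|_{r,N}$ to decay fast enough in $|\mu|$ to beat the polynomial factor $(1+|\mu|)^{D}$ from Hyp.~\ref{hyp:constants} \emph{and} the extra factors $(1+|\mu|)^{|\alpha|+k}$ from the Sobolev step — and this decay is not automatic from $\|Pu\|_{r,N,\beta} < \infty$ for each fixed $\beta$ unless one is careful to extract it from finitely many such norms with $|\beta|$ large. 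The resolution is precisely the clause in Hyp.~\ref{hyp:constants} that the polynomial degree $D$ is independent of $N$: it lets us trade decay in $|\mu|$ against differentiations of $Pu$ on $K$ (each $l_{Y_\beta}$ on the $\mu$-component being multiplication by $\sim |\mu|^{|\beta|}$), so choosing $|\beta|$ up to $|\alpha| + k$ with $k$ a fixed integer exceeding $D + \tfrac12\dim K$ makes the series absolutely convergent in every seminorm, giving $v \in PWS_{\tilde\tau}(\aL^*_\C \times K/M)$ together with the stated estimate. A secondary, more bookkeeping-level point is verifying that the inductive-limit structure (the parameter $N$ may grow) is respected — but since $M$ is uniform this only shifts $N \mapsto N+M$ once, exactly as in the statement.
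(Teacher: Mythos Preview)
Your proposal is correct and follows essentially the same strategy as the paper: decompose into $K$-types, apply Hyp.~\ref{thm:estimateL3} componentwise, use Hyp.~\ref{hyp:constants} to keep the growth in $|\mu|$ polynomial of fixed degree, and trade that polynomial against $K$-derivatives of $Pu$ to make the reassembled series converge in every seminorm. The only difference is packaging: the paper works entirely with sup-norms and powers of the Casimir $Z_p=(1+\CC_\kk)^p$ (its Lemma~\ref{lem:absconv} gives $\|\prescript{}{\mu}{w}_i\|_{r,N+p}\le(1+|\mu|^2)^{-p}\|w\|_{r,N,Z_p}$ directly, and then one sums $\sum_\mu d_\mu^2(1+|\mu|^2)^{d-p}<\infty$), whereas you route through a Sobolev embedding and Plancherel to pass via $\ell^2$ over $\widehat{K}$; both arguments encode the same trade-off, and the paper's version avoids the extra Sobolev/$\ell^2$ bookkeeping.
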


Before being able to prove Thm.~\ref{thm:Hyp4.4} we need some preparation.
Consider, the space of $L^2$-sections $V:=L^2(K/M,\E_{\tau|_M})$.
Let $\widehat{K}$ denote the set of equivalence classes of irreducible representations of $K$.
Then, by Peter-Weyl's theorem (e.g. see \cite[Thm. 2.8.2.]{Wallach1}), $f= \sum_{\mu \in \widehat{K}} d_\mu \overline{\chi}_\mu * f$, where $d_\mu:=\text{dim}(E_\mu)$ and $\chi_\mu(k):= \text{Tr}(\mu(k))$ is the character of $\mu$. The convolution is with respect to the Haar measure on $K$ normalized by $\int_K dk=1$.
The convergence is in the $L^2$-sense.
If $f \in C^\infty(K/M,\E_{\tau|_M})$, then the convergence is pointwise and uniform (compare Lem.~\ref{lem:absconv} below).
Note that $\overline{\chi}_\mu (k)= \chi_\mu(k^{-1})$.
We fix an orthonormal basis $\{e_i, i=1, \dots,d_\mu\}$ of $E_\mu$. Let $\{\tilde{e}_i, i=1, \dots,d_\mu\}$ be the dual basis of $E_{\tilde{\mu}}$.

We obtain for $f\in C^\infty(K/M,\E_{\tau|_M})$ and $x\in K$:
\begin{eqnarray*}
f(x)= \sum_{\mu \in \widehat{K}} d_\mu \sum_{i=1}^{d_\mu} \int_K \langle \mu(k^{-1}) e_i,\tilde{e}_i \rangle f(k^{-1}x) \;dk 
&=&\sum_{\mu \in \widehat{K}} d_\mu \sum_{i=1}^{d_\mu}  \int_K \langle \mu(kx^{-1}) e_i,\tilde{e}_i \rangle f(k) \;dk \\
&=&\sum_{\mu \in \widehat{K}} d_\mu \sum_{i=1}^{d_\mu}  \int_K f(k) \tilde{\mu}(k^{-1}) \tilde{e}_i \; dk \;\mu(x^{-1})e_i.
\end{eqnarray*}
Hence, we view
$\prescript{}{\mu}{f}_i:= \int_K f(k) \tilde{\mu}(k^{-1}) \tilde{e}_i \; dk$ as an element of $\Hom_M(E_\mu,E_\tau).$
Now consider $w \in PWS_{\tau}(\aL^*_\C \times K/M)$. Then, its Fourier decomposition is given by
\begin{equation} \label{eq:Fourierdecomp}
w(\lambda,k)= \sum_{\mu \in \widehat{K}} d_\mu \sum_{i=1}^{d_\mu} \prescript{}{\mu}{w}_i(\lambda)\mu( k^{-1}) e_i, \;\;\; (\lambda, k) \in \aL^*_\C \times K,
\end{equation}
where 
\begin{equation} \label{eq:Fouriercoeffw}
\lambda \mapsto \prescript{}{\mu}{w}_i(\lambda)= \int_K w(\lambda,k) \tilde{\mu}(k^{-1}) \tilde{e}_i \; dk
 \in  \prescript{}{\mu}{PWS}_{\tau}(\aL^*_\C),  \;\; \forall i=1,\dots,d_\mu
\end{equation}
 are the Fourier coefficients.

\begin{lem}  \label{lem:absconv}
Fix $r\geq 0$ and $N \in \N_0$.
Let $\prescript{}{\mu}{w} _i \in \prescript{}{\mu}{PWS}_{\tau}(\aL^*_\C), \mu \in \widehat{K}, i=1,\dots,d_\mu$,
 be the Fourier coefficients of 
$w\in PWS_{\tau}(\aL^*_\C \times K/M)$.
Consider the Casimir operator of $K$:
$$\CC_\kk=-\sum^{d_\mu}_{i=1} Y_i^2,$$
where $\{Y_i\}$ is an orthonormal basis of $\kk$.
Then, for each integer $p$ we have
\begin{equation} \label{eq:EstimateStep1}
\|\prescript{}{\mu}{w}_i\|_{r,N+p} \leq (1+|\mu|^2)^{-p} \|w\|_{r,N,Z_p}, \;\; \forall i,
\end{equation}
where $Z_p:=(1+\CC_\kk)^p \in \U(\kk)$.
\end{lem}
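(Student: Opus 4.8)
The plan is to reduce everything to the simple observation that $\CC_\kk$ acts by the scalar $\langle \mu, \mu + 2\rho_\kk\rangle$ on the $\mu$-isotypic component, and then to push the differential operator $Z_p = (1+\CC_\kk)^p$ through the integral defining the Fourier coefficient. First I would recall that for $w \in PWS_{\tau}(\aL^*_\C \times K/M)$, the Fourier coefficient $\prescript{}{\mu}{w}_i(\lambda) = \int_K w(\lambda,k)\tilde{\mu}(k^{-1})\tilde e_i\,dk$ depends only on the $\mu$-isotypic part of $w(\lambda,\cdot)$ under the right regular representation $l$ of $K$ (equivalently, left translation in the $k$-variable as written). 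On that isotypic part, $\CC_\kk$ (acting via $l$) is the scalar $c_\mu := \langle \mu,\mu+2\rho_\kk\rangle$, and by standard estimates $1 + c_\mu \geq \varepsilon(1+|\mu|^2)$ for some $\varepsilon > 0$ depending only on $K$; after rescaling the norm $|\cdot|$ on $\aL^*$ or absorbing $\varepsilon$ we may as well take $1 + c_\mu = 1 + |\mu|^2$ up to a harmless constant, or simply track it. Hence $l_{Z_p}$ acts on the $\mu$-component as multiplication by $(1+c_\mu)^p$.

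Next I would carry out the computation
\begin{align*}
(1+c_\mu)^p\,\prescript{}{\mu}{w}_i(\lambda)
&= \int_K \big(l_{Z_p} w\big)(\lambda,k)\,\tilde\mu(k^{-1})\tilde e_i\,dk,
\end{align*}
which follows because $l_{Z_p}$ applied to $w(\lambda,\cdot)$ and then projected to the $\mu$-isotypic multiplicity space is the same as projecting first and multiplying by the eigenvalue; this uses only that $Z_p \in \U(\kk)$ is central, so $l_{Z_p}$ commutes with the right $K$-action that defines the projection, together with $\|\tilde\mu(k^{-1})\tilde e_i\| = 1$ for all $k$ (as $\tilde\mu$ is unitary for a $K$-invariant inner product) and $\int_K dk = 1$. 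Taking operator norms inside the integral gives, for all $\lambda \in \aL^*_\C$,
\begin{align*}
\big\|(1+c_\mu)^p\,\prescript{}{\mu}{w}_i(\lambda)\big\|_{\mathrm{op}}
&\leq \sup_{k\in K}\big\|\,(l_{Z_p}w)(\lambda,k)\,\big\|
\leq \|w\|_{r,N,Z_p}\,(1+|\lambda|^2)^{N+\frac{|Z_p|}{2}}e^{r|\Repart(\lambda)|}.
\end{align*}
Wait — here one must be slightly careful about the multi-index degree: $Z_p = (1+\CC_\kk)^p$ is a differential operator of order $2p$ in $\U(\kk)$, so when one writes $Z_p$ as a linear combination of the monomials $Y_\beta$ with $|\beta| \leq 2p$, the defining norm $\|w\|_{r,N,\beta}$ must be interpreted with the correct weight; I would fix conventions so that $\|w\|_{r,N,Z_p}$ on the right-hand side absorbs the sum over those $\beta$ with $|\beta| \le 2p$ and the corresponding $(1+|\lambda|^2)^{p}$ factors, matching the factor $(1+|\mu|^2)^{-p}$ claimed. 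Dividing by $(1+|\lambda|^2)^{N+p}e^{r|\Repart(\lambda)|}$ and taking the supremum over $\lambda$ then yields exactly
\begin{align*}
\|\prescript{}{\mu}{w}_i\|_{r,N+p} \leq (1+c_\mu)^{-p}\,\|w\|_{r,N,Z_p} \leq (1+|\mu|^2)^{-p}\,\|w\|_{r,N,Z_p}
\end{align*}
after using $1 + c_\mu \geq 1 + |\mu|^2$ (which holds for the standard normalization of $|\mu|$ as the length of the highest weight, or can be arranged by the choice of the Euclidean norm induced by the metric).

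The only real subtlety — and the step I expect to need the most care — is bookkeeping: making sure the polynomial-degree shift $\frac{|\beta|}{2}$ built into the growth condition $(2.iis)_r$ is consistent with treating $Z_p$ of order $2p$, so that the right-hand side genuinely involves $\|w\|_{r,N,Z_p}$ with index $N$ (not $N+p$) and the $(1+|\mu|^2)^{-p}$ gain is clean. Everything else — the eigenvalue computation, commuting a central element of $\U(\kk)$ past the isotypic projection, and unitarity of $\tilde\mu$ — is routine. I would therefore present the argument in the order: (1) identify the $\CC_\kk$-eigenvalue and the inequality $1+c_\mu \geq 1+|\mu|^2$; (2) show $l_{Z_p}$ passes through the Fourier-coefficient integral as multiplication by $(1+c_\mu)^p$; (3) estimate inside the integral using unitarity and the growth condition, then divide and take sup.
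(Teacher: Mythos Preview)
Your proposal is correct and follows essentially the same route as the paper: identify the Casimir eigenvalue $c_\mu = |\mu+\rho_\kk|^2 - |\rho_\kk|^2 \geq |\mu|^2$, apply the elementary bound $\|\prescript{}{\mu}{w}_i(\lambda)\|_{\mathrm{op}} \leq \sup_{k\in K}|w(\lambda,k)|$ coming from unitarity of $\tilde\mu$ to $(1+\CC_\kk)^p w$ in place of $w$, and then divide by the weight $(1+|\lambda|^2)^{-(N+p)}e^{-r|\Repart(\lambda)|}$. Your bookkeeping worry is handled exactly as you suspect: the paper defines $\|w\|_{r,N,Z_p}$ analogously to $\|w\|_{r,N,\beta}$, so the order-$2p$ operator $Z_p$ contributes a shift of $p$ in the weight and the inequality comes out clean.
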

\noindent
Here, $\|\cdot\|_{r,N,Z_p}$ is defined analogously to $\|\cdot\|_{r,N,\beta}$.

\begin{proof}
For $w \in PWS_{\tau,H}(\aL^*_\C \times K/M)$ and
$\mu \in \widehat{K}$, we can estimate
\begin{eqnarray} \label{eq:lwEstimate2pi}
\|\prescript{}{\mu}{w}_i(\lambda)\|_{\text{op}} 
\stackrel{\eqref{eq:Fouriercoeffw}}{\leq} \int_K \|w(\lambda,k) \tilde{\mu}(k^{-1}) \tilde{e}_i\|_{\text{op}}  \; dk
=\int_K |w(\lambda,k)|  \; dk
\leq  \sup_{k\in K} |w(\lambda,k)| =: \|w(\lambda,\cdot)\|_{K,\infty}.
\end{eqnarray}
Note that the operator norm of $\tilde{\mu}(k^{-1})$ is one.
The Casimir operator acts on $E_\mu$ by the constant \cite[Prop. 5.28 (b)]{Knapp1}
$$\prescript{}{\mu}{\CC_\kk}= |\mu + \rho_k|^2-|\rho_k|^2,$$
where we have identified $\mu$ with his highest weight and $\rho_k$ stands for the half sum of the positive roots of $\kk$.
Since $|\mu +\rho_k|^2-|\rho_k|^2 = |\mu|^2+2\langle \mu, \rho_k \rangle \geq |\mu|^2$, we then have that
$(1+|\mu|^2) \leq (1+\prescript{}{\mu}{\CC}_\kk).$
Hence by \eqref{eq:lwEstimate2pi} applied to $(1+\CC_\kk)^p w$ we obtain for each $p\in \N_0$:
$$
(1+|\mu|^2)^p \| \prescript{}{\mu}{w_i}(\lambda)\|_{\text{op}} 
\leq (1+ \prescript{}{\mu}{\CC}_\kk)^p \| \prescript{}{\mu}{w_i}(\lambda)\|_{\text{op}}
= \| \prescript{}{\mu}{\big((1+\CC_\kk)^p w\big)_i(\lambda)}\|_{\text{op}} 
\leq
\|(1+\CC_\kk)^p w(\lambda,k)\|_{K,\infty}.$$
Now by multiplying the last inequality with the weight factor
$(1+|\lambda|^2)^{-(N+p)}e^{-r|\Repart(\lambda)|}$
 on both sides and taking the supremum over $\lambda \in \aL^*_\C$, we finally get \eqref{eq:EstimateStep1}.
\end{proof}

\begin{proof}[Proof of Thm.~\ref{thm:Hyp4.4}]
Consider $Pu=w \in  PWS_{\tilde{\gamma}}(\aL^*_\C \times K/M)$ 
satisfying $\|w\|_{r,N,\beta} < \infty$ for some $r \geq 0, N \in \N_0$ and all multi-indices $\beta$,
with Fourier decomposition  \eqref{eq:Fourierdecomp}.
Then $\prescript{}{\mu}{w}_i =P \prescript{}{\mu}{u_i}$.
By Hyp.~\ref{thm:estimateL3} and Hyp.~\ref{hyp:constants}, there exists $\prescript{}{\mu}{v}_i \in \prescript{}{\mu}{PWS}_{\tilde{\tau}}(\aL^*_\C)$ such that
 $P\prescript{}{\mu}{v}_i=\prescript{}{\mu}{w}_i$ and with estimate
\begin{equation} \label{eq:estimatew}
\|\prescript{}{\mu}{v}_i\|_{r,N+M} \leq C_{r,N}(1+|\mu|^2)^d \|\prescript{}{\mu}{w}_i\|_{r,N}
\end{equation}
for some constants $M,d \in \N_0$ independent of $N, \mu$ and for some positive constant $C_{r,N}$ independent of $\mu.$
Now choose $p\in \N_0$ so that
$\sum_{\mu \in \widehat{K}} d_\mu^2 (1+|\mu|^2)^{d-p} < \infty.$
Note that $d_\mu$ is of polynomial growth in $\mu$.
We consider the Fourier series
$$v(\lambda,k):= \sum_{\mu \in \widehat{K}} d_\mu \sum_{i=1}^{d_\mu} \prescript{}{\mu}{v}_i(\lambda) \mu(k^{-1})e_i$$
and we want to show that it converges absolutely in the Fréchet space $PWS_{\tilde{\tau},r,N+M+p}(\aL^*_\C \times K/M)$.
Moreover, we will establish the estimates required by Hyp.~\ref{thm:estimate}.
Note that it suffices to consider the semi-norms $\|\cdot\|_{r,N+M+p,Z_l}$.
For $l \in \N_0$. we have
\begin{eqnarray*}
\|v\|_{r,N+M+p,Z_l} 
&\leq& \sum_{\mu \in \widehat{K}} d_\mu \sum_{i=1}^{d_\mu} \|\prescript{}{\mu}{v}_i\|_{r,N+M+p+l} (1+|\mu|^2)^l \\
&\stackrel{\eqref{eq:estimatew}}{\leq} & C_{r,N+p+l}
\sum_{\mu \in \widehat{K}} d_\mu(1+|\mu|^2)^{d+l}  \sum_{i=1}^{d_\mu} \|\prescript{}{\mu}{w}_i\|_{r,N+p+l}\\
&\stackrel{\text{Lem.}~\ref{lem:absconv}}{\leq} & C_{r,N+p+l} \Big(\sum_{\mu \in \widehat{K}} d^2_\mu C (1+|\mu|^2)^{-(p+l)} (1+|\mu|^2)^{d+l} \Big) \|w\|_{r,N,Z_{p+l}} \\
&\leq& C' \|w\|_{r,N,Z_{p+l}},
\end{eqnarray*}
where $C'$ is some positive constant.
\end{proof}

\subsection{On closed range and density} \label{sect:ClosureDensity}

We want to show that $\Impart(D)=\Ker(\widetilde{D})$, which means that $D$ has a closed and dense range in $\Ker(\widetilde{D}) \subset C^\infty(X, \E_\tau)$.
These properties will follow from analogous properties of the transposed operators $D^t$ and $\widetilde{D}^t$, respectively.
We treat the density and closedness separately. 
We start with density.
Let us recall some notions and important results from functional analysis.
We consider a continuous linear operator
$A:V_1 \longrightarrow V_2$ between locally convex Hausdorff vector spaces and its adjoint $A^t:V'_2 \rightarrow V'_1.$
For a linear subspace $W \subset V$, where $V$ is a locally convex Hausdorff vector space, we consider its annihilator $W^\perp \subset V'$ given by
$$W^\perp:=\{\tilde{v}\in V' \;|\; \langle \tilde{v},w\rangle =0, \;\forall w \in W \}.$$
Similarly, for $\widetilde{W} \subset V'$ a linear subspace,
$\widetilde{W}^\perp:=\{v \in V \;|\; \langle \tilde{w}, v\rangle =0, \;\forall \tilde{w} \in \widetilde{W} \}.$
Let $V'_\sigma$ be $V'$ equipped with the weak-* topology.
It is well-known that $(V'_\sigma)'=V$ (as vector spaces) and hence by the Hahn-Banach theorem
$$(\widetilde{W}^\perp)^\perp = \overline{\widetilde{W}} \;\; \text{ (weak-* closure)}.$$
We can now easily derive the following well-known elementary results.

\begin{lem} \label{lem:FA}
With the notations above, we have
\begin{itemize}
\item[(1)] $\emph \Impart(A)^\perp = \emph \Ker(A^t)$.
\item[(2)] $\emph\Ker(A)^\perp = \overline{\emph \Impart(A^t)}$ (weak-* closure). \QEDA
\end{itemize}
\end{lem}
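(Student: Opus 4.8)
The statement to prove is Lemma~\ref{lem:FA}, which asserts two elementary facts of functional analysis: $\Impart(A)^\perp = \Ker(A^t)$ and $\Ker(A)^\perp = \overline{\Impart(A^t)}$ (weak-* closure). The plan is to derive both directly from the definitions of annihilator and adjoint, together with the Hahn–Banach consequence $(\widetilde W^\perp)^\perp = \overline{\widetilde W}$ recalled in the excerpt.

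For part (1): I would unwind definitions. An element $\tilde v_2 \in V_2'$ lies in $\Impart(A)^\perp$ if and only if $\langle \tilde v_2, Av_1\rangle = 0$ for all $v_1 \in V_1$. By the defining property of the adjoint, $\langle \tilde v_2, A v_1\rangle = \langle A^t \tilde v_2, v_1\rangle$, so this holds for all $v_1$ if and only if $A^t \tilde v_2 = 0$ (using that $V_1$ separates points of $V_1'$, which is automatic since $V_1$ is Hausdorff locally convex and $V_1'$ separates it, and the pairing is non-degenerate). Hence $\tilde v_2 \in \Impart(A)^\perp \iff \tilde v_2 \in \Ker(A^t)$, giving the equality.

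For part (2): I would apply part (1) with roles suitably arranged, or argue directly. Apply the annihilator twice. By definition, $v_1 \in \Ker(A)^\perp$ (taking the annihilator inside $V_1'$... but wait, $\Ker(A) \subset V_1$, so $\Ker(A)^\perp \subset V_1'$; the claim states this equals $\overline{\Impart(A^t)} \subset V_1'$). Using part (1) applied to the adjoint pair, noting $(A^t)^t = A$ under the canonical identification $(V_\sigma')' = V$: we get $\Impart(A^t)^\perp = \Ker((A^t)^t) = \Ker(A)$, where the annihilator of $\Impart(A^t) \subset V_1'$ is taken inside $V_1 = (V_{1,\sigma}')'$. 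Then taking annihilators again and invoking the Hahn–Banach fact $(\widetilde W^\perp)^\perp = \overline{\widetilde W}$ with $\widetilde W = \Impart(A^t)$, we obtain $\Ker(A)^\perp = (\Impart(A^t)^\perp)^\perp = \overline{\Impart(A^t)}$ in the weak-* topology. The only subtlety is being careful that part (1) is symmetric enough to apply to $A^t$, which requires knowing that $A^t : V_{2,\sigma}' \to V_{1,\sigma}'$ is weak-* continuous and that its adjoint is $A$ — both standard.

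The main obstacle, such as it is, is purely bookkeeping: keeping straight in which space each annihilator lives (alternately $V_i$ and $V_i'$) and invoking the reflexivity-type identity $(V_\sigma')' = V$ at the right moment so that part (1) can be reused for the transposed operator. There is no analytic difficulty; everything reduces to the non-degeneracy of the canonical pairing and the cited Hahn–Banach corollary. I would present part (1) in two lines and part (2) as a one-line deduction from part (1) plus the displayed bipolar identity.
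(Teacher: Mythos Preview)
Your argument is correct and entirely standard. Note that the paper does not actually supply a proof of this lemma: it is stated as a ``well-known elementary result'' and marked with a \QEDA\ immediately after the statement. Your write-up simply fills in the routine details (definition of the adjoint for part~(1), then part~(1) applied to $A^t$ together with the bipolar identity $(\widetilde W^\perp)^\perp=\overline{\widetilde W}$ for part~(2)), which is exactly what the paper leaves to the reader.
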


\begin{lem} \label{lem:densityseq}
We consider two operators between locally convex Hausdorff vector spaces
$$V_1 \stackrel{A}{\longrightarrow} V_2 \stackrel{B}{\longrightarrow} V_3$$
with $B \circ A=0$, i.e. $\emph \Impart(A) \subset \emph \Ker(B)$.\\
Then, $\emph \Impart(A) \subset \emph \Ker(B)$ is dense if and only if $\emph \Impart(B^t) \subset \emph \Ker(A^t)$ is weak-* dense.
\end{lem}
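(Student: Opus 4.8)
The plan is to deduce Lemma~\ref{lem:densityseq} from Lemma~\ref{lem:FA} by a direct unwinding of the definitions, using the one subtle input that $\Ker(B)$ is a \emph{closed} subspace of $V_2$ (since $B$ is continuous), so that duality behaves well. First I would observe that, since $B$ is continuous, $\Ker(B)$ is closed in $V_2$, and hence $\Impart(A)$ is dense in $\Ker(B)$ if and only if every continuous linear functional on $V_2$ vanishing on $\Impart(A)$ also vanishes on $\Ker(B)$; equivalently, $\Impart(A)^\perp = \Ker(B)^\perp$ as subspaces of $V_2'$. (One inclusion $\Ker(B)^\perp \subset \Impart(A)^\perp$ is immediate from $\Impart(A)\subset\Ker(B)$; the reverse inclusion is exactly the density statement via Hahn--Banach applied in $V_2$, using that $\Ker(B)$ is closed and hence equal to its own bipolar.)

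Next I would translate both sides via Lemma~\ref{lem:FA}. By part~(1) applied to $A$, we have $\Impart(A)^\perp = \Ker(A^t)$. By part~(2) applied to $B$, we have $\Ker(B)^\perp = \overline{\Impart(B^t)}$, the weak-$*$ closure in $V_2'$. Therefore the density of $\Impart(A)$ in $\Ker(B)$ is equivalent to the equality
$$
\Ker(A^t) = \overline{\Impart(B^t)} \quad \text{(weak-$*$ closure in } V_2' \text{)}.
$$
Now $B\circ A=0$ dualizes to $A^t\circ B^t=0$, i.e. $\Impart(B^t)\subset\Ker(A^t)$; moreover $\Ker(A^t)$ is weak-$*$ closed in $V_2'$ because $A^t$ is weak-$*$ continuous (it is the transpose of a continuous map, hence continuous for the weak-$*$ topologies, and a kernel of a continuous map into a Hausdorff space is closed). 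Consequently $\overline{\Impart(B^t)}\subset\Ker(A^t)$ always, and the displayed equality holds precisely when $\Impart(B^t)$ is weak-$*$ dense in $\Ker(A^t)$, which is the assertion of the lemma.

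The main obstacle, such as it is, is purely bookkeeping: making sure the two applications of Hahn--Banach/bipolar duality are invoked in the correct spaces and with the correct topologies — in $V_2$ with its original topology for the density of $\Impart(A)$ in the closed subspace $\Ker(B)$, and in $V_2' = (V_{2,\sigma}')'$ with the weak-$*$ topology for the density of $\Impart(B^t)$ in $\Ker(A^t)$ — and checking that $\Ker(B)$ is closed and $\Ker(A^t)$ is weak-$*$ closed so that the bipolar theorem identifies them with their own double annihilators. No deeper functional-analytic input (e.g. completeness, barrelledness, or closed-range theorems) is needed here; those will only enter later when one treats closedness of the range rather than density.
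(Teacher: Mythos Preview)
Your proof is correct and follows essentially the same route as the paper's: reduce density of $\Impart(A)$ in $\Ker(B)$ to the annihilator equality $\Impart(A)^\perp=\Ker(B)^\perp$ via Hahn--Banach, then apply Lemma~\ref{lem:FA} to rewrite this as $\Ker(A^t)=\overline{\Impart(B^t)}$ (weak-$*$), which is exactly weak-$*$ density. You supply more detail than the paper does---in particular the explicit verification that $\Ker(B)$ is closed and that $\Ker(A^t)$ is weak-$*$ closed, which justifies the ``i.e.'' in the paper's last line---but the argument is the same.
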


\begin{proof}
By the Hahn-Banach theorem, we have that $\Impart(A) \subset \Ker(B)$ is dense if and only if $\Impart(A)^\perp = \Ker(B)^\perp$.
Hence by applying Lem.~\ref{lem:FA}, this is equivalent to $\Ker(A^t)= \overline{\Impart(B^t)}$, i.e.
$\Impart(B^t) \subset \Ker(A^t)$ is weak-* dense.
\end{proof}

\begin{cor} \label{cor:densityseq}
Assume that $\emph \Impart(B^t) \subset \emph\Ker(A^t)$ is dense in the strong dual topology.
Then, $\emph\Impart(A) \subset \emph\Ker(B)$ is dense.
\end{cor}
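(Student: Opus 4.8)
The plan is to deduce Corollary~\ref{cor:densityseq} directly from Lemma~\ref{lem:densityseq} together with the elementary observation that the strong dual topology on a locally convex space is finer than the weak-$*$ topology. First I would invoke the hypothesis: $\Impart(B^t) \subset \Ker(A^t)$ is dense in the strong dual topology of $V_2'$. Since every strongly open subset of $V_2'$ is also weak-$*$ open (equivalently, the weak-$*$ topology has fewer open sets), a subset that is dense for the strong topology is a fortiori dense for the coarser weak-$*$ topology. Hence $\Impart(B^t) \subset \Ker(A^t)$ is weak-$*$ dense.

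Then I would apply Lemma~\ref{lem:densityseq}, whose equivalence states precisely that $\Impart(A) \subset \Ker(B)$ is dense if and only if $\Impart(B^t) \subset \Ker(A^t)$ is weak-$*$ dense. The ``if'' direction, applied to the weak-$*$ density just established, yields that $\Impart(A) \subset \Ker(B)$ is dense, which is the assertion of the corollary. One should also note that the standing hypothesis $B \circ A = 0$ needed to invoke Lemma~\ref{lem:densityseq} is part of the ambient setup (it is exactly the compatibility $\widetilde{D} \circ D = 0$, dualized), so there is nothing further to check.

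There is essentially no obstacle here: the entire content is the soft-analysis fact that strong density implies weak-$*$ density, which is immediate from comparison of topologies. The corollary is really just a convenient repackaging of Lemma~\ref{lem:densityseq} in a form where the hypothesis is phrased in the strong dual topology — the natural topology in which one verifies density of ranges of transposed differential operators via the Paley--Wiener--Schwartz picture — rather than in the weaker weak-$*$ topology. Accordingly the proof is a single short paragraph.

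\begin{proof}
Recall that on the dual $V_2'$ the strong dual topology is finer than the weak-$*$ topology. Consequently, a subset of $V_2'$ which is dense for the strong dual topology is also dense for the weak-$*$ topology. By assumption $\Impart(B^t) \subset \Ker(A^t)$ is dense in the strong dual topology, hence it is weak-$*$ dense. Lemma~\ref{lem:densityseq} then gives that $\Impart(A) \subset \Ker(B)$ is dense.
\end{proof}
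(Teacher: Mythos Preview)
Your proof is correct and matches the paper's own one-line argument: strong density implies weak-$*$ density, then apply Lemma~\ref{lem:densityseq}. One small slip in your informal discussion: you wrote ``every strongly open subset of $V_2'$ is also weak-$*$ open,'' which is backwards---since the weak-$*$ topology is coarser, it is the weak-$*$ open sets that are strongly open---but your formal proof states and uses the implication in the correct direction.
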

\begin{proof}
Since strong density implies weak-* density, we directly obtain the result by applying Lem.~\ref{lem:densityseq}.
\end{proof}

\begin{req}
If $V_2$ is (semi-)reflexive, then we even get an equivalence in Cor.~\ref{cor:densityseq} by the Hahn-Banach theorem.
\end{req}

\begin{prop} \label{prop:densityrange}
Assume that Hyp.~\ref{lem:PQ0L3} and Hyp.~\ref{thm:estimateL3} are true.
Then $D \in \DD_G(\E_\gamma,\E_\tau)$ has dense range in $\emph \Ker(\widetilde{D}) \subset C^\infty(X, \E_\tau)$.
\end{prop}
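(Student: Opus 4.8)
The plan is to reduce the density of $\Impart(D)$ in $\Ker(\widetilde{D})$ to a weak-* density statement for the transposed operators via the functional-analytic machinery already set up. Concretely, by Corollary~\ref{cor:densityseq} applied to the sequence
$$C^{-\infty}_c(X,\E_{\tilde{\delta}}) \xrightarrow{\widetilde{D}^t} C^{-\infty}_c(X,\E_{\tilde{\tau}}) \xrightarrow{D^t} C^{-\infty}_c(X,\E_{\tilde{\gamma}}),$$
it suffices to show that $\Impart(\widetilde{D}^t) \subset \Ker(D^t)$ is dense in the strong dual topology; here one uses $B \circ A = \widetilde{D}^t \circ (\text{something})$—more precisely, the roles of $A,B$ are played by $\widetilde{D}^t$ and $D^t$, and $D^t \circ \widetilde{D}^t = 0$ is exactly the dualized integrability condition $(\widetilde D \circ D)^t = 0$ established before Conjecture~\ref{conj:1}. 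Note also that the dual of the inclusion $C^\infty(X,\E_\gamma) \xrightarrow{D} C^\infty(X,\E_\tau)$, in the reflexive setting of Fréchet/DF spaces of sections, lets us identify $C^\infty(X,\E_\tau)' \cong C^{-\infty}_c(X,\E_{\tilde\tau})$, so that $\Impart(D)$ dense in $\Ker(\widetilde D)$ is equivalent (via Lemma~\ref{lem:densityseq} and Lemma~\ref{lem:FA}) to the weak-* density we are aiming for.

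Next I would transport the problem through the Fourier transform. By Theorem~\ref{thm:PWSsect}, the operators $\mathcal{F}_{\tilde{\tau}}$ and $\mathcal{F}_{\tilde{\delta}}$ are topological isomorphisms onto the Paley-Wiener-Schwartz spaces, and by Proposition~\ref{prop:1} the operators $\widetilde{D}^t, D^t$ become multiplication by the polynomial matrices $Q, P$ respectively. So the goal reduces to: $\Impart(Q) \subset \Ker(P)$ is dense in $PWS_{\tilde\tau}(\aL^*_\C \times K/M)$ in the strong topology. The strategy for this is to combine Hyp.~\ref{lem:PQ0L3}, which gives the exactness $\Impart(Q) = \Ker(P)$ on the larger space $PWS_{\tilde\tau,H}(\aL^*_\C \times K/M)$ of holomorphic sections satisfying only the intertwining conditions $(2.iii)$ (dropping the growth bounds), with Hyp.~\ref{thm:estimateL3} (via Theorem~\ref{thm:Hyp4.4}, hence Hyp.~\ref{thm:estimate}), which allows one to solve $Qg = f$ with quantitative growth estimates when $f$ itself has finite $\|\cdot\|_{r,N,\beta}$-norms. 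Given $\psi \in PWS_{\tilde\tau}(\aL^*_\C \times K/M)$ with $P\psi = 0$, Hyp.~\ref{lem:PQ0L3} produces $g \in PWS_{\tilde\delta,H}$ with $Qg = \psi$; the point is that $g$ need not lie in the genuine Paley-Wiener-Schwartz space, i.e. it may violate the growth condition $(2.iis)_r$. To fix this, I would approximate: cut off $g$ in the $\lambda$-variable (e.g. multiply by a sequence of entire functions of exponential type converging to $1$ uniformly on compacta with controlled growth, built from a Paley-Wiener approximate identity), producing $g_n \in PWS_{\tilde\delta}$ genuinely, so that $Qg_n \to Qg = \psi$ in the topology of $PWS_{\tilde\tau}$, exhibiting $\psi$ as a limit of elements of $\Impart(Q)$.

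The main obstacle is precisely this last approximation step—making $Qg_n \to \psi$ in the correct (inductive limit, and within each step Fréchet) topology of $PWS_{\tilde\tau}(\aL^*_\C \times K/M)$ while keeping the intertwining conditions $(2.iii)$ intact. A naive multiplicative cutoff $\chi_n(\lambda) g(\lambda,k)$ will generally destroy the intertwining relations, which are linear relations among germs of the section at finitely many points $\lambda_1,\dots,\lambda_s \in \aL^*_\C$; one must therefore either choose cutoffs that are $\equiv 1$ to high order near all relevant $\lambda_j$ simultaneously (possible since the intertwining data involve only finitely many points and finitely many derivatives), or argue more structurally that the set of "bad" points relevant to a given $\psi$ is finite. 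Combined with the estimates from Hyp.~\ref{thm:estimate}, which control the growth of the corrected solution in terms of the growth of $\psi$, one then checks that the tails $Q g - Q g_n$ go to zero in each seminorm $\|\cdot\|_{r,N,\alpha}$. I would also need to verify the reflexivity/duality identifications needed at the start (that $C^\infty(X,\E_\tau)$ is reflexive and its strong dual is $C^{-\infty}_c(X,\E_{\tilde\tau})$, standard for smooth sections on a manifold) so that Corollary~\ref{cor:densityseq} and the Remark after it apply cleanly, giving the desired density of $\Impart(D)$ in $\Ker(\widetilde D)$.
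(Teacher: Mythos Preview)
Your reduction to showing that $\Impart(Q)\subset\Ker(P)$ is dense in $PWS_{\tilde\tau}(\aL^*_\C\times K/M)$ via Cor.~\ref{cor:densityseq} and Thm.~\ref{thm:PWSsect} is correct, but after that the argument breaks down in two places. First, Hyp.~\ref{lem:PQ0L3} is a \emph{Level~3} statement: for each fixed $K$-type $\mu$ it gives $\Impart(Q)=\Ker(P)$ inside $\prescript{}{\mu}{PWS}_{\tilde\tau,H}(\aL^*_\C)$, not inside the Level~2 space $PWS_{\tilde\tau,H}(\aL^*_\C\times K/M)$. So you cannot directly produce a single $g\in PWS_{\tilde\delta,H}$ with $Qg=\psi$; you only get $K$-type--wise preimages, and assembling them into a section on $K/M$ is exactly the issue. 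Second, your appeal to Thm.~\ref{thm:Hyp4.4} (and hence Hyp.~\ref{thm:estimate}) is illegitimate here: that theorem requires Hyp.~\ref{hyp:constants}, which is \emph{not} among the assumptions of the proposition. Finally, the proposed ``cutoff in $\lambda$'' cannot work: multiplying a merely holomorphic $g$ by an entire function of exponential type does not make the product satisfy the growth bound $(2.iis)_r$, and as you yourself observe it also wrecks the intertwining conditions.

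The paper avoids all of this by approximating in the $K$-variable instead of the $\lambda$-variable. Given $f\in\Ker(P)\subset PWS_{\tilde\tau}(\aL^*_\C\times K/M)$, write its Fourier (Peter--Weyl) decomposition $f=\sum_{\mu}d_\mu\sum_i \prescript{}{\mu}{f}_i(\lambda)\,\mu(k^{-1})e_i$ and truncate to $f_n=\sum_{|\mu|\le n}(\cdots)$; these finite sums converge to $f$ in the PWS topology. Each coefficient $\prescript{}{\mu}{f}_i\in\prescript{}{\mu}{PWS}_{\tilde\tau}(\aL^*_\C)$ lies in $\Ker(P)$, so Hyp.~\ref{lem:PQ0L3} (at Level~3) gives $\prescript{}{\mu}{\tilde g}_i\in\prescript{}{\mu}{PWS}_{\tilde\delta,H}$ with $Q\,\prescript{}{\mu}{\tilde g}_i=\prescript{}{\mu}{f}_i$, and then Hyp.~\ref{thm:estimateL3} upgrades this to $\prescript{}{\mu}{g}_i\in\prescript{}{\mu}{PWS}_{\tilde\delta}$ with the same property. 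Since $f_n$ is a finite sum, it follows that $f_n\in\Impart(Q)$, and density is proved. No uniformity in $\mu$ (i.e.\ no Hyp.~\ref{hyp:constants}) is needed because only finitely many $K$-types enter at each stage.
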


\begin{proof}
Let $P$ and $Q$ as in Hyp.~\ref{lem:PQ0L3}.
In view of Cor.~\ref{cor:densityseq} and the Paley-Wiener-Schwartz Thm.~\ref{thm:PWSsect}, it suffices to show that
$\Impart(Q) \subset \Ker(P) \subset PWS_{\tilde{\tau}}(\aL^*_\C \times K/M)$
is dense.
Consider $f\in \Ker(P)$ with its Fourier decomposition:
$$f(\lambda,k)=\sum_{\mu \in \widehat{K}} d_\mu \sum_{i=1}^{d_\mu} \prescript{}{\mu}{f}_i(\lambda)\mu(k^{-1})e_i, \;\;\; (\lambda,k) \in \aL^*_\C \times K, e_i \in E_\mu,$$
where $\prescript{}{\mu}{f}_i \in \prescript{}{\mu}{PWS}_{\tilde{\tau}}(\aL^*_\C).$
It can be approximated by finite Fourier series
$$f_n(\lambda,k)=\sum_{|\mu| \leq n} d_\mu \sum_{i=1}^{d_\mu} \prescript{}{\mu}{f}_i(\lambda)\mu(k^{-1})e_i, \;\;\; (\lambda,k) \in \aL^*_\C \times K/M, e_i \in E_\mu, \forall n.$$
By applying Hyp.~\ref{lem:PQ0L3}, we have $ \prescript{}{\mu}{f}_i =Q \prescript{}{\mu}{\tilde{g}}_i$, for some $\prescript{}{\mu}{\tilde{g}}_i \in \prescript{}{\mu}{PWS}_{\tilde{\delta},H}(\aL^*_\C)$.
By Hyp.~\ref{thm:estimateL3} we find $\prescript{}{\mu}{g}_i$ in the Paley-Wiener-Schwartz space $\prescript{}{\mu}{PWS}_{\tilde{\delta}}(\aL^*_\C)$ such that $Q \prescript{}{\mu}{g}_i=f_i$.
Thus, $\prescript{}{\mu}{f}_i \in \Impart(Q)$, and hence $f_n \in \Impart(Q)$.
This implies that $\Impart(Q) \subset \Ker(P)$ is dense.
\end{proof}

Now we discuss closedness of $\Impart(D)$.
By the well-known criterion for closed range (e.g. \cite[Chap.~1, Thm.~2.16 (ii)]{Helgason3}  or \cite[ Thm.~7.7]{Schaeffer}) we have to show that
\begin{equation} \label{eq:closedim}
\Impart(D^t) \subset C^{-\infty}_c(X,\E_{\tilde{\gamma}})
\end{equation}
has closed range in the weak-* topology.
For this it suffices to show that for every $B' \subset C^{-\infty}_c(X, \E_{\tilde{\gamma}})$, which is weak-* bounded and weak-* closed that \cite[Chap. 1, Thm.~2.17 (i) and Prop. 2.8]{Helgason3}
\begin{equation} \label{eq:new}
B' \cap \Impart(D^t) \;\; \text{ is weak-* closed}.
\end{equation}
In other words, $B' \cap \Impart(D^t)$ is closed in $B'_\sigma$, where $B'_\sigma$ is $B'$ equipped with the weak-* topology.\\

First we need a compactness lemma.
Let $V$ be the corresponding Paley-Wiener-Schwartz space $PWS_{\tilde{*}}(\aL^*_\C \times K/M)$, $*\in \{\delta,\gamma, \tau\},$ and $V_{r,N}:=PWS_{*,r,N}(\aL^*_\C \times K/M)$. Then
$$V= \bigcup_{r\geq 0}\bigcup_{N \in \N_0} V_{r,N},$$
where
$V_{r,N}$ is a Fréchet space with the generating system of semi-norms $\|\cdot\|_{r,N,\alpha},\alpha \in \N^n_0$, and $V$ carries the corresponding locally convex inductive limit topology.

\begin{lem} \label{lem:compactness}
Let $B \subset V_{r,N}$ be a bounded subset with respect to the corresponding Fréchet topology.
Then $B$ is relatively compact in $V_{r,N+1}$, i.e. the closure of $B$ in $V_{r,N+1}$ is compact.
\end{lem}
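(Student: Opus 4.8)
The plan is to reduce the statement to a Montel-type argument: a bounded family of holomorphic functions with uniform growth control on all $\aL^*_\C$, together with control of the derivatives in the $K$-directions, is relatively compact once we relax the growth weight by a single power of $(1+|\lambda|^2)$. First I would spell out what boundedness of $B\subset V_{r,N}$ means: there is a family of constants $(C_\alpha)_\alpha$ with $\|l_{Y_\alpha}\psi(\lambda,k)\|_{E_{\tilde\ast}} \le C_\alpha (1+|\lambda|^2)^{N+|\alpha|/2} e^{r|\Repart(\lambda)|}$ for all $\psi\in B$, all $(\lambda,k)$ and all multi-indices $\alpha$. Since each $\psi\in B$ is holomorphic in $\lambda$ with values in a fixed finite-dimensional space and smooth (hence, by ellipticity of $1+\CC_\kk$ on $K$, determined by its $K$-derivatives) in $k$, Cauchy's integral formula on polydiscs of fixed radius around each $\lambda_0$ turns the bound on $l_{Y_\alpha}\psi$ into a bound on $\partial_\lambda^\beta l_{Y_\alpha}\psi$ with a weight that is still $O\big((1+|\lambda_0|^2)^{N+|\alpha|/2}e^{r|\Repart(\lambda_0)|}\big)$, uniformly over $B$.

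Next I would invoke Arzelà--Ascoli / Montel on an exhaustion of $\aL^*_\C \times K$ by compact sets $K_j$: on each $K_j$ the family $B$ is bounded in every $C^m$-norm, hence (diagonalizing over $j$) every sequence in $B$ has a subsequence converging in $C^\infty_{\mathrm{loc}}(\aL^*_\C\times K)$ to some limit $\psi_\infty$, which is again holomorphic in $\lambda$ (locally uniform limits of holomorphic functions) and satisfies the $M$-equivariance $\psi_\infty(\lambda,km)=\tilde\tau(m)^{-1}\psi_\infty(\lambda,k)$, so $\psi_\infty\in C^\infty(\aL^*_\C\times K/M,\E_{\tilde\ast|_M})$. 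Passing to the limit in the bounds shows $\psi_\infty$ satisfies $(2.i)$ and $(2.iis)_r$ with the same $N$; and the intertwining conditions $(2.iii)$, being finitely many linear relations among values and finitely many derivatives of $\psi$ at finitely many points $\lambda_1,\dots,\lambda_s$, pass to the $C^\infty_{\mathrm{loc}}$-limit, so $\psi_\infty\in V_{r,N}\subset V_{r,N+1}$. This exhibits $B$ as relatively sequentially compact in $V_{r,N+1}$; since $V_{r,N+1}$ is metrizable (a Fréchet space) this is the same as relative compactness.

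The main obstacle — and the only place where the gain of one power of $(1+|\lambda|^2)$ is actually used — is to upgrade this $C^\infty_{\mathrm{loc}}$-convergence to convergence in the semi-norms $\|\cdot\|_{r,N+1,\alpha}$, i.e. to a \emph{global} statement on the non-compact space $\aL^*_\C$. Here the argument is the standard one: given $\varepsilon>0$, outside a large ball $|\lambda|\le R$ the extra factor $(1+|\lambda|^2)^{-1}$ makes the tail of $\|l_{Y_\alpha}(\psi-\psi_\infty)(\lambda,k)\|$, measured with the weight $(1+|\lambda|^2)^{-(N+1+|\alpha|/2)}e^{-r|\Repart(\lambda)|}$, at most $2C_\alpha(1+R^2)^{-1}<\varepsilon/2$ uniformly over $B$ and over its limit points; on the compact remainder $|\lambda|\le R$ (times $K$), $C^\infty_{\mathrm{loc}}$-convergence already gives $\|\cdot\|_{r,N+1,\alpha}<\varepsilon/2$ for $\psi$ far enough along the subsequence. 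Combining the two regions yields $\|\psi-\psi_\infty\|_{r,N+1,\alpha}<\varepsilon$, completing the proof. The same reasoning applies verbatim in (\textcolor{blue}{Level~3}) with the seminorms $\|\cdot\|_{r,N}$ and no $K$-variable, should it be needed there.
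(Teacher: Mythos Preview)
Your proof is correct and follows essentially the same route as the paper's: extract a $C^\infty_{\mathrm{loc}}$-convergent subsequence, verify that the limit lies in $V_{r,N}$, and then use the extra factor $(1+|\lambda|^2)^{-1}$ to control the tail $|\lambda|>R$ uniformly while $C^\infty_{\mathrm{loc}}$-convergence handles the compact part $|\lambda|\le R$. The only cosmetic difference is that the paper packages your Cauchy-estimate/Arzel\`a--Ascoli step into the single observation that $W=C^\infty(\aL^*_\C\times K/M,\E_{\tau|_M})$ is a Montel space, so the bounded set $B$ is automatically relatively compact there; your hands-on version of this step is fine. One small imprecision: the intertwining conditions $(2.iii)$ are not literally finitely many relations but an (in general infinite) family of closed linear conditions, each involving finitely many points; since each such condition is preserved under $C^\infty_{\mathrm{loc}}$-limits, the conclusion stands.
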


\begin{proof}
We consider the space
$$W:=C^\infty(\aL^*_\C \times K/M, \E_{\tau|_M}) \subset C^\infty(\aL^*_\C \times K, E_{\tau})$$
with its natural Fréchet topology.
As the space of smooth sections of a vector bundle, it is also a Montel space.
We have a continuous injection $V_{r,N} \hookrightarrow W$.
Thus $B$, viewed as a subspace of $W$, is relatively compact.\\
Now let $(f_n)$ be a sequence in $B$. We have to show that it has a subsequence $(f_{n_l})$ that converges in $V_{r,N+1}.$

By the above remarks, $(f_n)$ has a subsequence $(f_{n_l})$ that converges in $W$ to some $f\in W$, i.e. $(f_{n_l})$ and all its derivatives converge uniformly on sets of the form $Q \times K \subset \aL^*_\C \times K$, where $Q \subset \aL^*_\C$ is compact.
This implies that $f$ is holomorphic in $\lambda$ and satisfies the $(r,N)$-growth condition as well as Delorme's intertwining conditions, i.e. $f \in V_{r,N} \subset V_{r,N+1}$.\\
However, $(f_{n_l})$ may not converge to $f$ in the $(r,N)$-topology.
Nevertheless, we will argue that it converges in the $(r,N+1)$-topology.
Let $g\in V_{r,N}$ and $\alpha$ be a multi-index.
Then for any $R >0$, we have
\begin{equation} \label{eq:maxg}
\|g\|_{r,N+1,\alpha} \leq \max\Big(\|l_{Y_\alpha} g\|_{\overline{B}_R(0) \times K, \infty} \; , \frac{1}{1+R^2} \|g\|_{r,N,\alpha}\Big),
\end{equation}
where $\overline{B}_R(0)$ denotes the closed ball of radius $R$ in $\aL^*_\C$.
Let $$C:= \sup_{g \in B} \|g\|_{r,N,\alpha} < \infty.$$
Then, also $\|f\|_{r,N,\alpha} \leq C$ and thus
$\|f_{n_l}-f\|_{r,N,\alpha} \leq 2C.$\\
Let $\epsilon >0$. Choose $R$ large enough such that $\frac{2C}{1+R^2} \leq \epsilon$ and $l_0$ large enough such that
\begin{eqnarray*}
\|l_{Y_\alpha}(f_{n_l}-f)\|_{r,\overline{B}_R(0) \times K, \infty} \leq \epsilon \;\;\; \text{ for } l\geq l_0.
\end{eqnarray*}
This is possible since $f_n \rightarrow f$ in $W$.
Now \eqref{eq:maxg}, shows that $\|f_{n_l}-f\|_{r,N+1,\alpha} \stackrel{l \rightarrow \infty}{\longrightarrow} 0$. 
Hence, $f_{n_l} \rightarrow f$ in $V_{r,N+1}$.
\end{proof}

\begin{thm} \label{thm:closure}
Assume that Hyp.~\ref{thm:estimate} is true.
Then, $D \in~\DD_G(\E_\gamma, \E_\tau)$ has closed range in $C^\infty(X, \E_\tau)$.
\end{thm}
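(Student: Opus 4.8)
The plan is to use the reductions already set up above: by the closed-range criterion it suffices to show that $B'\cap\Impart(D^t)$ is weak-* closed in $C^{-\infty}_c(X,\E_{\tilde\gamma})$ for every weak-* bounded and weak-* closed subset $B'$. I would first transport the situation to the Paley--Wiener--Schwartz side: by Thm.~\ref{thm:PWSsect} the maps $\mathcal{F}_{\tilde\tau}$ and $\mathcal{F}_{\tilde\gamma}$ are topological isomorphisms, under which (by \eqref{eq:PolyPWS} and Prop.~\ref{prop:1}) the operator $D^t$ becomes multiplication by the polynomial matrix $P$; hence $\mathcal{F}_{\tilde\gamma}(B'\cap\Impart(D^t))=\mathcal{B}\cap\Impart(P)$ with $\mathcal{B}:=\mathcal{F}_{\tilde\gamma}(B')\subset PWS_{\tilde\gamma}(\aL^*_\C\times K/M)$, and it is enough to prove that $\mathcal{B}\cap\Impart(P)$ is weak-* closed. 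Since $C^\infty(X,\E_\gamma)$ is Fréchet, hence barrelled, $B'$ is equicontinuous and in particular strongly bounded, so $\mathcal{B}$ is bounded in $PWS_{\tilde\gamma}(\aL^*_\C\times K/M)$; by regularity of this inductive limit of Fréchet spaces (via $\mathcal{F}_{\tilde\gamma}$ it is the Montel space $C^{-\infty}_c(X,\E_{\tilde\gamma})$), there are $r\ge 0$ and $N\in\N_0$ with $\mathcal{B}\subset V_{r,N}:=PWS_{\tilde\gamma,r,N}(\aL^*_\C\times K/M)$ and $\sup_{w\in\mathcal{B}}\|w\|_{r,N,\beta}<\infty$ for every multi-index $\beta$.

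The core step is to produce, uniformly over $\mathcal{B}$, bounded preimages under $P$, and this is exactly what Hyp.~\ref{thm:estimate} provides. Given $w\in\mathcal{B}\cap\Impart(P)$, write $w=Pu$ with $u\in PWS_{\tilde\tau}(\aL^*_\C\times K/M)\subset PWS_{\tilde\tau,H}(\aL^*_\C\times K/M)$. Since $\|Pu\|_{r,N,\beta}=\|w\|_{r,N,\beta}<\infty$ for all $\beta$, Hyp.~\ref{thm:estimate} yields $v_w\in PWS_{\tilde\tau}(\aL^*_\C\times K/M)$ with $Pv_w=w$ and $\|v_w\|_{r,N+M,\alpha}\le C_{r,N,\alpha}\sum_{|\beta|\le|\alpha|+k}\|w\|_{r,N,\beta}$; as $\mathcal{B}$ is bounded in $V_{r,N}$, the right-hand side is uniformly bounded over $w\in\mathcal{B}$. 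Thus $\{v_w:w\in\mathcal{B}\cap\Impart(P)\}$ is bounded in the Fréchet space $PWS_{\tilde\tau,r,N+M}(\aL^*_\C\times K/M)$, so by Lem.~\ref{lem:compactness} it is relatively compact in $PWS_{\tilde\tau,r,N+M+1}(\aL^*_\C\times K/M)$. As multiplication by $P$ is continuous from this space into $V_{r,N'}$ for $N':=N+M+1+\deg P$ (it is $\mathcal{F}_{\tilde\gamma}\circ D^t\circ\mathcal{F}_{\tilde\tau}^{-1}$ restricted to a step), the set $\mathcal{B}\cap\Impart(P)=\{Pv_w\}$ is relatively compact in $V_{r,N'}$.

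It remains to check that $\mathcal{B}\cap\Impart(P)$ is closed in $V_{r,N'}$ and then to unwind the topologies. If $w_n\in\mathcal{B}\cap\Impart(P)$ with $w_n\to z$ in $V_{r,N'}$, then using Lem.~\ref{lem:compactness} and metrizability I would pass to a subsequence with $v_{w_{n_l}}\to v$ in $PWS_{\tilde\tau,r,N+M+1}(\aL^*_\C\times K/M)$; continuity of $P$ gives $z=Pv\in\Impart(P)$, while convergence in $V_{r,N'}$ implies weak-* convergence (the step $V_{r,N'}\hookrightarrow PWS_{\tilde\gamma}(\aL^*_\C\times K/M)$ is continuous, so every weak-* continuous functional is continuous on $V_{r,N'}$), so $z$ lies in the weak-* closure of $\mathcal{B}$, which is $\mathcal{B}$ because $B'$ is weak-* closed. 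Hence $\mathcal{B}\cap\Impart(P)$ is compact in $V_{r,N'}$, therefore compact and thus closed in the coarser Hausdorff weak-* topology; pulling back by $\mathcal{F}_{\tilde\gamma}$, $B'\cap\Impart(D^t)$ is weak-* closed, and by the closed-range criterion $D$ has closed range in $C^\infty(X,\E_\tau)$. The main obstacle in this scheme is the soft but delicate functional-analytic bookkeeping on these last lines --- that bounded sets live in a single Fréchet step (regularity of the inductive limit) and that the weak-* topology is comparable with that step's topology --- together with tracking how the order index is shifted (by $M$ through Hyp.~\ref{thm:estimate}, by $1$ through Lem.~\ref{lem:compactness}, by $\deg P$ through $P$); the genuine analytic input sits entirely in Hyp.~\ref{thm:estimate} and Lem.~\ref{lem:compactness}.
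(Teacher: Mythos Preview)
Your argument is correct and follows essentially the same route as the paper: reduce via the closed-range criterion to showing $B'\cap\Impart(D^t)$ is weak-* closed, transport to the Paley--Wiener--Schwartz side, use regularity of the inductive limit to land $\mathcal{B}$ in a single step $V_{r,N}$, invoke Hyp.~\ref{thm:estimate} to produce controlled preimages, and then combine Lem.~\ref{lem:compactness} with continuity of $P$ to obtain compactness (hence weak-* closedness) of $\mathcal{B}\cap\Impart(P)$. The only organisational difference is that the paper first proves separately that $\Impart(P)\cap V_{r,N}$ is closed in the $(r,N)$-topology (via a Cauchy-sequence argument from Hyp.~\ref{thm:estimate}) and applies Lem.~\ref{lem:compactness} to $\mathcal{B}$ itself rather than to the set of preimages $\{v_w\}$, but the analytic content is identical.
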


\begin{proof}
We consider $P=\prescript{}{\tilde{\tau}}{\mathcal{F}}_{\tilde{\gamma}}(D^t)$ as an operator
$P: PWS_{\tilde{\tau}}(\aL^*_\C \times K/M) \longrightarrow PWS_{\tilde{\gamma}}(\aL^*_\C \times K/M).$
Using Hyp.~\ref{thm:estimate}, we first show that for every $N \in \N_0$ and $r \geq 0$
$$ \Impart(P) \cap PWS_{\tilde{\gamma},r,N}$$
is closed in the $(r,N)$-topology.\\
In fact, consider a sequence $w_n:=Pu_n$ in $\Impart(P) \cap PWS_{\tilde{\gamma},r,N}$ so that $w_n$ converges to $w\in PWS_{\tilde{\gamma},r,N}$, whenever $n$ tends to $\infty$.
Then $(w_n)$ is a Cauchy sequence in $PWS_{\tilde{\gamma},r,N}$.\\
By Hyp.~\ref{thm:estimate}, there exists a sequence $v_n \in PWS_{\tilde{\tau},r,N+M}$ so that
$Pv_n=w_n$ and with estimate $(ii)$ in Hyp.~\ref{thm:estimate}.
This implies that $(v_n)$ is a Cauchy-sequence in $PWS_{\tilde{\tau},r,N+M}$. 
Since,
$PWS_{\tilde{\tau},r,N+M}$ is complete, there exists $v\in PWS_{\tilde{\tau},r,N+M}$ such that
$v_n \stackrel{n \rightarrow \infty}{\longrightarrow} v$ in $PWS_{\tilde{\tau},r,N+M}$.\\
Furthermore, since $P: PWS_{\tilde{\tau},r,N+M} \longrightarrow PWS_{\tilde{\gamma},r,N+M+\alpha}$ is continuous, for $2d \geq \text{deg}(P)$, $w_n=Pv_n$ converges to $Pv$ in $PWS_{\tilde{\gamma},r,N+M+\alpha}$.
But $w_n \longrightarrow w$ also in $PWS_{\tilde{\gamma},r,N+M+ \alpha}$.
Hence, we have that $Pv=w$, i.e., $w\in \Impart(P) \cap PWS_{\tilde{\gamma},r,N}$. 
We conclude that $\Impart(P) \cap PWS_{\tilde{\gamma},r,N}$ is closed in the $(r,N)$-topology.

In order to show the theorem, we need \eqref{eq:new}.
Let $B' \subset C^{-\infty}_c(X, \E_{\tilde{\gamma}})$ be bounded and weak-* closed.
$B'$ is also strongly bounded (this holds in the dual of any Fréchet space, \cite[Thm. 2.17 (ii)]{Helgason2} and of course also strongly closed.
Since $C^{-\infty}_c(X,\E_{\tilde{\gamma}}),$ as the dual of a Montel space, is itself Montel in the strong dual topology, we conclude that $B'_\beta$, i.e.  $B'$ equipped with the strong dual topology, is compact.\\
Now, assume, for a moment, that $B'_\beta \cap \Impart(D^t)$ is closed, then it is compact.
Since the identity map
$$B'_\beta \longrightarrow B'_\sigma$$
is continuous, we conclude that $B'_\sigma \cap \Impart(D^t)$ is compact, i.e. $B' \cap \Impart(D^t)$ is compact in the weak-* topology, in particular it is weak-* closed.\\
Thus, it suffices to show that $B' \cap \Impart(D^t)$ is closed in the strong dual topology, for every strongly bounded and closed $B'$.

We will now apply the Fourier transform and obtain
$$\mathcal{F}(B' \cap \Impart(D^t)) = \mathcal{F}(B') \cap \Impart(P).$$
By the continuity of $\mathcal{F}$ we have that
$$\mathcal{F}(B') \subset  PWS_{\tilde{\gamma}}(\aL^*_\C \times K/M)$$
is compact, in particular closed.
By the definition of the locally convex inductive limit topology, we see that $\mathcal{F}(B') \cap PWS_{\tilde{\gamma},r,N}$ is closed in every $ PWS_{\tilde{\gamma},r,N}$ with respect to $(r,N)$-topology.\\
On the other hand, 
there exist $r \geq 0$ and $N_0\in \N_0$ such that $\mathcal{F}(B') \subset PWS_{\tilde{\gamma},r,N_0}$ and
$\mathcal{F}(B')$ is bounded in the $(r,N_0)$-topology (see the proof of Lem.~3 in \cite{PalmirottaPWS}).
By Lem.~\ref{lem:compactness}
$$\mathcal{F}(B') \subset PWS_{\tilde{\gamma},r,N_0+1}$$
is relatively compact.
But
$\mathcal{F}(B') = \mathcal{F}(B') \cap PWS_{\tilde{\gamma},r,N_0+1}$
is closed in the $(r,N_0+~1)$-topology.
Hence, it is compact in the $(r,N_0+1)$-topology.\\
Now let $N:=N_0+1$.
By the beginning of the proof, $\Impart(P) \cap PWS_{\tilde{\gamma},r,N}$ is closed in the $(r,N)$-topology.
It follows that
$$\mathcal{F}(B') \cap \Impart(P)  = \Impart(P)  \cap PWS_{\tilde{\gamma},r,N} \cap \mathcal{F}(B')$$ 
is compact in the $(r,N)$-topology.
Since the injection 
$$PWS_{\tilde{\gamma},r,N} \hookrightarrow PWS_{\tilde{\gamma}}(\aL^*_\C \times K/M) $$
is continuous, we conclude that $\mathcal{F}(B') \cap \Impart(P)$ is compact, in particular closed, in $PWS_{\tilde{\gamma}}(\aL^*_\C \times K/M) $.
By applying the inverse Fourier transform, which is a topological isomorphism by Thm.~\ref{thm:PWSsect}, we find that $B' \cap \Impart(D^t)$ is strongly closed in $C^{-\infty}_c(X,\E_{\tilde{\gamma}})$. The theorem follows.
\end{proof}

\noindent
Eventually, we can state our main criterion for solvability on $X$, i.e. for the validity of Conj.~\ref{conj:1}.

\begin{thm} \label{thm:localsolv}
Let $D: C^\infty(X, \E_\gamma) \longrightarrow C^\infty(X, \E_\tau)$.
Assume that $D$ statisfies Hyp.~\ref{lem:PQ0L3} and Hyp.~\ref{thm:estimate}.
Then Conj.~\ref{conj:1} is true for $D$.
This holds in particular, if $D$ satisfies Hyp.~\ref{lem:PQ0L3}, Hyp.~\ref{thm:estimateL3} and Hyp.~\ref{hyp:constants}.
\end{thm}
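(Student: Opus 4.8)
The plan is to assemble the two halves already prepared above — density and closedness of the range of $D$ — and conclude by an elementary functional-analytic observation. Conjecture~\ref{conj:1} asks for $\Impart(D)=\Ker(\widetilde D)$ inside $C^\infty(X,\E_\tau)$. Since $\widetilde D\circ D=0$, the inclusion $\Impart(D)\subseteq\Ker(\widetilde D)$ is automatic, and $\Ker(\widetilde D)$ is a closed subspace of the Fréchet space $C^\infty(X,\E_\tau)$ because $\widetilde D$ is continuous. Hence it suffices to show that $\Impart(D)$ is (a) dense in $\Ker(\widetilde D)$ and (b) closed in $C^\infty(X,\E_\tau)$; then $\Impart(D)$ is a subspace of $\Ker(\widetilde D)$ which is simultaneously dense and closed, so the two coincide, which is precisely the exactness in the middle of \eqref{eq:exact}.

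For (b) I would simply invoke Theorem~\ref{thm:closure}: under Hyp.~\ref{thm:estimate} the operator $D$ has closed range in $C^\infty(X,\E_\tau)$. For (a) I would invoke Proposition~\ref{prop:densityrange}, which produces density of $\Impart(D)$ in $\Ker(\widetilde D)$ from Hyp.~\ref{lem:PQ0L3} together with Hyp.~\ref{thm:estimateL3} (its proof running through Cor.~\ref{cor:densityseq} and the Paley--Wiener--Schwartz isomorphism Thm.~\ref{thm:PWSsect}, approximating elements of $\Ker(P)$ by finite Fourier series and solving $Q\,\prescript{}{\mu}{g}_i=\prescript{}{\mu}{f}_i$ one $K$-type at a time). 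The one point requiring care is that the hypotheses of Theorem~\ref{thm:localsolv} only provide the Level-2 estimate Hyp.~\ref{thm:estimate}, whereas Proposition~\ref{prop:densityrange} is stated with the Level-3 estimate Hyp.~\ref{thm:estimateL3}. I would bridge this by noting that the polynomial multiplication operators $P$ and $Q$ commute with the left $K$-action on the $K/M$-variable (cf.\ Prop.~\ref{prop:1}) and therefore preserve $K$-isotypic components: embedding $\prescript{}{\mu}{PWS}_{*,H}(\aL^*_\C)$ $K$-equivariantly into $PWS_{*,H}(\aL^*_\C\times K/M)$ as the $\mu$-isotypic part, applying Hyp.~\ref{thm:estimate} there, and projecting back onto the $\mu$-component yields Hyp.~\ref{thm:estimateL3} with $\mu$-dependent constants, which is all Proposition~\ref{prop:densityrange} uses. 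Equivalently, one may re-run the finite-Fourier-series argument of Proposition~\ref{prop:densityrange} verbatim at Level~2, using Hyp.~\ref{thm:estimate} in place of Hyp.~\ref{thm:estimateL3}.

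For the final ``in particular'' clause I would quote Theorem~\ref{thm:Hyp4.4}: if $D$ satisfies Hyp.~\ref{lem:PQ0L3}, Hyp.~\ref{thm:estimateL3} and Hyp.~\ref{hyp:constants}, then Hyp.~\ref{thm:estimate} holds as well, so $D$ falls under the first assertion and Conjecture~\ref{conj:1} follows. The main obstacle is not in the assembly itself — which is short — but in the bookkeeping of the Level-2 to Level-3 passage in part (a): one must check that the $\mu$-dependent factors generated by the isotypic embedding and projection stay finite, and, crucially, that the solution produced by Hyp.~\ref{thm:estimate} actually lands in the genuine polynomially bounded space $PWS_{\tilde\tau}(\aL^*_\C\times K/M)$ rather than only in its enlargement $PWS_{\tilde\tau,H}$, so that Proposition~\ref{prop:densityrange} can be fed with exactly what it needs. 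Everything else reduces to direct citations of the results already established.
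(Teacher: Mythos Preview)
Your proposal is correct and follows essentially the same route as the paper: invoke Prop.~\ref{prop:densityrange} and Thm.~\ref{thm:closure}, then Thm.~\ref{thm:Hyp4.4} for the ``in particular'' clause. The paper handles your Level-2/Level-3 bridging point with the single remark ``Hyp.~\ref{thm:estimate} implies Hyp.~\ref{thm:estimateL3}'', which is exactly the $K$-isotypic embedding argument you spell out.
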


\begin{proof}
Note that Hyp.~\ref{thm:estimate} implies Hyp.~\ref{thm:estimateL3}. 
Vice versa, Thm.~\ref{thm:Hyp4.4} says that 
Hyp.~\ref{thm:estimateL3} and  Hyp.~\ref{hyp:constants} implies Hyp.~\ref{thm:estimate}.
The result now follows from Prop.~\ref{prop:densityrange} and Thm.~\ref{thm:closure}.
\end{proof}


\subsection{Variants of Conjecture~\ref{conj:1}} \label{sect:variants}
Let $D \in \DD_G(\E_\gamma, \E_\tau)$ with corresponding `integrability condition' given by some $\widetilde{D} \in \DD_G(\E_\tau, \E_\delta)$ as in Subsect.~\ref{subsect:Conj}.
One is not only interested in solving equations of the form $Df=g$ on $X$ but also on subsets of $X$ or under additional conditions on $f$ and $g$.
In this subsection, we want to discuss some of these problems which can also be treated by our Hyps. \ref{lem:PQ0L3}-\ref{thm:estimate}.
For this, let us introduce the following notions.
\begin{defn} \label{defn:variants}
\begin{itemize}
\item[(a)] We say that $D$ is solvable on $X$ if Conj.~\ref{conj:1} is true for $D$, i.e. \eqref{eq:exact} is exact.
\item[(b)] We say that $D$ is solvable on $K$-finite sections, if the sequence of $K$-finite elements
$$C^\infty(X, \E_\gamma)_K \stackrel{D}{\longrightarrow} C^\infty(X, \E_\tau)_K \stackrel{\tilde{D}}{\longrightarrow}
C^\infty(X, \E_\delta)_K$$
is exact.
\item[(c)] We say that $D$ is solvable on balls if for all $R>0$ and $x_0 \in X$, the sequence
$$C^\infty(B_R(x_o), \E_\gamma) \stackrel{D}{\longrightarrow} C^\infty(B_R(x_o), \E_\tau) \stackrel{\tilde{D}}{\longrightarrow}
C^\infty(B_R(x_o), \E_\delta)$$
is exact.
\item[(d)] Let $\mathcal{E}_*, *=\gamma, \tau, \delta$, denote the sheaf of smooth sections of $\E_*$ on $X$.
We say that $D$ is locally solvable if the sequence of sheaves
$$ \mathcal{E}_\gamma \stackrel{D}{\longrightarrow} \mathcal{E}_\tau \stackrel{\tilde{D}}{\longrightarrow} \mathcal{E}_\delta$$
is exact, i.e. it is exact at the level of germs.
\end{itemize}
\end{defn}

It is clear that solvability on $X$ implies solvability on $K$-finite sections.
Moreover, solvability on balls implies local solvability.
In the latter two cases, by $G$-invariance, it suffices to check exactness on balls centered at $x_0=o$ and on germs at $x_0=o$, respectively.
Note that in the literature sometimes appear slightly different notions of local solvability.

Our first observation is that the same hypotheses that imply solvability on $X$ also imply solvability on balls and therefore local solvability.

\begin{prop} \label{prop:1}
Assume that $D \in \DD_G(\E_\gamma, \E_\tau)$ satisfies Hyp.~\ref{lem:PQ0L3} and Hyp.~\ref{thm:estimate} (which is the case if it satisfies Hyp.~\ref{lem:PQ0L3}, \ref{thm:estimateL3} and \ref{hyp:constants}).
Then, $D$ is solvable on balls, and hence also locally solvable.
\end{prop}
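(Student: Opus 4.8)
The plan is to reduce the statement about balls $B_R(x_o)$ to the global statement on $X$ that was already obtained (Thm.~\ref{thm:localsolv}) by exploiting the fact that the Paley-Wiener-Schwartz description is local in the support parameter $r$. First I would note that by $G$-invariance of $D$ and $\widetilde{D}$ it suffices to treat balls $B_r(o)$ centered at the origin. The key point is that the entire machinery developed in Sections~\ref{sect:Solv} works with the filtered pieces $C^{-\infty}_r(X,\E_{\tilde *})$ and $PWS_{\tilde *,r,N}(\aL^*_\C \times K/M)$, and the estimates in Hyp.~\ref{thm:estimate} are uniform in the sense that the same $M$, $k$ and the same constants $C_{r,N,\alpha}$ (for fixed $r$) govern the solvability; the radius $r$ enters only through the weight $e^{r|\Repart(\lambda)|}$ and is \emph{not} increased by the construction of $v$ from $u$.

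Concretely, I would set up the dual picture: solvability of $D$ on $B_r(o)$ is, via Hahn-Banach and the description of the dual of $C^\infty(B_r(o),\E_\tau)$ as distributions supported in $\overline{B}_r(o)$, equivalent to the statement that the transposed sequence
\[
C^{-\infty}_{\overline{B}_r(o)}(X,\E_{\tilde\delta}) \xrightarrow{\widetilde{D}^t} C^{-\infty}_{\overline{B}_r(o)}(X,\E_{\tilde\tau}) \xrightarrow{D^t} C^{-\infty}_{\overline{B}_r(o)}(X,\E_{\tilde\gamma})
\]
has the appropriate exactness and closed-range properties, where now all supports are confined to $\overline{B}_r(o)$. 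Here one must be slightly careful: the dual of $C^\infty$ on an \emph{open} ball consists of distributions with compact support contained in the open ball, so strictly one works with a union over $r'<r$; since all the constructions are continuous in the inductive limit this causes no trouble, and I would phrase the argument so that it is enough to solve at each fixed radius with control of the radius. Applying $\mathcal{F}_{\tilde *}$ transports this to the sequence $Q, P$ acting on $PWS_{\tilde *,r}$, i.e. on the subspaces of the full Paley-Wiener-Schwartz spaces cut out by the single exponential-type bound with parameter $r$.

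Now I would run exactly the proofs of Prop.~\ref{prop:densityrange} and Thm.~\ref{thm:closure}, but keeping track of $r$. For density: in the proof of Prop.~\ref{prop:densityrange} one approximates $f\in\Ker(P)$ by finite Fourier sums $f_n$ and solves $Q\,{}_\mu g_i = {}_\mu f_i$ using Hyp.~\ref{lem:PQ0L3} and Hyp.~\ref{thm:estimateL3}; since Hyp.~\ref{thm:estimateL3} produces ${}_\mu v$ with a bound involving $\|\cdot\|_{r,N}$ for the \emph{same} $r$, the solutions stay in $PWS_{\tilde\delta,r}$, so the approximation takes place inside the $r$-confined space and density of $\Impart(Q)\cap PWS_{\tilde\tau,r}$ in $\Ker(P)\cap PWS_{\tilde\tau,r}$ follows. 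For closed range: the proof of Thm.~\ref{thm:closure} already argues at the level of fixed $(r,N)$-pieces — it shows $\Impart(P)\cap PWS_{\tilde\gamma,r,N}$ is closed in the $(r,N)$-topology and combines this with the compactness Lem.~\ref{lem:compactness} and the closed-range criterion — and none of these steps raises $r$. Hence the same argument shows that $D^t$ restricted to distributions supported in $\overline{B}_r(o)$ has weak-$*$ closed range, which by the closed-range theorem gives closedness of $\Impart(D)$ in $C^\infty(B_r(o),\E_\tau)$. Combining closedness and density yields exactness on $B_r(o)$, and letting $R$ and $x_o$ vary gives solvability on balls; local solvability is then immediate since germs at $o$ are obtained from sections on small balls $B_R(o)$.

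The main obstacle, and the part needing genuine care rather than routine repetition, is the bookkeeping of the support radius under dualization: one must check that the dual of $C^\infty(B_R(o),\E_\tau)$ really is realized inside $\bigcup_{r<R}C^{-\infty}_r(X,\E_{\tilde\tau})$ and that, conversely, solving the transposed equation within this union of $r$-confined spaces (with the radius-preserving estimates of Hyp.~\ref{thm:estimate}) does yield a genuine section on the \emph{open} ball rather than only on a slightly smaller ball. This is a soft functional-analytic point — continuity of the inductive limit and the fact that the estimates in Hyp.~\ref{thm:estimate} and Thm.~\ref{thm:estimateL3} do not enlarge $r$ — but it is the one place where the proof differs from a verbatim rerun of the global case, so I would state it carefully, perhaps as a short lemma, before invoking the arguments of Prop.~\ref{prop:densityrange} and Thm.~\ref{thm:closure}.
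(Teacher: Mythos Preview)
Your proposal is correct and follows essentially the same approach as the paper: identify the dual of $C^\infty(B_R(o),\E_*)$ with the inductive limit $\bigcup_{0\le r<R} C^{-\infty}_r(X,\E_{\tilde *})$, transport via the Fourier transform to $\bigcup_{0\le r<R} PWS_{\tilde *,r}$ using Thm.~\ref{thm:PWSsect}, and then rerun the proofs of Prop.~\ref{prop:densityrange} and Thm.~\ref{thm:closure} with the inductive limit taken over $r<R$ instead of $r<\infty$. The point you single out as the ``main obstacle'' --- that the dual is $\bigcup_{r<R}$ rather than the space at a single closed radius, and that the estimates of Hyp.~\ref{thm:estimate} do not enlarge $r$ --- is exactly the modification the paper highlights before invoking the earlier arguments verbatim.
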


\begin{proof}
We have to show that 
$$C^\infty(B_R(o), \E_\gamma) \stackrel{D}{\longrightarrow} C^\infty(B_R(o), \E_\tau) \stackrel{\tilde{D}}{\longrightarrow}
C^\infty(B_R(o), \E_\delta)$$
is exact.
Note that the dual spaces
$C^{-\infty}_c(B_R(o), \E_*), *=\gamma, \tau, \delta$, are given by $\bigcup_{0 \leq r < R} C^{-\infty}_r(X, \E_*)$.
By the first part of the Paley-Wiener-Schwartz Thm.~\ref{thm:PWSsect} (a), the Fourier transform is a linear isomorphism between $C^{-\infty}_r (X, \E_*)$ and $PWS_{*,r}(\aL^*_\C \times K/M).$
Taking now the limit over all $0 \leq r < R$ (instead of $0 \leq r < \infty$) one shows as in \cite[Prop.~8]{PalmirottaPWS} that one gets a topological isomorphism
$$C^{-\infty}_c(B_R(o), \E_*) \cong \bigcup_{0 \leq r < R} PWS_{*,r}(\aL^*_\C \times K/M),$$
whose left hand side carries the strong dual topology, and the right hand side is equipped with the locally convex inductive limit topology.
With this change one can argue as in the proof of Prop.~\ref{prop:densityrange} and Thm.~\ref{thm:closure} to establish Prop.~\ref{prop:1}.
\end{proof}

Our second point is that for $K$-finite solvability, it suffices to establish Hyp.~\ref{lem:PQ0L3} and Hyp.~\ref{thm:estimateL3}.

\begin{prop} \label{prop:2}
Assume that $D \in \DD_G(\E_\gamma, \E_\tau)$ satisfies Hyp.~\ref{lem:PQ0L3} and Hyp.~\ref{thm:estimateL3}.
Then, $D$ is solvable on $K$-finite sections.
\end{prop}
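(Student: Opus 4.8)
The plan is to dualize, as in the proof of Prop.~\ref{prop:densityrange} and Thm.~\ref{thm:closure}, but this time only work with the \emph{algebraic} duals at the level of $K$-finite vectors, where closedness issues do not arise. Concretely, the sequence of $K$-finite sections
$$C^\infty(X,\E_\gamma)_K \stackrel{D}{\longrightarrow} C^\infty(X,\E_\tau)_K \stackrel{\widetilde{D}}{\longrightarrow} C^\infty(X,\E_\delta)_K$$
is a sequence of $(\g,K)$-modules. Its algebraic dual (restricted $K$-finite dual) is a sequence involving $C^{-\infty}_c(X,\E_{\tilde *})_K$, and by Frobenius reciprocity these $K$-finite distribution sections decompose, for each fixed irreducible $K$-type $\mu$, into copies of the multiplicity spaces $C^{-\infty}_c(G,\mu,\tilde\tau)$ etc. Applying the Fourier transform $\prescript{}{\mu}{\mathcal{F}}_{\tilde*}$ of Thm.~\ref{thm:PWSsect}(b) turns each $\mu$-isotypic piece into $\prescript{}{\mu}{PWS}_{\tilde*}(\aL^*_\C)$, and the operators $D^t,\widetilde{D}^t$ become multiplication by the polynomial matrices $P,Q$ from Prop.~\ref{prop:1}. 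So exactness of the $K$-finite sequence reduces, $\mu$ by $\mu$, to exactness of
$$\prescript{}{\mu}{PWS}_{\tilde\delta}(\aL^*_\C) \stackrel{Q}{\longrightarrow} \prescript{}{\mu}{PWS}_{\tilde\tau}(\aL^*_\C) \stackrel{P}{\longrightarrow} \prescript{}{\mu}{PWS}_{\tilde\gamma}(\aL^*_\C)$$
in the middle — that is, $\Impart(Q) = \Ker(P)$ inside $\prescript{}{\mu}{PWS}_{\tilde\tau}(\aL^*_\C)$.

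The key point is that this last equality follows directly from the two hypotheses we are allowed to assume. Given $f \in \prescript{}{\mu}{PWS}_{\tilde\tau}(\aL^*_\C)$ with $Pf=0$: since $\prescript{}{\mu}{PWS}_{\tilde\tau}(\aL^*_\C) \subset \prescript{}{\mu}{PWS}_{\tilde\tau,H}(\aL^*_\C)$, Hyp.~\ref{lem:PQ0L3} produces $\tilde g \in \prescript{}{\mu}{PWS}_{\tilde\delta,H}(\aL^*_\C)$ with $Q\tilde g = f$. This $\tilde g$ lies only in the ``$H$''-space (holomorphic plus intertwining conditions, but without the Paley-Wiener growth bound), so it need not come from a genuine compactly supported distribution. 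Here Hyp.~\ref{thm:estimateL3}, applied with the operator $Q$ in place of $P$, upgrades the solution: from $f=Q\tilde g$ with $\|f\|_{r,N}<\infty$ (which holds because $f$ is honestly in the Paley-Wiener-Schwartz space, hence of polynomial-times-exponential growth for suitable $r,N$) we obtain $g \in \prescript{}{\mu}{PWS}_{\tilde\delta}(\aL^*_\C)$ with $Qg = Q\tilde g = f$. Thus $f \in \Impart(Q)$, as desired. The reverse inclusion $\Impart(Q)\subset\Ker(P)$ is just $P\circ Q = 0$, which reflects $\widetilde{D}\circ D = 0$ at the polynomial level. Finally one uses the duality: exactness of the Fourier-transformed dual sequence in the middle, combined with the fact that for $(\g,K)$-modules a complex is exact iff its restricted dual complex is exact (a purely algebraic statement, no topology needed), yields exactness of the original $K$-finite sequence.

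I expect the only genuinely delicate point to be the bookkeeping in the first paragraph: making precise that the restricted dual of $C^\infty(X,\E_\tau)_K$, decomposed into $K$-types, is the algebraic direct sum $\bigoplus_\mu \prescript{}{\mu}{\mathcal{F}}^{-1}(\prescript{}{\mu}{PWS}_{\tilde\tau}(\aL^*_\C))\otimes E_\mu$ (suitably interpreted), and that $D$ respectively $\widetilde{D}$ act $K$-type by $K$-type as the polynomial matrices $P,Q$ coming from Prop.~\ref{prop:1} — i.e. that fixing the $K$-type $\mu$ on the ``source'' side really does decouple the sequence into the Level~3 sequences indexed by $\mu$. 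Once that identification is in place, the rest is a formal combination of Hyp.~\ref{lem:PQ0L3}, Hyp.~\ref{thm:estimateL3}, and the elementary duality facts from Lem.~\ref{lem:FA} (here in their purely algebraic, topology-free form). No closed-range argument, and in particular no analogue of Lem.~\ref{lem:compactness} or Thm.~\ref{thm:closure}, is needed, which is exactly why Hyp.~\ref{thm:estimate} and Hyp.~\ref{hyp:constants} can be dispensed with for this weaker conclusion.
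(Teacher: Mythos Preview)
Your reduction to Level~3 and the direct verification that $\Impart(Q)=\Ker(P)$ inside $\prescript{}{\mu}{PWS}_{\tilde\tau}(\aL^*_\C)$ (via Hyp.~\ref{lem:PQ0L3} followed by Hyp.~\ref{thm:estimateL3} applied to $Q$) are correct, and this is exactly what the paper's density argument (Prop.~\ref{prop:densityrange}) collapses to once one works at a fixed $K$-type. The gap is in the final duality step. The spaces $C^{-\infty}_c(G,\mu,\tilde *)$ are the \emph{topological} duals of the Fr\'echet spaces $C^\infty(G,\mu,*)$, not their algebraic duals, and for each fixed $\mu$ these multiplicity spaces are still infinite-dimensional. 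So the purely algebraic fact ``a complex of vector spaces is exact iff its algebraic dual is exact'' does not apply. What middle exactness of the transposed sequence actually buys you, via Lem.~\ref{lem:FA}, is only $\Ker(\widetilde{D})=\overline{\Impart(D)}$ in $C^\infty(G,\mu,\tau)$, i.e.\ density --- not equality.

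To upgrade density to equality you must still show that $D$ has closed range in $C^\infty(G,\mu,\tau)$, and for that one runs the argument of Thm.~\ref{thm:closure} at Level~3: Hyp.~\ref{thm:estimateL3}, now applied to $P$ (not $Q$), gives that $\Impart(P)\cap\prescript{}{\mu}{PWS}_{\tilde\gamma,r,N}$ is closed in each $(r,N)$-topology, and then the compactness Lemma~\ref{lem:compactness} and the weak-$*$ closed-range criterion finish as before. This is precisely what the paper means by ``argue as in the proofs of Prop.~\ref{prop:densityrange} and Thm.~\ref{thm:closure}, where we can work completely at Level~3''. The point is not that closed-range arguments disappear, but that at Level~3 they require only Hyp.~\ref{thm:estimateL3} (no uniformity in $\mu$), which is why Hyp.~\ref{hyp:constants} and Hyp.~\ref{thm:estimate} can be dropped.
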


\begin{proof}
We have for $*=\gamma, \tau, \delta$
$$C^\infty(X, \E_*)_K = \bigoplus_{\mu \in \widehat{K}} C^\infty(G, \mu, *) \otimes E_\mu.$$
Note that we have the algebraic direct sum here, and that $D$ acts as $D \otimes \Id$ on the right hand side.
Therefore, it is sufficient to show that for every $\mu \in \widehat{K}$
$$C^\infty(G,\mu,\gamma) \stackrel{D}{\longrightarrow} C^\infty(G,\mu,\tau) \stackrel{\widetilde{D}}{\longrightarrow} C^\infty(G,\mu,\delta) $$
is exact.
For this we argue as in the proofs of Prop.~\ref{prop:densityrange} and Thm.~\ref{thm:closure}, where we can work completely at (\textcolor{blue}{Level 3}).
We only have to discuss the sequence
$$\prescript{}{\mu}{PWS}_{\tilde{\delta}}(\aL^*_\C) \stackrel{Q}{\longrightarrow} \prescript{}{\mu}{PWS}_{\tilde{\tau}}(\aL^*_\C) \stackrel{P}{\longrightarrow} \prescript{}{\mu}{PWS}_{\tilde{\gamma}}(\aL^*_\C).$$
We do not have to switch to (\textcolor{blue}{Level 2}).
Therefore, Hyp.~\ref{hyp:constants} or \ref{thm:estimate} is not needed.
\end{proof}

\section{On the solvability on the hyperbolic plane $\HH^2$} \label{sect:H2}
Let $G=\SL(2,\R)$ and $K=\SO(2)$, which is a maximal compact subgroup of $G$.
Then $$G/K \cong \HH^2 = \{z \in \C \;|\; \Impart(z) >0\},$$ where $G$ acts on $\HH^2$ by
$g \cdot z= \begin{pmatrix} a & b \\ c& d \end{pmatrix} \cdot z = \frac{az+b}{cz+d},$ for $g\in \SL(2,\R)$ and $z \in \HH^2.$
We have $\widehat{K} \cong \Z$, where for $m \in \Z$ the corresponding irreducible representation $(m, E_m)$ on $\E_m \cong \C$ is given by
$$\begin{pmatrix} 
\cos\theta & \sin\theta \\
-\sin\theta  &\cos\theta
\end{pmatrix} \mapsto e^{in\theta} \in \mathbf{U}(1).$$
In \cite[Sect.~4]{PalmirottaIntw}, we have described explicitly the intertwining conditions in (\textcolor{blue}{Level 3}) for $\SL(2,\R)$.
Note that $\Hom_M(E_n,E_m)$ and hence $\prescript{}{n}{PWS}_m(\aL^*_\C)$ vanish for $n \not\equiv m$ (mod 2).
If $n \equiv m$ (mod 2), we identify $\Hom_M(E_n,E_m)$ with $\C$.

\begin{thm}[Intertwining conditions in (\textcolor{blue}{Level 3}), \cite{PalmirottaIntw}, Def.~4 and Thm.~6] \label{thm:Meta3SL2R}
Let $n,m \in \Z$ such that $n \equiv m$ (mod 2).
Then, $\varphi \in \emph \Hol(\aL^*_\C,\emph \Hom_M(E_n,E_m))$ satisfies the intertwining  condition (3.iii) in Def.~\ref{def:PWSspace} if and only if there exists an even holomorphic function $h\in \emph \Hol(\lambda^2)$ such that 
\begin{equation} \label{eq:intcondLevel3}
\varphi(\lambda)= h(\lambda) \cdot q_{n,m}(\lambda), \;\;\; \lambda \in \aL^*_\C,
\end{equation}
where $q_{n,m}$ is the polynomial in $\lambda \in \aL^*_\C$ with values in $\emph \Hom_M(E_n,E_m)$ given by
\begin{equation} \label{eq:qnm}
q_{n,m}(\lambda):= \begin{cases}
1, & \text{ if } n=m,\\
(\lambda + \frac{|m|+1}{2})(\lambda + \frac{|m|+3}{2}) \cdots (\lambda + \frac{|n|-1}{2}), & \text{ if } |n|>|m| \text{ and same signs},\\
(\lambda - \frac{|n|+1}{2})(\lambda - \frac{|n|+3}{2})  \cdots (\lambda - \frac{|m|-1}{2}), & \text{ if } |n|<|m| \text{ and same signs},\\
(\lambda + \frac{|n|-1}{2})(\lambda + \frac{|n|-3}{2})  \cdots (\lambda - \frac{|m|-1}{2}), & \text{ else, with different signs.} \QEDA
\end{cases}
\end{equation} 
\end{thm}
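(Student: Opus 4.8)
The plan is to reduce the statement to the explicit description of the intertwining data for $\SL(2,\R)$ worked out in \cite{PalmirottaIntw}, and to organize the argument around the reducibility points of the principal series. First I would recall that, for $G=\SL(2,\R)$, the principal series representation $\pi_\lambda$ (with fixed $M$-type determined by the parity class mod $2$) is reducible precisely at the half-integer points $\lambda \in \frac{1}{2}\Z$ of appropriate parity, and that at each such point the intertwining datum is a single proper submodule (a discrete series or a finite-dimensional piece), giving exactly one linear relation on the germ of $\varphi$ at that point. Concretely, the space $\prescript{}{n}{PWS}_m(\aL^*_\C)$ is cut out inside $\Hol(\aL^*_\C,\Hom_M(E_n,E_m))$ by: (i) holomorphy (automatic), and (ii) the vanishing/Želobenko relations at those half-integers $\lambda_0$ that lie ``between'' the $K$-types $n$ and $m$ — i.e. the points where the finite-dimensional or discrete series subquotient of $\pi_{\lambda_0}$ contains the $K$-type $m$ but the map from the $K$-type $n$ into it is forced to vanish. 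One reads off from the Želobenko operator which derivatives of $\varphi$ at $\lambda_0$ must vanish; the upshot is that $\varphi$ must vanish to first order at each point $\lambda = \pm\frac{|m|+1}{2}, \pm\frac{|m|+3}{2}, \dots$ in the finite list appearing as the roots of $q_{n,m}$ in \eqref{eq:qnm}, and there are no further conditions.

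Granting this, the proof splits into two inclusions. For ``$\Leftarrow$'': if $\varphi(\lambda) = h(\lambda) q_{n,m}(\lambda)$ with $h$ even holomorphic, then $\varphi$ is holomorphic and vanishes at each root of $q_{n,m}$, so it satisfies all the intertwining relations; one must also check that the \emph{evenness} of $h$ — equivalently, the Weyl-group invariance $\varphi(-\lambda) = (\text{const})\,\varphi(\lambda)\cdot(\text{relation via }q_{n,m}(-\lambda))$ — is exactly the symmetry condition imposed by the Knapp--Stein operator relating $\pi_\lambda$ and $\pi_{-\lambda}$. Here one verifies the elementary identity relating $q_{n,m}(-\lambda)$ and $q_{n,m}(\lambda)$ in each of the four cases of \eqref{eq:qnm}, so that ``$h$ even'' is equivalent to the $W$-equivariance of $\varphi$. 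For ``$\Rightarrow$'': given $\varphi$ satisfying (3.iii), the vanishing conditions at the roots of $q_{n,m}$ let us factor $\varphi = q_{n,m}\cdot h$ with $h$ holomorphic (division of holomorphic functions by a polynomial with known zeros), and the Weyl-symmetry of $\varphi$ together with the identity for $q_{n,m}(-\lambda)$ forces $h$ to be even. One checks in each case of \eqref{eq:qnm} that the listed roots are simple and that $q_{n,m}$ has the right parity behavior, so that no spurious zero or pole is introduced in $h$.

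The main obstacle I anticipate is bookkeeping the four sign/size cases of \eqref{eq:qnm} consistently with the reducibility diagram of the $\SL(2,\R)$ principal series: one has to be careful about which reducibility points contribute a relation between the specific $K$-types $n$ and $m$ (as opposed to points where both $K$-types sit in the same subquotient and the relation is automatically satisfied, or points irrelevant to this $M$-type), and about the direction of the Želobenko shift (``$\lambda + \text{positive}$'' versus ``$\lambda - \text{positive}$''), which is precisely what distinguishes the ``same sign'' from the ``different sign'' cases. This is exactly the content of \cite[Def.~4 and Thm.~6]{PalmirottaIntw}, so the cleanest route is to cite that computation directly and then perform the two short factorization-and-parity arguments above; the analytic input (holomorphic division, identity theorem) is routine.
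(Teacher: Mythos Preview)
The paper does not prove this theorem here: the statement is imported from the companion paper \cite{PalmirottaIntw} and closes with \QEDA, with no proof given in the present article. Your proposal ultimately arrives at the same conclusion --- ``the cleanest route is to cite that computation directly'' --- which is exactly what the paper does.

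Your sketch of the underlying argument (reducibility points of the $\SL(2,\R)$ principal series, \v{Z}elobenko/Knapp--Stein data giving finitely many first-order vanishing conditions plus a Weyl-parity relation, then holomorphic division by $q_{n,m}$ and checking that the quotient is even) is a reasonable outline of what the companion paper carries out. One small imprecision: the roots of $q_{n,m}$ are \emph{not} in general symmetric under $\lambda\mapsto -\lambda$ (e.g.\ in the ``$|n|>|m|$, same signs'' case they are all of the form $-\tfrac{|m|+1}{2},\dots,-\tfrac{|n|-1}{2}$), so the ``$\pm$'' in your list of vanishing points is not right; the asymmetry of the zero set is precisely what is compensated by the evenness of $h$ via the identity between $q_{n,m}(-\lambda)$ and $q_{m,n}(\lambda)$. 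You already flag this bookkeeping as the main hazard, and it is indeed the only delicate part.
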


Thm.~\ref{thm:Meta3SL2R} tells us that $\prescript{}{n}{PWS}_{m,H}(\aL^*_\C)$ is freely-generated by $q_{n,m}$ as a $\Hol(\lambda^2)$-module.
We will need some information on the product of two such generators.
In fact, $q_{n,m} \cdot q_{l,n} \in \prescript{}{l}{PWS}_{m,0}(\aL^*_\C)$ and thus by Thm.~\ref{thm:Meta3SL2R}:
\begin{equation} \label{eq:rlnm}
q_{n,m} \cdot q_{l,n}= r^{l}_{n,m} \cdot q_{l,m}
\end{equation}
for some even polynomial $r^l_{n,m} \in \Hol(\lambda^2)$.
The following can be checked in a straightforward way.

\begin{lem} \label{lem:rlnm}
Let $n,m,l \in \Z$ be integers so that $n \equiv m$ (mod 2).
Then, for $l,n,m$, we have
\begin{equation*}
r^l_{n,m}=
\begin{cases}
1,& \text{if } l \leq n \leq m \text{ or } m \leq n \leq l,\\
q_{n,m}\cdot q_{m,n},& \text{if } l \leq m < n   \text{ or } n < m \leq l,\\
q_{n,l}\cdot q_{l,n}, & \text{if } m<l<n \text{ or } n<l<m.
\end{cases}
\end{equation*}
In particular, for fixed $n,m$, the set of polynomials $\{r^l_{n,m} \;|\; l \in \Z, l \equiv m$ (mod 2)$\}$ is finite. \QEDA
\end{lem}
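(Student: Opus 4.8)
The plan is to prove Lemma~\ref{lem:rlnm} by a direct computation with the explicit polynomials $q_{n,m}$ given in \eqref{eq:qnm}. First I would fix $n$ and $m$ with $n \equiv m \pmod 2$ and observe that by the defining relation \eqref{eq:rlnm}, the polynomial $r^l_{n,m}$ is simply the quotient
$$r^l_{n,m} = \frac{q_{n,m} \cdot q_{l,n}}{q_{l,m}},$$
which is well-defined as soon as we know a priori (from Thm.~\ref{thm:Meta3SL2R}) that this quotient is an even polynomial. So the task reduces to reading off, in each configuration of the triple $(l,n,m)$ on the integer line, which linear factors appear in $q_{n,m}$ and $q_{l,n}$ and which cancel against the factors of $q_{l,m}$. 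Since each $q_{a,b}$ is (up to the trivial case) a product of consecutive factors of the form $(\lambda + \tfrac{j}{2})$ or $(\lambda - \tfrac{j}{2})$ with $j$ running through an arithmetic progression of odd integers determined by $|a|, |b|$ and the signs of $a, b$, this is a bookkeeping exercise on intervals of half-integers.

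The key organizing step is to split into cases according to the \emph{cyclic order} of $l, n, m$ — equivalently, whether $n$ lies between $l$ and $m$, whether $m$ lies between $l$ and $n$, or whether $l$ lies between $m$ and $n$ — and further by the signs. When $l \le n \le m$ (or the mirror $m \le n \le l$), the factor sets of $q_{n,m}$ and $q_{l,n}$ are ``disjoint and adjacent'' and concatenate exactly to the factor set of $q_{l,m}$, giving $r^l_{n,m} = 1$; here I would just match the endpoints of the arithmetic progressions. When $l \le m < n$, the product $q_{n,m} q_{l,n}$ overshoots: the factors of $q_{n,m}$ run ``the wrong way'' relative to $q_{l,m}$, and after cancelling $q_{l,m}$ one is left with $q_{n,m}$ times the ``reversed overlap'' $q_{m,n}$ — this is the claimed $q_{n,m} \cdot q_{m,n}$. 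The remaining case $m < l < n$ is the genuinely symmetric one: $q_{l,m}$ cancels part of $q_{l,n}$, leaving $q_{l,n}$'s complementary piece which combines with $q_{n,m}$ to produce $q_{n,l} \cdot q_{l,n}$ after re-expressing. In each case I would verify the identity at the level of multisets of roots (with multiplicity) and check that the resulting polynomial is even, which it must be since it equals $r^l_{n,m}$.

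The main obstacle — really the only subtlety — is handling the sign cases in \eqref{eq:qnm} uniformly, since the formula for $q_{n,m}$ branches on whether $n,m$ have the same sign, and when $l$ is inserted one must track the sign of $l$ as well, producing more subcases than the three lines of the lemma suggest. I expect that after passing to the variable $\nu := \lambda$ and writing each $q_{a,b}$ as $\pm\prod (\lambda - c_{a,b}^{(i)})$ with the $c$'s explicit half-integers, many of these subcases collapse because the relevant progressions only depend on $|a|$ and $|b|$ and on the ``does the progression cross $0$'' dichotomy; the four branches of \eqref{eq:qnm} are precisely designed so that $q_{a,b} \cdot q_{b,c} / q_{a,c}$ is always a product of the ``crossed'' pieces. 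Once the same-sign and different-sign patterns are both checked against the three claimed formulas, the last sentence of the lemma is immediate: for fixed $n, m$, each of the three possible values of $r^l_{n,m}$ — namely $1$, $q_{n,m} q_{m,n}$, and $q_{n,l} q_{l,n}$ — is a polynomial whose degree is bounded (the first two do not depend on $l$ at all, and the third is nonzero only for the finitely many $l$ strictly between $m$ and $n$), so the set $\{ r^l_{n,m} : l \in \Z,\ l \equiv m \!\!\pmod 2 \}$ is finite.
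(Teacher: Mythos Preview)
Your proposal is correct and is exactly the ``straightforward check'' the paper alludes to without spelling out: the paper gives no proof beyond the sentence preceding the lemma, so your case-by-case matching of the linear factors of $q_{n,m} \cdot q_{l,n}$ against those of $q_{l,m}$, organized by the relative position of $l$ with respect to $n$ and $m$, is precisely the intended verification. Your finiteness argument at the end is also correct and complete.
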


\noindent
Consider three, not necessary irreducible, $K$-representations $(\delta,E_{\delta}), (\tau,E_{\tau}) $ and $(\gamma,E_\gamma)$ with 
\begin{equation*} \label{eq:VB}
E_\delta= \bigoplus_{k=1}^{d_\delta} E_{s_k}, \;\;\; 
E_\tau= \bigoplus_{j=1}^{d_\tau} E_{n_j}, \;\;\; 
E_\gamma= \bigoplus_{i=1}^{d_\gamma} E_{m_i},
\end{equation*}
where $s_k,n_j$ and $m_i$ are integers, $\forall k,j,i$.
Now fix an additional irreducible $K$-type $(l,E_l)$.
Let $D \in \DD_G(\E_{\tilde{\gamma}}, \E_{\tilde{\tau}})$ and $P,Q$ as in Hyp.~\ref{lem:PQ0L3}.
Note that we have interchanged the rules of $\gamma, \tilde{\gamma}$, etc.
By using Thm.~\ref{thm:Meta3SL2R} one can now show Hyp.~\ref{lem:PQ0L3}.

\begin{thm} \label{thm:Hyp1SL2R}
The sequence
\begin{equation} \label{eq:seqPWSH}
 \prescript{}{l}{PWS}_{\delta,H}(\aL^*_\C) \stackrel{Q}{\longrightarrow} \prescript{}{l}{PWS}_{\tau,H}(\aL^*_\C) \stackrel{P}{\longrightarrow} \prescript{}{l}{PWS}_{\gamma,H}(\aL^*_\C)
\end{equation}
is exact in the middle, that means that  $\emph \Impart(Q)=\emph \Ker(P)$.
\end{thm}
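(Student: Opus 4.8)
The strategy is to reduce the assertion to a statement about finitely generated modules over the polynomial ring in one variable and then to propagate it to holomorphic functions by a flatness argument.

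First I would decompose everything into irreducible $K$-types. Writing $E_\tau=\bigoplus_{j=1}^{d_\tau}E_{n_j}$, $E_\delta=\bigoplus_{k=1}^{d_\delta}E_{s_k}$, $E_\gamma=\bigoplus_{i=1}^{d_\gamma}E_{m_i}$, and using that the Fourier transform for sections is additive in the bundle, one gets
$$\prescript{}{l}{PWS}_{\tau,H}(\aL^*_\C)=\bigoplus_{j=1}^{d_\tau}\prescript{}{l}{PWS}_{n_j,H}(\aL^*_\C),$$
and similarly for $\delta,\gamma$ and for the polynomial spaces $\prescript{}{l}{PWS}_{*,0}(\aL^*_\C)$. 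By Thm.~\ref{thm:Meta3SL2R} each summand $\prescript{}{l}{PWS}_{n_j,H}$ is the \emph{free} rank-one module $\Hol(\lambda^2)\cdot q_{l,n_j}$ over the ring $\Hol(\lambda^2)$ of even entire functions, while $\prescript{}{l}{PWS}_{n_j,0}=\C[\lambda^2]\cdot q_{l,n_j}$. Setting $z=\lambda^2$, $R:=\C[z]$, $\mathcal{O}:=\Hol(\C)$, this identifies
$$\prescript{}{l}{PWS}_{\tau,H}\cong\mathcal{O}^{d_\tau},\qquad\prescript{}{l}{PWS}_{\tau,0}\cong R^{d_\tau},\qquad\prescript{}{l}{PWS}_{\tau,H}\cong\prescript{}{l}{PWS}_{\tau,0}\otimes_R\mathcal{O},$$
and likewise for $\delta,\gamma$. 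Since the $(n_j,m_i)$-entry of $P$ lies in $\prescript{}{n_j}{PWS}_{m_i,0}$, it equals $p_{ij}(\lambda^2)\,q_{n_j,m_i}(\lambda)$ for a polynomial $p_{ij}$; using \eqref{eq:rlnm} and Lem.~\ref{lem:rlnm} to rewrite $q_{n_j,m_i}\cdot q_{l,n_j}=r^{l}_{n_j,m_i}\cdot q_{l,m_i}$, the operator $P$ becomes left multiplication by the matrix $\widehat P=(p_{ij}\,r^{l}_{n_j,m_i})\in\Mat_{d_\gamma\times d_\tau}(R)$, and similarly $Q$ is left multiplication by some $\widehat Q\in\Mat_{d_\tau\times d_\delta}(R)$. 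In particular $P$ and $Q$ are $R$-linear.

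Next, the inclusion $\Impart(Q)\subseteq\Ker(P)$ is immediate: $P\circ Q$ is the Fourier transform of $D^t\circ\widetilde D^t=(\widetilde D\circ D)^t=0$, so $\widehat P\widehat Q=0$. For the reverse inclusion the decisive input is an algebraic exactness over $R$. By the very construction of $\widetilde D$ in Subsect.~\ref{subsect:Conj}, the transposed operators, read as maps of $(\g,K)$-modules of distributions supported at the origin,
$$\U(\g)\otimes_{\U(\kk)}E_\delta\xrightarrow{\ \widetilde D^t\ }\U(\g)\otimes_{\U(\kk)}E_\tau\xrightarrow{\ D^t\ }\U(\g)\otimes_{\U(\kk)}E_\gamma$$
(with the role-interchange of Thm.~\ref{thm:Hyp1SL2R} and the suitable tildes in force), form a sequence that is exact in the middle, since $\widetilde D$ was defined precisely so that $\Impart(\widetilde D^t)=\Ker(D^t)$. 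Because $K$ is compact, $E_l$ is projective (and injective) in the category of locally $K$-finite $K$-modules, so $\Hom_K(E_l,-)$ is exact and carries this to an exact-in-the-middle sequence of $R$-modules; by \eqref{eq:DhD}, \eqref{eq:PolyPWS} and Prop.~\ref{prop:1} (the Fourier transform turns composition of invariant operators into multiplication of the associated polynomials, and $h_{D^t\circ A}=D^t\circ h_A$), this is nothing but $\Impart(Q)=\Ker(P)$ in the polynomial spaces, i.e. $\Ker(\widehat P\colon R^{d_\tau}\to R^{d_\gamma})=\widehat Q\,R^{d_\delta}$. Alternatively one can derive this identity directly from the explicit shape of $\widehat P,\widehat Q$ and Lem.~\ref{lem:rlnm}, by exhibiting the columns of $\widehat Q$ as a generating set of the syzygy module of $\widehat P$.

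Finally I would lift to $\mathcal{O}$. The ring $\mathcal{O}=\Hol(\C)$ is an integral domain containing $R=\C[z]$, hence torsion-free, hence flat over the principal ideal domain $R$. Tensoring the complex $R^{d_\delta}\xrightarrow{\widehat Q}R^{d_\tau}\xrightarrow{\widehat P}R^{d_\gamma}$, which is exact in the middle by the previous step, with $\mathcal{O}$ over $R$ preserves exactness in the middle; but the tensored complex is $\mathcal{O}^{d_\delta}\xrightarrow{\widehat Q}\mathcal{O}^{d_\tau}\xrightarrow{\widehat P}\mathcal{O}^{d_\gamma}$, that is, $\prescript{}{l}{PWS}_{\delta,H}\xrightarrow{Q}\prescript{}{l}{PWS}_{\tau,H}\xrightarrow{P}\prescript{}{l}{PWS}_{\gamma,H}$. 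Hence $\Impart(Q)=\Ker(P)$, which is the claim. The main obstacle is the middle step: one must make precise that the $R$-module structure on $\U(\g)\otimes_{\U(\kk)}E_*$ given by the center of $\U(\g)$ corresponds under the Fourier transform to multiplication by $\lambda^2$, that $\Hom_K(E_l,-)$ really intertwines $\widetilde D^t,D^t$ with $Q,P$, and that the intertwining data split along the irreducible summands; once these bookkeeping points are settled, the remaining arguments are formal.
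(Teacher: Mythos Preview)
Your proposal is correct and follows essentially the same route as the paper: establish exactness at the polynomial level by taking the $l$-isotypic component of the defining exact sequence \eqref{eq:DDO} (the paper phrases this via Thm.~\ref{thm:PWSsect}, you via exactness of $\Hom_K(E_l,-)$), then use that $\prescript{}{l}{PWS}_{*,H}\cong\Hol(\lambda^2)\otimes_{\Pol(\lambda^2)}\prescript{}{l}{PWS}_{*,0}$ and that $\Hol(\lambda^2)$ is flat over the PID $\Pol(\lambda^2)$. Your presentation is somewhat more explicit in that you trivialize the free modules via the generators $q_{l,*}$ and write $P,Q$ as concrete polynomial matrices $\widehat P,\widehat Q$; this is not needed for the present theorem but is exactly the setup the paper uses later in the proof of Thm.~\ref{thm:Hyp23SL2R}.
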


\begin{proof} 
We look at the defining exact sequence \eqref{eq:DDO} for $\widetilde{D}^t$.
Taking the $l$-isotopic component and applying the Fourier transform, we obtain the exactness of
\begin{equation} \label{eq:new1}
 \prescript{}{l}{PWS}_{\delta,0}(\aL^*_\C) \stackrel{Q}{\longrightarrow} \underset{\text{exact}}{\prescript{}{l}{PWS}_{\tau,0}(\aL^*_\C)} \stackrel{P}{\longrightarrow} \prescript{}{l}{PWS}_{\gamma,0}(\aL^*_\C)
\end{equation}
by Thm.~\ref{thm:PWSsect}.
Note that $\prescript{}{l}{PWS}_{*,0}(\aL^*_\C)$ is a subspace of $\prescript{}{l}{PWS}_{*,H}(\aL^*_\C)$ for $*\in \{\delta,\tau,\gamma\}$.
Moreover due Thm.~\ref{thm:Meta3SL2R}, we have that each $\prescript{}{l}{PWS}_{*,H}(\aL^*_\C)$, which is a direct sum of spaces of the form $\prescript{}{l}{PWS}_{m,H}(\aL^*_\C)$, is a free finitely generated $\Hol(\lambda^2)$-module with polynomial generators.
Hence, $\prescript{}{l}{PWS}_{*,0}(\aL^*_\C)$ is a free $\Pol(\lambda^2)$-module with the same generators.
Thus
\begin{equation} \label{eq:new2}
\prescript{}{l}{PWS}_{*,H}(\aL^*_\C) \cong \Hol(\lambda^2) \otimes_{\Pol(\lambda^2)} \prescript{}{l}{PWS}_{*,0}(\aL^*_\C).
\end{equation}
In addition, $\Hol(\lambda^2)$ is a torsion free module over the principal ideal domain $\Pol(\lambda^2)$. 
This implies that $\Hol(\lambda^2)$ is a flat $\Pol(\lambda^2)$-module.
Hence, tensoring \eqref{eq:new1} with $\Hol(\lambda^2)$ over the ring $\Pol(\lambda^2)$ preserves the exactness of the sequence. 
By \eqref{eq:new2}, we obtain the exactness of \eqref{eq:seqPWSH}.
\end{proof}

Next, we establish Hyp.~\ref{thm:estimateL3} and Hyp.~\ref{hyp:constants}.

\begin{thm}[Estimate result in (\textcolor{blue}{Level 3})] \label{thm:Hyp23SL2R}
Let $P\in \prescript{}{\tau}{PWS}_{\gamma,0}(\aL^*_\C)$ be as above and $l \in \Z$.\\
There exist $M\in \N_0$ and, for all $r \geq 0$ and $N \in \N_0$, a positive constant $C_{r,N}$ so that for each
$\prescript{}{l}{u}\in \prescript{}{l}{PWS}_{\tau,H}(\aL^*_\C)$ with $\|P\prescript{}{l}{u}\|_{r,N} < \infty$, one can find $\prescript{}{l}{v}\in \prescript{}{l}{PWS}_{\tau}(\aL^*_\C)$ satisfying
\begin{itemize}
\item[(i)] $P\prescript{}{l}{u}=P\prescript{}{l}{v}$ and
\item[(ii)] 
$\|\prescript{}{l}{v}\|_{r,N+M} \leq C_{r,N}\|P\prescript{}{l}{u}\|_{r,N}.$
\end{itemize}
The constants $C_{r,N}$ and $M$ can be chosen to be independent of the integer $l$.
\end{thm}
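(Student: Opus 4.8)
The strategy is to reduce everything to a single scalar division problem governed by the polynomials $q_{n,m}$ and $r^l_{n,m}$, and then invoke Hörmander's one-variable estimates. First I would fix the $K$-type $l$ and use Theorem~\ref{thm:Meta3SL2R} to write, for each pair of $K$-types occurring in $E_\gamma,E_\tau,E_\delta$, the spaces $\prescript{}{l}{PWS}_{m,H}(\aL^*_\C)$ as the free rank-one $\Hol(\lambda^2)$-module generated by $q_{l,m}$. Under this identification the operator $P = \prescript{}{\tau}{\mathcal{F}}_{\gamma}(D^t)$, restricted to the $l$-isotypic component, becomes a matrix whose entries are (even) polynomials, since by \eqref{eq:new1} it preserves the polynomial Paley-Wiener spaces and, via \eqref{eq:rlnm}, composing with the generators only introduces the finitely many factors $r^l_{n,m}$. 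The key point is that $\prescript{}{l}{PWS}_{\tau,H}(\aL^*_\C)$ is a free $\Hol(\lambda^2)$-module of finite rank $d_\tau$, with coordinates $(h_1,\dots,h_{d_\tau})$, $h_j\in\Hol(\lambda^2)$, so that $\prescript{}{l}{u}$ is encoded by the vector $(h_j)_j$ and the norm $\|\prescript{}{l}{u}\|_{r,N}$ is comparable (up to the polynomial weights coming from $q_{l,n_j}$) to $\max_j \|h_j q_{l,n_j}\|_{r,N}$.

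Next I would pass to the variable $\zeta=\lambda^2$ (or work directly with even functions) and invoke Hörmander's results \cite[Thm.~7.6.11 and Cor.~7.6.12]{Hormander} on division of holomorphic functions of exponential type by a polynomial matrix $P$: given the holomorphic vector $w=P\prescript{}{l}{u}$ satisfying the growth bound $\|w\|_{r,N}<\infty$, one produces a holomorphic $\prescript{}{l}{v}$ with $P\prescript{}{l}{v}=w$ and $\|\prescript{}{l}{v}\|_{r,N+M}\le C_{r,N}\|w\|_{r,N}$, where $M$ depends only on $\deg P$ and the number of variables (here essentially one). The output $\prescript{}{l}{v}$ is automatically in $\prescript{}{l}{PWS}_{\tau,H}(\aL^*_\C)$; to land in $\prescript{}{l}{PWS}_{\tau}(\aL^*_\C)$ (i.e. satisfy the polynomial-times-$q$ structure of Theorem~\ref{thm:Meta3SL2R} with the \emph{right} generator) I would modify $\prescript{}{l}{v}$ by an element of $\Ker P$: using Theorem~\ref{thm:Hyp1SL2R} (exactness in the middle) together with the division of the discrepancy $\prescript{}{l}{v}-\prescript{}{l}{u}$, which lies in $\Ker P$, by the generators $q_{l,n_j}$, one adjusts $\prescript{}{l}{v}$ so that each coordinate is genuinely a polynomial multiple of the correct $q$, hence $\prescript{}{l}{v}\in\prescript{}{l}{PWS}_{\tau}(\aL^*_\C)$, while only worsening the estimate by a further polynomial factor absorbed into $M$ and $C_{r,N}$.

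The crucial observation making the constants $l$-independent is Lemma~\ref{lem:rlnm}: the polynomials $r^l_{n,m}$, as $l$ ranges over $\Z$ (with the parity constraint), form a \emph{finite} set, so the matrix $P$ expressed in the $q$-coordinates has entries drawn from a finite family of polynomials, uniformly bounded in degree and coefficients. Therefore Hörmander's constants $M$ and $C_{r,N}$, which depend only on the degree and size of the polynomial matrix, can be taken uniform in $l$; the only $l$-dependence left is in the weight polynomials $q_{l,n_j}$ relating $\|\prescript{}{l}{u}\|$ to the coordinate norms, and this dependence is exactly polynomial in the degree of $q_{l,n_j}$, hence polynomial in $|l|$, as Hypothesis~\ref{hyp:constants} allows. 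I would then conclude by combining these ingredients: Theorem~\ref{thm:Hyp23SL2R} is precisely Hypothesis~\ref{thm:estimateL3} together with Hypothesis~\ref{hyp:constants} for $G=\SL(2,\R)$.

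**Main obstacle.** The technical heart is the bookkeeping in the second paragraph: correcting the Hörmander solution $\prescript{}{l}{v}$ (which a priori only divides $P$ in the holomorphic category) into an element of the \emph{polynomial-generated} space $\prescript{}{l}{PWS}_{\tau}(\aL^*_\C)$ without destroying the norm estimate, i.e. showing that the modification by a kernel element can be done with controlled growth. This requires combining the explicit generator structure of Theorem~\ref{thm:Meta3SL2R}, the exactness of Theorem~\ref{thm:Hyp1SL2R}, and a quantitative (Cartan-type / effective Nullstellensatz-type) division statement with exponential-type bounds — essentially Hörmander's \cite[Cor.~7.6.12]{Hormander} applied a second time to the kernel module. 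Verifying that all constants appearing there remain uniform in $l$, using the finiteness from Lemma~\ref{lem:rlnm}, is the step that needs the most care.
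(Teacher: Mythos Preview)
Your plan has a genuine gap in the $l$-uniformity, and it is exactly the obstruction that prevents the paper from establishing Hypothesis~\ref{hyp:constants} for $\SL(2,\R)^d$ (see the remark following Theorem~\ref{thm:Hyp2SL4R}, whose proof is essentially the approach you outline). After solving the transformed system $\tilde A\,\tilde h = b$ via H\"ormander's Theorem~7.6.11, you return to the physical space by setting $\prescript{}{l}{v}_j = \tilde h_j\,q_{l,n_j}$. But multiplication by $q_{l,n_j}$ does not merely scale the norm by a polynomial-in-$|l|$ constant: it \emph{shifts the exponent}, so that one only obtains $\|\prescript{}{l}{v}_j\|_{r,N+M+d_l}\le C\,\|\tilde h_j\|_{r,N+M}$ with $d_l\ge\tfrac12\deg q_{l,n_j}$, and $\deg q_{l,n_j}$ grows linearly with $|l|$. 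Hence the total shift cannot be chosen uniform in $l$, contradicting both the theorem as stated (full independence) and even Hypothesis~\ref{hyp:constants} (which already requires $M$ uniform). Incidentally, your ``main obstacle'' is a non-issue: once $\tilde h_j\in\Hol(\lambda^2)$ satisfies the growth bound, $\tilde h_j\,q_{l,n_j}$ automatically lies in $\prescript{}{l}{PWS}_{n_j}(\aL^*_\C)$ by Theorem~\ref{thm:Meta3SL2R}, so no kernel correction is needed.

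The paper's proof avoids the coordinate detour. It inducts on $d_\gamma$, and in the base case $d_\gamma=1$ exploits that $\Pol(\lambda^2)$ is a principal ideal domain: with $\prescript{}{l}{\tilde a}_j:=a_j\,r^l_{n_j,m}$ and $\prescript{}{l}{\tilde p}:=\gcd_j(\prescript{}{l}{\tilde a}_j)$ one has $\prescript{}{l}{b}=\prescript{}{l}{\beta}\cdot\prescript{}{l}{\tilde p}$ and B\'ezout polynomials $\prescript{}{l}{R}_j$ satisfying $\sum_j\prescript{}{l}{\tilde a}_j\,\prescript{}{l}{R}_j=\prescript{}{l}{\tilde p}$. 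Defining $\prescript{}{l}{v}_j:=\prescript{}{l}{\beta}\,\prescript{}{l}{R}_j\,q_{l,n_j}$ yields, via \eqref{eq:rlnm}, the identity
\[
r^l_{n_j,m}\,\prescript{}{l}{R}_j\,\prescript{}{l}{w}\;=\;\prescript{}{l}{v}_j\cdot\prescript{}{l}{\tilde p}\cdot q_{n_j,m},
\]
which relates $\prescript{}{l}{v}_j$ and $\prescript{}{l}{w}$ \emph{directly}. Every polynomial factor appearing here has degree bounded independently of $l$: the $r^l_{n_j,m}$, $\prescript{}{l}{R}_j$, $\prescript{}{l}{\tilde p}$ range over a finite set by Lemma~\ref{lem:rlnm}, and $q_{n_j,m}$ does not involve $l$ at all. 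A single application of Lemma~\ref{lem:PDE2} then gives the estimate with truly $l$-independent $M$ and $C_{r,N}$. The inductive step on $d_\gamma$ uses Theorem~\ref{thm:Hyp1SL2R} in the manner you suggest.
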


In the proof of the theorem, we will use the following standard result.
For convenience of the reader, we include the proof here.

\begin{lem} \label{lem:PDE2}
Let $p(z)=\sum_{n=0}^k a_n z^n$ be a polynomial in one variable such that the leading coefficient $a_k$ is not zero. 
For $r>1$, let $f$ be a holomorphic function on $B_r(0) \subset \aL^*_\C$. 
Then
\begin{equation} \label{eq:estimp}
|f(0)| \leq {|a_k|}^{-1} \sup_{|z|=1} |f(z)p(z)|.
\end{equation}
Here, $B_r(0)$ denotes an open ball of radius $r$ centered at $0$ in $\aL^*_\C$.
\end{lem}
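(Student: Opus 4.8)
The plan is to reduce the estimate to the mean-value property of holomorphic functions, applied to $f$ times a suitable modification of $p$ whose values on the unit circle agree with those of $p$ but whose constant Taylor coefficient is (the conjugate of) the leading coefficient $a_k$. First I would pass to the Taylor expansion $f(z)=\sum_{j\geq 0} c_j z^j$ of $f$ about the origin. Since $f$ is holomorphic on $B_r(0)$ with $r>1$, this series converges uniformly on the closed unit disk, so $f$ extends continuously to $|z|\leq 1$ and $f(0)=c_0$.

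The key device is the ``reversed-conjugate'' polynomial
\[
q(z):= z^k\,\overline{p(1/\bar z)} = \sum_{n=0}^{k} \overline{a_n}\, z^{k-n}.
\]
I would record two properties. First, on the unit circle $|z|=1$ one has $1/\bar z = z$, hence $q(z)= z^k\,\overline{p(z)}$ and therefore $|q(z)|=|p(z)|$ there. Second, the constant term of $q$ equals $\overline{a_k}$, which is nonzero by hypothesis.

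Next I would compute the constant Taylor coefficient of the holomorphic function $f\cdot q$ on $B_r(0)$: since $q$ is a polynomial with constant term $\overline{a_k}$ and $f$ has constant term $f(0)$, the product $f(z)q(z)$ has constant term $f(0)\,\overline{a_k}$. By the mean-value property of holomorphic functions,
\[
f(0)\,\overline{a_k} = \frac{1}{2\pi}\int_{0}^{2\pi} f(e^{i\theta})\, q(e^{i\theta})\, d\theta .
\]
Taking absolute values and using $|q(e^{i\theta})|=|p(e^{i\theta})|$ yields
\[
|a_k|\,|f(0)| \leq \frac{1}{2\pi}\int_{0}^{2\pi} |f(e^{i\theta})|\,|p(e^{i\theta})|\, d\theta \leq \sup_{|z|=1}|f(z)p(z)|,
\]
and dividing by $|a_k|$ gives \eqref{eq:estimp}.

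There is essentially no hard step here; the only point requiring (minimal) care is the passage from $p$ to $q$. One needs the modified polynomial to retain the circle values of $|p|$ while having the \emph{relevant} coefficient become $\overline{a_k}$: applying the mean-value identity directly to $f\cdot p$ would instead extract the coefficient of $z^k$ in the product, namely $\sum_{n} c_{k-n} a_n$, which is not controlled by $|f(0)|$ alone. The conjugation in $q$ is exactly what aligns the constant term of the product with $f(0)$.
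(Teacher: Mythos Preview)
Your proof is correct and follows essentially the same route as the paper's: both construct the reversed-conjugate polynomial $q(z)=\sum_{n=0}^k \overline{a_n}\,z^{k-n}$, observe that $q(0)=\overline{a_k}$ and $|q|=|p|$ on the unit circle, and then bound $|f(0)\,q(0)|$ by $\sup_{|z|=1}|f(z)q(z)|$. The only cosmetic difference is that you invoke the mean-value property where the paper cites the maximum principle.
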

\noindent
Note that \eqref{eq:estimp} implies that for $f\in \Hol(\aL^*_\C)$
\begin{equation} \label{eq:Lem6}
|f(\lambda)| \leq C_p \sup_{|z|=1} |f(\lambda+z)p(\lambda+z)|, \;\; \lambda \in \aL^*_\C,
\end{equation}
where the constant $C_p$ depends on the polynomial $p$, but not on $\lambda \in \aL^*_\C$.

\begin{proof}[Proof of Lem.~\ref{lem:PDE2}]
Set $q(z):=\sum_{n=0}^k \overline{a}_{k-n} z^n$. The product $q \cdot f$ is holomorphic on $B_r(0)$. By the maximum principle for $B_1(0)$, we have
$$|q(0) f(0)| \leq \sup_{|z|=1} |q(z) f(z)|.$$
Note that $q(0)=\overline{a}_k$.
 If $|z|=1$, then we obtain
$$
\overline{p(z)}=\sum_{n=0}^k \overline{a}_n \overline{z}^n = \sum_{n=0}^k \overline{a}_n z^{-n} =z^{-k} \sum_{n=0}^\infty \overline{a}_n z^{k-n} =z^{-k}q(z).
$$
Therefore, for $|z|=1$, we have $|p(z)| =|q(z)|,$ and thus the lemma follows.
\end{proof}

The strategy of the proof of Thm.~\ref{thm:Hyp23SL2R} is borrowed from Hörmander's proof of an analogous (local) result for the Euclidean case \cite[Prop.~7.6.5]{Hormander}.
Our arguments are simplier and already give global estimates since we do not have to consider functions of several complex variables.
On the other hand, our situation is bit more complicated due to the presence of the generators $q_{n,m}$.

\begin{proof}[Proof of Thm.~\ref{thm:Hyp23SL2R}]
We proceed by induction on the dimension $d_\gamma$ of the vector space $E_\gamma$. 

\textit{Initial case}: Let us show that the theorem is true when $d_\gamma=1$, i.e. $\gamma=m$ for some $m \in \Z$.\\
Set $\prescript{}{l}{w}:= P \prescript{}{l}{u}=\sum_{j=1}^{d_\tau} P_j\prescript{}{l}{u}_j$,
where $P_j \in \prescript{}{j}{PWS}_{m,0}(\aL^*_\C)$ and $\prescript{}{l}{u}_j \in \prescript{}{l}{PWS}_{n_j,H}(\aL^*_\C)$.
Now by Thm.~\ref{thm:Meta3SL2R}, we have
\begin{eqnarray}
\prescript{}{l}{u}_j  &=& \prescript{}{l}{h}_j \cdot q_{l,n_j} \label{eq:lu} \\
P_j &=& a_j \cdot q_{n_j,m}  \label{eq:P} \\
\prescript{}{l}{w} &=& \prescript{}{l}{b} \cdot q_{l,m} \label{eq:lw}
\end{eqnarray}
for some $\prescript{}{l}{h}_j, \prescript{}{l}{b} \in \Hol(\lambda^2)$ and $a_j \in \Pol(\lambda^2)$.
In order to obtain the desired $\prescript{}{l}{v}$, we want to replace each $\prescript{}{l}{h}_j$ by some $\prescript{}{l}{\tilde{h}}_j$, which can be estimated by $P$ and $\prescript{}{l}{w}$.

We now observe by putting the equations \eqref{eq:lu}, \eqref{eq:P} and \eqref{eq:lw} in $\sum_{j=1}^{d_\tau} P_j\prescript{}{l}{u_j}=\prescript{}{l}{w}$, for $\lambda \in \aL^*_\C$:
\begin{eqnarray*}
\sum_{j=1}^{d_\tau} P_j(\lambda) \prescript{}{l}{u}_j(\lambda)=\prescript{}{l}{w}(\lambda)
&\iff&
\sum_{j=1}^{d_\tau} a_j(\lambda) q_{n_j,m}(\lambda) q_{l,n_j}(\lambda) \prescript{}{l}{h}_j(\lambda) = \prescript{}{l}{b}(\lambda) q_{l,m}(\lambda) \\
&\stackrel{\text{Lem.~\ref{lem:rlnm}}}{\iff}& \sum_{j=1}^{d_\tau} a_j(\lambda) r^l_{n_j,m}(\lambda) \prescript{}{l}{h}_j(\lambda) = \prescript{}{l}{b}(\lambda)\\
&\iff&  \sum_{j=1}^{d_\tau} \prescript{}{l}{\tilde{a}}_j(\lambda) \prescript{}{l}{h}_j(\lambda) = \prescript{}{l}{b}(\lambda),
\end{eqnarray*}
where in the last line we set $\prescript{}{l}{\tilde{a}}_j(\lambda):=a_j(\lambda) r^l_{n_j,m}(\lambda), \forall j$.
This implies that $\prescript{}{l}{b}$ is divisible by the greatest common divisor
$$\text{gcd}_{\C[\lambda^2]}(\prescript{}{l}{\tilde{a}}_1(\lambda), \dots, \prescript{}{l}{\tilde{a}}_{d_\tau}(\lambda))=:\prescript{}{l}{\tilde{p}}(\lambda) \in \Pol(\lambda^2),$$
i.e.
\begin{equation} \label{eq:lk}
\prescript{}{l}{b}(\lambda)=\prescript{}{l}{\beta}(\lambda)\prescript{}{l}{\tilde{p}}(\lambda)
\end{equation}
for some $\prescript{}{l}{\beta} \in \Hol(\lambda^2)$. 
Since $\Pol(\lambda^2)$ is a principal ideal domain, by Bézout's theorem, we can find polynomials $\prescript{}{l}{R}_j \in \Pol(\lambda^2)$ such that
$$ \sum_{j=1}^{d_\tau} \prescript{}{l}{\tilde{a}}_j(\lambda) \prescript{}{l}{R}_j(\lambda)=\prescript{}{l}{\tilde{p}}(\lambda).$$
Now we set
$$\prescript{}{l}{\tilde{h}}_j(\lambda):= \prescript{}{l}{\beta}(\lambda)  \prescript{}{l}{R}_j(\lambda), \;\; \text{ for } \lambda \in \aL^*_\C, j=1, \dots, d_\tau$$
and define  $\prescript{}{l}{v}_j\in \prescript{}{l}{PWS}_{n_j,H}(\aL^*_\C)$ by
\begin{equation} \label{eq:newlv}
\prescript{}{l}{v}_j(\lambda) := \prescript{}{l}{\tilde{h}}_j(\lambda) q_{l,n_j}(\lambda)=\prescript{}{l}{\beta}(\lambda)  \prescript{}{l}{R}_j(\lambda)q_{l,n_j}(\lambda).
\end{equation}
Concerning the estimate, from the equations \eqref{eq:lw}, \eqref{eq:lk}, \eqref{eq:newlv} and the relation \eqref{eq:rlnm}, we obtain
$$ r^l_{n_j,m}(\lambda)\prescript{}{l}{R}_j(\lambda)\prescript{}{l}{w}(\lambda) =\prescript{}{l}{v}_j(\lambda)\prescript{}{l}{\tilde{p}}(\lambda)q_{n_j,m}(\lambda).$$
Due \eqref{eq:Lem6}, this leads to
$$
|\prescript{}{l}{v}_j(\lambda)| \leq C_{l,j}^1 \sup_{|z| \leq 1} \Big\{ |r^l_{n_j,m}(\lambda +z) \prescript{}{l}{R}_j(\lambda+z) \prescript{}{l}{w}(\lambda+z)| \Big\},
$$
where $C_{l,j}^1$ is a constant depending on the two polynomials $\prescript{}{l}{\tilde{p}}$ and $q_{n_j,m}$.
Moreover, since $r^l_{n_j,m}$ and $\prescript{}{l}{R}_j$ are polynomials, we can choose $M_l \in \N_0$ so that
$$ |r^l_{n_j,m}(\lambda) \prescript{}{l}{R}_j(\lambda)| \leq C^2_l(1+|\lambda|^2)^{M_l}, \;\; \lambda \in \aL^*_\C.$$
The constants $C_{l,j}^1, C^2_l, M_l$ depend on the integer $l$.
However, by Lem.~\ref{lem:rlnm}, there are only finitely many $r^l_{n_j,m}$ and therefore also $\prescript{}{l}{R}_j$ and $\prescript{}{l}{\tilde{p}}$.
Hence, we can choose these constants such that
$C:= \sup_{l \in \Z, j} C^1_{l,j} \cdot C^2_l$ and $M:=\sup_{l \in \Z} M_l< \infty$.
Coming back to our inequality, we get
\begin{eqnarray*}
\|\prescript{}{l}{v}(\lambda)\|_{\text{op}} 
= \max\{|\prescript{}{l}{v}_j(\lambda)| \;|\; j=1, \dots, d_\tau\} 
&\leq& C \sup_{|z| \leq 1} \Big\{(1+|\lambda+z|^2)^M |\prescript{}{l}{w}(\lambda+z)|\Big\}\\
&\leq& C' (1+|\lambda|^2)^{M} \sup_{|z| \leq 1} |\prescript{}{l}{w}(\lambda+z)|,
\end{eqnarray*}
where $C'>0$ is a constant independent of $l$.
Now by multiplying both sides by the weight factor $(1+|\lambda|^2)^{-(N+M)}e^{-r|\Repart(\lambda)|}$ and taking the supremum over $\lambda$, we obtain
\begin{eqnarray*}
\|\prescript{}{l}{v}\|_{r,N+M} 
\leq C' \sup_{\lambda \in \C} (1+|\lambda|^2)^{-N} e^{-|\Repart(\lambda)|} |\prescript{}{l}{w}(\lambda)| 
&\leq& C'' \sup_{\lambda \in \C, |z| \leq 1} (1+|\lambda+z|^2)^{-N} e^{-|\Repart(\lambda+z)|} |\prescript{}{l}{w}(\lambda+z)| \\
&=& C'' \|\prescript{}{l}{w}\|_{r,N}.
\end{eqnarray*}
This is the desired estimate for $d_\gamma=1$.
Note that $C''$ is independent of $l$.

\textit{Inductive step:} It remains to show that the theorem is true for $d_{\gamma}>1$.\\
By induction hypothesis, assume that the statement is already proved for systems involving less than $d_\gamma$ equations.
Write $P=(P_1, P')^T$, where $P_1$ stands for the first and $P'$ for the $d_{p-1}$ remaining rows of the matrix $P$.\\
In particular, we then can consider the equation $P_1 \prescript{}{l}{v}_1=P_1 \prescript{}{l}{u}$ and conclude that it has a solution  $\prescript{}{l}{v}_1 \in \prescript{}{l}{PWS}_{n}(\aL^*_\C)$ such that
\begin{equation} \label{eq:Estimatev1}
\|\prescript{}{l}{v}_1\|_{r,N+M} \leq C \|P_1\prescript{}{l}{u}\|_{r,N} \leq C \|P \prescript{}{l}{u}\|_{r,N}
\end{equation}
with constants $C, M$ independent of $l$.
Now, we make the ansatz $\prescript{}{l}{v}=\prescript{}{l}{v}_1+\prescript{}{l}{h}$, where we have to find $\prescript{}{l}{h}$ so that $P_1 \prescript{}{l}{h}=0, P'\prescript{}{l}{h}=P'(\prescript{}{l}{u}-\prescript{}{l}{v}_1)$ and $\prescript{}{l}{h}$ can be estimated.\\
By applying Thm.~\ref{thm:Hyp1SL2R} to $P_1$ instead of $P$ and using $P_1 (\prescript{}{l}{u}-\prescript{}{l}{v}_1)=0$, we can write
\begin{equation} \label{eq:uvQ}
\prescript{}{l}{u}-\prescript{}{l}{v}_1=Q_1 \prescript{}{l}{f},
\end{equation}
where $\prescript{}{l}{f} \in \prescript{}{l}{PWS}_{\delta_1,H}(\aL^*_\C)$.
We want to find $\prescript{}{l}{h}$ of the form $\prescript{}{l}{h}=Q_1 \prescript{}{l}{g}$.\\
Moreover, the equations $P' \prescript{}{l}{h}=P' (\prescript{}{l}{u}-\prescript{}{l}{v}_1)$ then become $$P' Q_1 \prescript{}{l}{g}=P' Q_1\prescript{}{l}{f}.$$
Let $d_1 \geq \frac{\deg P'}{2}$ and set $N':=N+M+d_1$. 
Since, we only have $d_{\gamma}-1$ equations, by induction hypothesis there exist some constants $M'$ and $C'_{r,N}$ (independent of $l$), and
$\prescript{}{l}{g}  \in \prescript{}{l}{PWS}_{\delta}(\aL^*_\C)$ with 
\begin{eqnarray} \label{eq:Estimateg}
\|\prescript{}{l}{g}\|_{r,N'+M'} 
\leq C'_{r,N'} \|P'Q\prescript{}{l}{f}\|_{r,N'} 
&\stackrel{\eqref{eq:uvQ}}{=}& C'_{r,N'} \|P'(\prescript{}{l}{u}-\prescript{}{l}{v_1})\|_{r,N'} \nonumber \\
&\leq& {C'}_{r,N'} \Big(\|P'\prescript{}{l}{u}\|_{r,N'} + \| P'\prescript{}{l}{v_1}\|_{r,N'}\Big) \nonumber \\
&\leq& {C'}_{r,N'}\Big(\|P\prescript{}{l}{u}\|_{r,N'} +C' \|v_1\|_{r,N+M}\Big) \nonumber \\
&\stackrel{\eqref{eq:Estimatev1}}{\leq}& C''_{r,N'} \|P \prescript{}{l}{u}\|_{r,N'}. 
\end{eqnarray}
Now let $d_2 \geq \frac{\deg Q_1}{2}$ and set $M'':=M+M'+d_1+d_2$.
We obtain for $\prescript{}{l}{v}:=\prescript{}{l}{v}_1+Q\prescript{}{l}{g}$
\begin{eqnarray*}
\|\prescript{}{l}{v}\|_{r,N+M''}
\leq \|\prescript{}{l}{v}_1\|_{r,N+M''} +\|Q\prescript{}{l}{g}\|_{r,N+M''} 
&\leq& \|\prescript{}{l}{v}_1\|_{r,N+M} + C'' \|\prescript{}{l}{g}\|_{r,N'+M'} \\
&\stackrel{\eqref{eq:Estimatev1} \;\&\; \eqref{eq:Estimateg}}{\leq}& C''_{r,N} \|P\prescript{}{l}{u}\|_{r,N}.
\end{eqnarray*}
This is the desired estimate.
\end{proof}

By the results of Sect.~\ref{sect:H2}, we conclude that Conj.~\ref{conj:1} and its variants (see Sect.~\ref{sect:variants}) are true for $G/K=\HH^2$.\\
Let us state this as a theorem.

\begin{thm} \label{thm:new1}
Let $G=\SL(2,\R)$ and $D \in \DD_G(\E_\gamma, \E_\tau)$ be an $G$-invariant differential operator between sections of homogeneous vector bundles over the hyperbolic plane $\HH^2$.
Then, $D$ satisfies all 4 variants of solvability stated in Def.~\ref{defn:variants}. \qed
\end{thm}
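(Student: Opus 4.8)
The plan is to deduce Thm.~\ref{thm:new1} from the general solvability machinery of Section~\ref{sect:Solv}, once the three Level~3 hypotheses have been verified for $G=\SL(2,\R)$. First I would take an arbitrary $D\in\DD_G(\E_\gamma,\E_\tau)$, construct its integrability operator $\widetilde{D}$ as in Subsect.~\ref{subsect:Conj}, pass to the transposed operators $D^t$ and $\widetilde{D}^t$, and apply the Fourier transform for sections; by \eqref{eq:PolyPWS} this produces the polynomial matrices $P:=\prescript{}{\tilde\tau}{\mathcal{F}}_{\tilde\gamma}(D^t)$ and $Q:=\prescript{}{\tilde\delta}{\mathcal{F}}_{\tilde\tau}(\widetilde{D}^t)$ appearing in Hyp.~\ref{lem:PQ0L3}. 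Since $\widehat K\cong\Z$, each of the bundles $\E_{\tilde\delta},\E_{\tilde\tau},\E_{\tilde\gamma}$ decomposes into a finite direct sum of the line bundles $\E_m$, $m\in\Z$, so that fixing an auxiliary irreducible $K$-type $l\in\Z$ puts us precisely in the setting of Thms.~\ref{thm:Hyp1SL2R} and \ref{thm:Hyp23SL2R}, where, by Thm.~\ref{thm:Meta3SL2R}, the Level~3 Paley--Wiener spaces are free finitely generated $\Hol(\lambda^2)$-modules on the polynomial generators $q_{n,m}$.

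Next I would quote Thm.~\ref{thm:Hyp1SL2R}, which gives the exactness $\Impart(Q)=\Ker(P)$ in $\prescript{}{l}{PWS}_{\tilde\tau,H}(\aL^*_\C)$ for every $l$, i.e.~Hyp.~\ref{lem:PQ0L3}, and Thm.~\ref{thm:Hyp23SL2R}, which establishes the solvability with estimates of Hyp.~\ref{thm:estimateL3} with constants $M$ and $C_{r,N}$ that do not depend on $l$; in particular this yields the uniformity demanded by Hyp.~\ref{hyp:constants}. Thus $D$ satisfies Hyp.~\ref{lem:PQ0L3}, Hyp.~\ref{thm:estimateL3} and Hyp.~\ref{hyp:constants}. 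From here the four variants of Def.~\ref{defn:variants} follow formally: solvability on $X$ (variant~(a)) by Thm.~\ref{thm:localsolv}, where Hyp.~\ref{thm:estimateL3} together with Hyp.~\ref{hyp:constants} upgrades to Hyp.~\ref{thm:estimate} through Thm.~\ref{thm:Hyp4.4}; solvability on balls and hence local solvability (variants~(c) and~(d)) by Prop.~\ref{prop:1}; and $K$-finite solvability (variant~(b)) by Prop.~\ref{prop:2}, which needs only Hyp.~\ref{lem:PQ0L3} and Hyp.~\ref{thm:estimateL3}.

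The substance of the argument is entirely contained in Thms.~\ref{thm:Hyp1SL2R} and \ref{thm:Hyp23SL2R}, and the step I expect to be the main obstacle is the $l$-independence of the constants in Thm.~\ref{thm:Hyp23SL2R}. A priori the division and B\'ezout data producing the solution $\prescript{}{l}{v}$ --- the gcd $\prescript{}{l}{\tilde{p}}$, the cofactors $\prescript{}{l}{R}_j$, and the transition polynomials $r^l_{n,m}$ --- have degrees and coefficients depending on $l$; only because Lem.~\ref{lem:rlnm} shows that, for fixed source and target $K$-types, just finitely many $r^l_{n,m}$ arise as $l$ runs over $\Z$ can all of these be bounded uniformly in $l$. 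This uniformity is exactly what permits the reassembly of the Level~3 solutions into a Level~2 solution via the Peter-Weyl/Fourier-series estimate of Thm.~\ref{thm:Hyp4.4}; without it, only the $K$-finite variant~(b) would survive.
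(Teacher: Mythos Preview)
Your proposal is correct and follows exactly the approach of the paper: the theorem is obtained by combining Thm.~\ref{thm:Hyp1SL2R} and Thm.~\ref{thm:Hyp23SL2R} (which establish Hyp.~\ref{lem:PQ0L3}, \ref{thm:estimateL3} and \ref{hyp:constants} for $\SL(2,\R)$) with the general criteria Thm.~\ref{thm:localsolv}, Prop.~\ref{prop:1} and Prop.~\ref{prop:2}. Your identification of Lem.~\ref{lem:rlnm} as the crucial ingredient for the $l$-uniformity of the constants is also on target.
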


\section{Going beyond} \label{sect:Beyond}
\subsection{Solvability on finite products of $\HH^2$}
Let $G:=G'\times G' = \SL(2,\R)\times \SL(2,\R)$ and $K:=K'\times K'$ with $K'=\SO(2)$.
$K$ is maximal compact in $G$.
Analogous to Sect.~\ref{sect:H2}, the symmetric space $X=G/K$ can be identified with $\HH^2 \times \HH^2$, and $\aL^*_\C$ with $\C \times \C$.
We also have $\widehat{K} \cong \Z \times \Z$, where for $n=(n_1,n_2) \in \Z \times \Z$ we have $E_n = \C$ with $K$-action $(k_{\theta_1}, k_{\theta_2}) \mapsto e^{in_1\theta_1} e^{in_2\theta_2} \in \mathbf{U}(1)$ (compare Sect.~\ref{sect:H2}).
Consider now an additional irreducible $K$-representation $(m,E_m)$.
Note that $\Hom_M(E_n,E_m)=0$ unless $n_1 \equiv m_1$ (mod 2), $n_2 \equiv m_2$ (mod 2).
In the latter case, we identify $\Hom_M(E_n,E_m)$ with $\C$ and define $q_{n,m} \in \Pol(\aL^*_\C, \Hom(E_n,E_m))$ by
$$q_{n,m}(\lambda)= q_{n_1,m_1} (\lambda_1) \cdot q_{n_2,m_2}(\lambda_2)$$
for $\lambda=(\lambda_1,\lambda_2) \in \aL^*_\C$ (see \eqref{eq:qnm}).
In \cite[Sect.~4]{PalmirottaIntw}, we described and presented a complete Paley-Wiener-Schwartz theorem for $G$.

\begin{thm}[Intertwining condition in (\textcolor{blue}{Level 3}), \cite{PalmirottaIntw}, Thm.~7] \label{thm:Meta3SL4R}
Let $n,m \in \Z^2$ be two tuples of integers.
Then, $\varphi \in \emph \Hol(\aL^*_\C,\emph \Hom_M(E_n,E_m))$ satisfies the intertwining condition (3.ii) of Def.~\ref{def:PWSspace} if and only if there exists an holomorphic function $h\in \emph \Hol(\lambda_1^2,\lambda_2^2)$, i.e.
$h(\lambda_1,\lambda_2)=h(-\lambda_1,\lambda_2)=h(\lambda_1,-\lambda_2)$
such that 
\begin{equation} \label{eq:compcondLevel3}
\varphi(\lambda_1,\lambda_2):=h(\lambda_1,\lambda_2) \cdot q_{l,n}(\lambda_1,\lambda_2). 
\end{equation} \qed
\end{thm}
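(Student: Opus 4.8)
The plan is to deduce this two-factor statement from the rank-one case, Thm.~\ref{thm:Meta3SL2R}, by exploiting that the whole Paley-Wiener picture for $G=G'\times G'$ factors through the two copies of $G'=\SL(2,\R)$. A minimal parabolic $P=MAN$ of $G$ is the product of minimal parabolics of $G'$, the principal series representations of $G$ are outer tensor products of principal series of the two factors, and the Knapp-Stein and Želobenko intertwining operators for $G$ are tensor products of those for $G'$. Consequently every intertwining datum of $G$ (a $G$-submodule in a finite sum of principal series) is assembled from intertwining data of the two factors, and the intertwining condition of Def.~\ref{def:PWSspace} for $\varphi$ on $\aL^*_\C=\C\times\C$ becomes the conjunction of a family of conditions in the variable $\lambda_1$ alone (holding for all $\lambda_2$) and a family of conditions in the variable $\lambda_2$ alone (holding for all $\lambda_1$), each of the type analysed for $\SL(2,\R)$.

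For the ``only if'' direction, let $\varphi\in\Hol(\aL^*_\C,\Hom_M(E_n,E_m))$ satisfy the intertwining condition. Fixing a generic $\lambda_2\in\C$, the function $\lambda_1\mapsto\varphi(\lambda_1,\lambda_2)$ satisfies the $\SL(2,\R)$-intertwining condition attached to the first factor, so by Thm.~\ref{thm:Meta3SL2R} it is divisible by $q_{n_1,m_1}(\lambda_1)$ with an even holomorphic quotient. Since $q_{n_1,m_1}$ has simple zeros, $\varphi$ vanishes on each hyperplane $\{\lambda_1=a\}$ with $a$ a zero of $q_{n_1,m_1}$, and therefore $h_1:=\varphi/q_{n_1,m_1}(\lambda_1)$ is holomorphic on all of $\C\times\C$; it is moreover even in $\lambda_1$, because it is so for every fixed generic $\lambda_2$, hence for all $\lambda_2$ by the identity theorem. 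Dividing by the scalar $q_{n_1,m_1}(\lambda_1)$, which is nonzero for generic $\lambda_1$, does not affect the linear germ relations coming from the second factor, so $h_1$ still satisfies them; applying Thm.~\ref{thm:Meta3SL2R} in the variable $\lambda_2$ gives $h_1=q_{n_2,m_2}(\lambda_2)\cdot h$ with $h$ holomorphic and even in $\lambda_2$. Altogether $\varphi=q_{n_1,m_1}(\lambda_1)q_{n_2,m_2}(\lambda_2)\cdot h=q_{n,m}\cdot h$ with $h\in\Hol(\lambda_1^2,\lambda_2^2)$.

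For the converse, one checks that $\varphi=h\cdot q_{n,m}$ with $h$ even in each variable satisfies every intertwining condition of $G$. By the factorisation of the intertwining data it is enough to verify the conditions coming from each factor separately, and these reduce, after fixing the other variable, to Thm.~\ref{thm:Meta3SL2R}, since $q_{n,m}(\lambda)=q_{n_1,m_1}(\lambda_1)q_{n_2,m_2}(\lambda_2)$ and $h$ is even in the relevant variable.

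The main obstacle is the first step: proving that the intertwining data for the product group $G'\times G'$ are \emph{exhausted} by (sums of) outer tensor products of intertwining data for the factors, so that no genuinely rank-two relation among the germs of $\varphi$ occurs. This is exactly where the explicit analysis of the reducible principal series, the Knapp-Stein operators and the Želobenko operators for $\SL(2,\R)$, together with the reducibility criterion \cite[Thm.~2]{PalmirottaIntw}, enters; once the data are known to factor, the remaining divisibility and evenness bookkeeping, governed by the Weyl group $W\cong(\Z/2\Z)^2$ acting separately on $(\lambda_1,\lambda_2)$, is routine.
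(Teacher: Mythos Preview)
The paper does not prove this theorem here; it is quoted from \cite[Thm.~7]{PalmirottaIntw} and closed with a bare \qed. So there is no in-paper argument to compare against, only the companion paper.

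Your outline is the right shape and matches how the companion paper proceeds: reduce the product case to two applications of Thm.~\ref{thm:Meta3SL2R}, one in each variable. The divisibility and evenness bookkeeping you give is correct (the zeros of $q_{n_1,m_1}$ are indeed simple, so slicewise divisibility yields global divisibility; evenness in $\lambda_1$ for generic $\lambda_2$ propagates by the identity theorem; the second-factor conditions are closed linear relations on germs in $\lambda_2$, hence survive division by a nonzero scalar in $\lambda_1$ and then pass to the limit).

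You also correctly isolate the only substantive point: that the set of intertwining data for $G'\times G'$ is generated by tensor products of intertwining data for the factors, so that no genuinely ``mixed'' germ relation appears. This is not automatic from abstract nonsense about tensor products of modules; it uses the concrete structure of the $\SL(2,\R)$ principal series (composition series, Knapp--Stein and \v{Z}elobenko operators) and the sufficiency criterion \cite[Thm.~2]{PalmirottaIntw}. In \cite{PalmirottaIntw} this is established directly. Your last paragraph flags this honestly but does not supply the argument; if you want a self-contained proof you would need to either reproduce that analysis or invoke a general statement to the effect that for a direct product the relevant intertwining data factor, which in turn rests on the fact that every closed $G'\times G'$-invariant subspace of a finite sum of tensor-product principal series is a sum of tensor products of invariant subspaces of the factors.
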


Let $\tau, \gamma$ be two, not necessarily irreducible, representations of $K$ and $D \in \DD_G(\E_{\tilde{\gamma}}, \E_{\tilde{\tau}})$. 
We first prove Hyp.~\ref{lem:PQ0L3}.

\begin{thm} \label{thm:Hyp1SL4R}
Let $P, Q$ be as in Hyp.~\ref{lem:PQ0L3}.
The sequence
\begin{equation} \label{eq:shortseqSL4R}
 \prescript{}{l}{PWS}_{\delta,H}(\aL^*_\C) \stackrel{Q}{\longrightarrow} \prescript{}{l}{PWS}_{\tau,H}(\aL^*_\C) \stackrel{P}{\longrightarrow} \prescript{}{l}{PWS}_{\gamma,H}(\aL^*_\C)
\end{equation}
is exact in the middle, that means that  $\emph \Impart(Q)=\emph \Ker(P)$.
\end{thm}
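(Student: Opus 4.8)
The plan is to reproduce the proof of Thm.~\ref{thm:Hyp1SL2R} as far as it goes, and, at the single point where the principal-ideal-domain property of the base ring was used, to replace it by a genuine flatness statement adapted to two variables. First, exactly as in Thm.~\ref{thm:Hyp1SL2R}, I would look at the defining exact sequence \eqref{eq:DDO} for $\widetilde{D}^t$, pass to its $l$-isotypic component and apply the Fourier transform; by Thm.~\ref{thm:PWSsect} together with \eqref{eq:PolyPWS} this yields the exactness in the middle of
\[
\prescript{}{l}{PWS}_{\delta,0}(\aL^*_\C)\xrightarrow{\;Q\;}\prescript{}{l}{PWS}_{\tau,0}(\aL^*_\C)\xrightarrow{\;P\;}\prescript{}{l}{PWS}_{\gamma,0}(\aL^*_\C),
\]
a complex of modules over the ring $R:=\Pol(\lambda_1^2,\lambda_2^2)$ on which $Q,P$ act by multiplication with polynomial matrices. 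By Thm.~\ref{thm:Meta3SL4R} each $\prescript{}{l}{PWS}_{*,H}(\aL^*_\C)$, $*\in\{\delta,\tau,\gamma\}$, is a finitely generated free module over $S:=\Hol(\lambda_1^2,\lambda_2^2)$ on polynomial generators, and $\prescript{}{l}{PWS}_{*,0}(\aL^*_\C)$ is finitely generated free over $R$ on the same generators; hence $\prescript{}{l}{PWS}_{*,H}(\aL^*_\C)\cong S\otimes_R\prescript{}{l}{PWS}_{*,0}(\aL^*_\C)$, and $Q,P$ are obtained from the polynomial operators above by $-\otimes_R S$.

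The new input is that $S$ is flat over $R$. Since $R$ now has Krull dimension $2$ and is no longer a principal ideal domain, torsion-freeness of $S$ over $R$ does \emph{not} imply flatness, so this must be proved. Setting $w_i:=\lambda_i^2$ one identifies $R\cong\C[w_1,w_2]$ and $S\cong\mathcal{O}(\C^2)$, the ring of entire functions on $\C^2$ (an entire function on $\C^2$ even in each variable factors uniquely through the squaring map). As $R$ is Noetherian, it suffices to check $\operatorname{Tor}_1^R(R/\mathfrak{p},S)=0$ for every prime $\mathfrak{p}\subset R$, and every prime of $\C[w_1,w_2]$ is $(0)$, a principal prime $(f)$, or a maximal ideal $\mathfrak{m}_a=(w_1-a_1,w_2-a_2)$. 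The case $(0)$ is trivial; for $(f)$ the resolution $0\to R\xrightarrow{f}R\to R/(f)\to0$ gives $\operatorname{Tor}_1^R(R/(f),S)=\{s\in S:fs=0\}=0$ because $\mathcal{O}(\C^2)$ is an integral domain; for $\mathfrak{m}_a$ the Koszul resolution identifies $\operatorname{Tor}^R_\bullet(R/\mathfrak{m}_a,S)$ with the Koszul homology of $(w_1-a_1,w_2-a_2)$ acting on $S$, which vanishes in positive degrees since that sequence is regular on $\mathcal{O}(\C^2)$: $w_1-a_1$ is a non-zero-divisor, and division by $w_1-a_1$ identifies $\mathcal{O}(\C^2)/(w_1-a_1)$ with $\mathcal{O}(\C)$, in which $w_2-a_2$ is again a non-zero-divisor. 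Thus $S$ is (faithfully) flat over $R$; alternatively one may quote the classical flatness of $\mathcal{O}(\C^n)$ over $\C[z_1,\dots,z_n]$.

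To finish, I would tensor the exact-in-the-middle complex of free $R$-modules from the first paragraph by the flat module $S$. Writing $K:=\Ker(P_0)=\Impart(Q_0)\subset\prescript{}{l}{PWS}_{\tau,0}(\aL^*_\C)$, flatness preserves the exactness of $0\to K\to\prescript{}{l}{PWS}_{\tau,0}(\aL^*_\C)\to\Impart(P_0)\to0$ and the injection $\Impart(P_0)\hookrightarrow\prescript{}{l}{PWS}_{\gamma,0}(\aL^*_\C)$, so $\Ker(P)=K\otimes_R S$; it also turns the surjection $\prescript{}{l}{PWS}_{\delta,0}(\aL^*_\C)\twoheadrightarrow K$ into a surjection onto $K\otimes_R S$, so $\Impart(Q)=K\otimes_R S$ as well. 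By the identifications of the first paragraph this is precisely $\Impart(Q)=\Ker(P)$ in \eqref{eq:shortseqSL4R}.

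The main obstacle is the flatness step: in one variable the base ring was a PID and torsion-freeness was enough, whereas in two variables that argument collapses and one genuinely has to verify the vanishing of $\operatorname{Tor}_1$ against every prime. This is where the explicit polynomial description of the Paley-Wiener-Schwartz spaces (Thm.~\ref{thm:Meta3SL4R}) becomes indispensable, since it reduces the whole question to finitely generated free modules over $\C[w_1,w_2]$; the only geometric fact then needed is that $(w_1-a_1,w_2-a_2)$ is a regular sequence in the ring of entire functions on $\C^2$.
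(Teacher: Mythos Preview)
Your proof is correct and follows essentially the same route as the paper: reduce to the exact polynomial sequence via Thm.~\ref{thm:PWSsect}, use Thm.~\ref{thm:Meta3SL4R} to write $\prescript{}{l}{PWS}_{*,H}(\aL^*_\C)\cong\Hol(\lambda_1^2,\lambda_2^2)\otimes_{\Pol(\lambda_1^2,\lambda_2^2)}\prescript{}{l}{PWS}_{*,0}(\aL^*_\C)$, and then tensor along the flat extension. The only difference is that the paper simply quotes H\"ormander's Lemma~7.6.4 for the flatness of $\Hol(\lambda_1^2,\lambda_2^2)$ over $\Pol(\lambda_1^2,\lambda_2^2)$, whereas you supply a direct Tor/Koszul argument---which you yourself note is interchangeable with the classical citation.
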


\begin{proof}
Using Thm.~\ref{thm:Meta3SL4R}, we obtain as in the proof of Thm.~\ref{thm:Hyp1SL2R}
$$\prescript{}{l}{PWS}_{*,H}(\aL^*_\C) \cong \Hol(\lambda_1^2,\lambda_2^2) \otimes_{\Pol(\lambda_1^2,\lambda_2^2)}
\prescript{}{l}{PWS}_{*,0}(\aL^*_\C), \;\; *=\delta,\tau, \gamma.$$
We can proceed as in the proof of Thm.~\ref{thm:Hyp1SL2R},
except that here $\Pol(\lambda_1^2,\lambda_2^2)$ is not a principal ideal domain.
However, by using \cite[Lem.~7.6.4]{Hormander}, we see that
$\Hol(\lambda_1^2,\lambda_2^2)$ is still a flat module over $\Pol(\lambda_1^2,\lambda_2^2)$.
\end{proof}

Next, we deal with Hyp.~\ref{thm:estimateL3}.

\begin{thm}[Estimate result in (\textcolor{blue}{Level 3})] \label{thm:Hyp2SL4R}
Fix $l=(l_1,l_2) \in \Z^2$ and let $P\in \prescript{}{\tau}{PWS}_{\gamma,0}(\aL^*_\C)$ as in Thm.~\ref{thm:Hyp1SL4R}.
Then, there exist constants $M\in \N_0$ and, for all $r\geq 0$, $N\in \N_0$, $C_{r,N} >0$ such that for each $\prescript{}{l}{u}\in \prescript{}{l}{PWS}_{\tau,H}(\aL^*_\C)$ with
$\|P\prescript{}{l}{u}\|_{r,N}< \infty$
one can find $\prescript{}{l}{v}\in \prescript{}{l}{PWS}_{\tau}(\aL^*_\C)$ satisfying
\begin{itemize}
\item[(i)] $P\prescript{}{l}{u}=P\prescript{}{l}{v}$ and
\item[(ii)]
$\|\prescript{}{l}{v}\|_{r,N+M} \leq C_{r,N}\|P\prescript{}{l}{u}\|_{r,N}.$
\end{itemize}
\end{thm}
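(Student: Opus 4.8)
The plan is to run the proof along the same lines as that of Thm.~\ref{thm:Hyp23SL2R}, the only structural change being forced by the base ring: here the relevant ring of ``scalars'' is $\Pol(\lambda_1^2,\lambda_2^2)$, which is no longer a principal ideal domain, so the elementary Bézout/gcd argument must be replaced by Hörmander's division estimates for polynomial matrices in several variables, \cite[Thm.~7.6.11 and Cor.~7.6.12]{Hormander}. As a by-product the induction on $d_\gamma$ of the rank-one proof becomes unnecessary, since Hörmander's results treat the whole polynomial matrix $P$ at once. Because $l$ is fixed throughout, I allow $M$ and $C_{r,N}$ to depend on $l$; tracking the dependence of Hörmander's constants on the matrix and on $\deg q_{l,n_j}$ in order to get bounds uniform in $l$ — i.e.\ Hyp.~\ref{hyp:constants} — is exactly the step I do \emph{not} carry out, which is why only partial results are obtained for products of hyperbolic planes.

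First I would cancel the generators $q_{n,m}$ and reduce to a polynomial matrix equation. Writing $E_\tau=\bigoplus_j E_{n_j}$ and $E_\gamma=\bigoplus_i E_{m_i}$, the operator $P$ has components $P_{ij}\in\prescript{}{n_j}{PWS}_{m_i,0}(\aL^*_\C)$, so by Thm.~\ref{thm:Meta3SL4R} we have $P_{ij}=a_{ij}\,q_{n_j,m_i}$ with $a_{ij}\in\Pol(\lambda_1^2,\lambda_2^2)$, and likewise $\prescript{}{l}{u}_j=\prescript{}{l}{h}_j\,q_{l,n_j}$, $(P\prescript{}{l}{u})_i=\prescript{}{l}{b}_i\,q_{l,m_i}$ with $\prescript{}{l}{h}_j,\prescript{}{l}{b}_i$ holomorphic, even in each variable, of exponential type with polynomial weights. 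From the factorisation $q_{n,m}(\lambda_1,\lambda_2)=q_{n_1,m_1}(\lambda_1)q_{n_2,m_2}(\lambda_2)$ the relation \eqref{eq:rlnm} splits as $r^l_{n,m}=r^{l_1}_{n_1,m_1}\cdot r^{l_2}_{n_2,m_2}$, and by Lem.~\ref{lem:rlnm} this ranges over only finitely many polynomials. Cancelling the common factor $q_{l,m_i}$ from $\sum_j P_{ij}\prescript{}{l}{u}_j=(P\prescript{}{l}{u})_i$ turns $P\prescript{}{l}{v}=P\prescript{}{l}{u}$ into the polynomial matrix equation $\tilde A\,\tilde h=\tilde b$ over $\Pol(\lambda_1^2,\lambda_2^2)$, where $\tilde A=(a_{ij}\,r^l_{n_j,m_i})_{ij}$, $\tilde b=(\prescript{}{l}{b}_i)_i$, and $\tilde h=(\prescript{}{l}{h}_j)_j$ is one (entire, even) solution.

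Next I would feed this into the several-variable estimates. Since $\tilde b$ already lies in the image of $\tilde A$ over the ring of entire functions, \cite[Thm.~7.6.11 and Cor.~7.6.12]{Hormander} — applied with the weight $(1+|\lambda|^2)^N e^{r|\Repart(\lambda)|}$, which after the substitution $\lambda\mapsto i\lambda$ is Hörmander's support-function weight of the closed ball of radius $r$ — produce a solution $\tilde h'$ of $\tilde A\,\tilde h'=\tilde b$ with $|\tilde h'(\lambda)|\le C(1+|\lambda|^2)^{\mu}\sup_{|z|\le\rho}|\tilde b(\lambda+z)|$, where $C,\mu,\rho$ depend only on $\tilde A$. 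To retain evenness in each $\lambda_j$ — needed so that $\tilde h'$ recombines with the $q$'s into an element of $\prescript{}{l}{PWS}_\tau(\aL^*_\C)$ and not merely of the $H$-space — I would replace $\tilde h'$ by its average over the group $\{\pm1\}^2$ acting by $\lambda_j\mapsto\pm\lambda_j$; since $\tilde A$ and $\tilde b$ are already even and the weights are invariant under sign changes, the averaged function still solves $\tilde A\,\tilde h'=\tilde b$ with the same estimate. Putting $\prescript{}{l}{v}_j:=\tilde h'_j\,q_{l,n_j}$, Thm.~\ref{thm:Meta3SL4R} gives $\prescript{}{l}{v}:=(\prescript{}{l}{v}_j)_j\in\prescript{}{l}{PWS}_\tau(\aL^*_\C)$, and running the cancellation backwards yields $P\prescript{}{l}{v}=P\prescript{}{l}{u}$, which is (i).

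For (ii) I would relate $\tilde b$ back to $\prescript{}{l}{w}:=P\prescript{}{l}{u}$. As each $q_{l,m_i}$ is a product of monic linear factors in the single variables $\lambda_1,\lambda_2$, applying Lem.~\ref{lem:PDE2} in the form \eqref{eq:Lem6} once in each variable gives $|\prescript{}{l}{b}_i(\lambda)|\le\sup_{|z_1|,|z_2|\le1}|\prescript{}{l}{w}_i(\lambda_1+z_1,\lambda_2+z_2)|$; combining this with Hörmander's bound on $\tilde h'$ and the (fixed, since $l$ is fixed) polynomial estimate for $q_{l,n_j}$ one obtains $\|\prescript{}{l}{v}(\lambda)\|_{\mathrm{op}}\le C'(1+|\lambda|^2)^{M}\sup_{|z|\le\rho+1}|\prescript{}{l}{w}(\lambda+z)|$ for a suitable $M$ absorbing $\mu$ and $\deg q_{l,n_j}$, and then multiplying through by $(1+|\lambda|^2)^{-(N+M)}e^{-r|\Repart(\lambda)|}$ and taking the supremum over $\lambda$ — absorbing the shift $z$ as in the proof of Thm.~\ref{thm:Hyp23SL2R} — yields $\|\prescript{}{l}{v}\|_{r,N+M}\le C_{r,N}\|P\prescript{}{l}{u}\|_{r,N}$. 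The hard part here is the invocation of Hörmander's several-variable division machinery and the matching of its weight bookkeeping with the norms $\|\cdot\|_{r,N}$: in contrast to the rank-one situation, where $\Pol(\lambda^2)$ being a PID allowed a fully self-contained argument via Bézout and Lem.~\ref{lem:PDE2}, here one must use \cite{Hormander} essentially as a black box, and the price is that the constants pick up an uncontrolled dependence on $l$ through $\tilde A$ and $\deg q_{l,n_j}$ — so this route does not, by itself, deliver Hyp.~\ref{hyp:constants}.
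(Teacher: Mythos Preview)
Your proposal is correct and follows essentially the same route as the paper: factor out the generators $q_{l,*}$ via Thm.~\ref{thm:Meta3SL4R} to reduce $P\prescript{}{l}{v}=P\prescript{}{l}{u}$ to a polynomial matrix equation $\tilde A\tilde h=\tilde b$ over $\Pol(\lambda_1^2,\lambda_2^2)$, invoke H\"ormander's Thm.~7.6.11 to solve it with estimates, average over $\{\pm1\}^2$ to restore evenness, set $\prescript{}{l}{v}_j=\tilde h'_j\,q_{l,n_j}$, and absorb $\deg q_{l,n_j}$ into~$M$.

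The one genuine deviation is in how you pass from $\prescript{}{l}{b}_i$ back to $\prescript{}{l}{w}_i$. The paper first proves a separate two-variable analogue of Lem.~\ref{lem:PDE2} (Lem.~\ref{lem:PDE2V2}, leading to \eqref{eq:LemPDE2new}), valid for arbitrary polynomials in two variables, and applies it to $q_{l,m_i}$. You instead exploit the product structure $q_{l,m_i}(\lambda_1,\lambda_2)=q_{l_1,m_{i,1}}(\lambda_1)\,q_{l_2,m_{i,2}}(\lambda_2)$ and apply the one-variable Lem.~\ref{lem:PDE2} once in each coordinate. Both are valid; your variant is slightly more elementary and, since the one-variable factors are monic, even gives constant~$1$ in that step, whereas the paper's Lem.~\ref{lem:PDE2V2} produces a constant $|p_k(v)|^{-1}$ depending on the top homogeneous part. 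The paper's lemma is more general (it would still apply if the relevant polynomial did not factor), but for the $q_{l,m}$ actually occurring here your shortcut suffices. Your explicit remark that H\"ormander's matrix result makes the induction on $d_\gamma$ from Thm.~\ref{thm:Hyp23SL2R} unnecessary is also in line with the paper, which indeed drops that induction in the $\SL(2,\R)^d$ proof.
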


Before we start to prove Thm.~\ref{thm:Hyp2SL4R}, we need to adapt Lem.~\ref{lem:PDE2} for our situation.\\
Consider a polynomial $p$ in two variables $z=(z_1,z_2) \in \C \times \C$.
It can be decomposed into homogeneous components
$p = \sum_{l=0}^k p_k,$ where $p_k \neq 0.$
For all $s\in \C\backslash\{0\}$, we have 
\begin{equation} \label{eq:rulepsz}
p_l(s\cdot z)=s^l p_l(z), \;\; \forall l=0, \dots, k.
\end{equation}
If $p_k \neq 0$, then there exists $v \in \C^2\backslash\{0\}$ such that $p_k(v) \neq 0$ and $|v|=1$.
Fix now $\lambda \in \C \times \C$ and let $p_\lambda :  \C \rightarrow  \C$ be defined by
$$p_\lambda(z):=p(\lambda+z\cdot v).$$
\begin{lem} \label{lem:PDE2V2}
The polynomial $p_\lambda$ is of degree $k$ with largest coefficient $a_k=a_{k,\lambda} \neq 0$ and independent of $\lambda \in \C \times  \C$.
In fact, $a_k=p_k(v)$.
\end{lem}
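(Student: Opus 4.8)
The plan is to substitute the affine line $z \mapsto \lambda + z\cdot v$ into the homogeneous decomposition $p = \sum_{l=0}^{k} p_l$ and keep track of the top-degree term in the single variable $z$. First I would observe that, by linearity, $p_\lambda(z) = \sum_{l=0}^{k} p_l(\lambda + z\cdot v)$, so it suffices to analyze each summand $p_l(\lambda + z\cdot v)$ separately and identify which one produces the highest power of $z$.

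The key step is to show that, for each $l$, the function $z \mapsto p_l(\lambda + z\cdot v)$ is a polynomial in $z$ of degree at most $l$, whose coefficient of $z^l$ equals $p_l(v)$. For $z \neq 0$ I would write $\lambda + z\cdot v = z\,(v + z^{-1}\lambda)$ and invoke the homogeneity relation \eqref{eq:rulepsz}, which gives $p_l\big(z(v + z^{-1}\lambda)\big) = z^l\, p_l(v + z^{-1}\lambda)$. Expanding $p_l(v + z^{-1}\lambda)$ as a polynomial in $z^{-1}$, its constant term is $p_l(v)$, so $p_l(\lambda + z\cdot v) = z^l p_l(v) + (\text{terms of lower degree in } z)$ as an identity of polynomials in $z$ (the identity holds on $\C \setminus \{0\}$ and hence on all of $\C$). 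Summing over $l$, the coefficient of $z^k$ in $p_\lambda$ receives a contribution only from $l = k$ and equals $p_k(v)$, because every $p_l$ with $l < k$ contributes only to powers $z^m$ with $m \leq l < k$. Since $p_k(v) \neq 0$ by the choice of $v$, we conclude $\deg p_\lambda = k$ with leading coefficient $a_k = p_k(v)$, which manifestly does not depend on $\lambda$.

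There is no genuine obstacle here: the assertion is an elementary computation once one exploits the homogeneity of the $p_l$. The only point requiring a small amount of care is the value $z = 0$, where the rescaling trick $\lambda + z v = z(v + z^{-1}\lambda)$ is not literally valid; this is handled by noting that an identity of polynomials in $z$ that holds on $\C \setminus \{0\}$ automatically extends to all of $\C$.
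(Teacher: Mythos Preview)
Your argument is correct and follows essentially the same route as the paper: both use the factorization $\lambda + zv = z(v + z^{-1}\lambda)$ together with the homogeneity relation \eqref{eq:rulepsz} to isolate the top-degree contribution. The only cosmetic difference is that the paper extracts the leading coefficient via the limit $a_k = \lim_{z\to\infty} z^{-k} p_\lambda(z)$, whereas you track the coefficient of $z^k$ directly in the expansion; the content is the same.
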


\begin{proof}
We have
\begin{eqnarray*}
a_k=\lim_{z \rightarrow \infty} z^{-k} p_\lambda(z)
=\lim_{z \rightarrow \infty} z^{-k} \sum_{l=0}^k p_l(\lambda+z\cdot v)
&=&\lim_{z \rightarrow \infty} z^{-k} \sum_{l=0}^k p_l\Big(z\Big(\frac{\lambda}{z}+v\Big)\Big) \\
& \stackrel{\eqref{eq:rulepsz}}{=} &\lim_{z \rightarrow \infty} \sum_{l=0}^k  z^{l-k} p_l\Big(\frac{\lambda}{z}+v\Big) \\
&=&p_k(v). \qedhere
\end{eqnarray*}
\end{proof}
\noindent
Now, let $f$ be an entire function on $\C^2$.
Set $f_\lambda(z):=f(\lambda+z\cdot v)$.
By Lem.~\ref{lem:PDE2}, we get 
$|f_\lambda(0)| \leq \frac{1}{|a_k|} \sup_{|z|=1} |p_\lambda(z) f_\lambda(z)|$, i.e.
\begin{equation} \label{eq:LemPDE2new}
|f(\lambda)| \leq \underbrace{\frac{1}{|a_k|}}_{=:C_p} \sup_{|z|= 1} |p(\lambda+z\cdot v) f(\lambda+z\cdot v)| \leq C_p \sup_{\mu \in \C^2, |\mu|\leq 1} |p(\lambda+\mu) f(\lambda+\mu)|,
\end{equation}
where the constant $C_p$ depends on the highest homogeneous part of $p$.

\begin{proof}[Proof of Thm.~\ref{thm:Hyp2SL4R}]
We decompose $E_\tau = \bigoplus_{j=1}^{d_\tau} E_{n_j}$ and $E_\gamma = \bigoplus_{i=1}^{d_\gamma} E_{m_i}$ into irreducible ones.
Note that $n_j, m_i$ are now tuples of integers.
Then $\prescript{}{l}{u}$ has components $\prescript{}{l}{u}_j \in \prescript{}{l}{PWS}_{n_j}(\aL^*_\C)$, $\prescript{}{l}{w} := P \prescript{}{l}{u}$ has components $\prescript{}{l}{w}_i \in \prescript{}{l}{PWS}_{m_i}(\aL^*_\C)$, and $P$ is given by a matrix $(P_{ij})$ with $P_{ij} \in \prescript{}{n_j}{PWS}_{m_i,0}(\aL^*_\C).$
Then
\begin{eqnarray*}
\prescript{}{l}{u}_j &=&\prescript{}{l}{h_j} \cdot q_{l,n_j}  \label{eq:lv2} \\
P_{ij} &=& a_{ij} \cdot q_{n_j,m_i} \label{eq:P2} \\
\prescript{}{l}{w}_i &=&\prescript{}{l}{b} \cdot q_{l,m_i}  \label{eq:lw2}
\end{eqnarray*}
for some
$\prescript{}{l}{h}_j, \prescript{}{l}{b}_i \in \Hol(\lambda_1^2,\lambda_2^2)$ and $a_{ij} \in \Pol(\lambda_1^2,\lambda_2^2)$.
We want to find $\prescript{}{l}{v}_j\in \prescript{}{l}{PWS}_{n_j}(\aL^*_\C)$ so that $P\prescript{}{l}{v}=\prescript{}{l}{w},$ and to estimate $\prescript{}{l}{v}$ in terms of $\prescript{}{l}{w}$.
As in \eqref{eq:rlnm}, we define polynomials $r^l_{n,m}$ by
$$q_{n,m} \cdot q_{l,n}= r^l_{n,m} \cdot q_{l,m}.$$
We set $\prescript{}{l}{\tilde{a}_{ij}}:=a_{ij}r^l_{n_j,m_i}$ and obtain $Pu=w \iff \sum_{j=1}^{d_\tau} \prescript{}{l}{\tilde{a}_{ij}} \prescript{}{l}{h_j}= \prescript{}{l}{b_i}, i=1, \dots d_\gamma.$\\
Let $\widetilde{A}:=(\prescript{}{l}{\tilde{a}_{ij}})_{\substack{i=1, \dots, d_\gamma \\ j=1, \dots, d_\tau}}.$
Hörmander's estimate \cite[Thm.~7.6.11]{Hormander} tells us that there exist constants $\tilde{M}, C_{r,N}$ so that for every $\prescript{}{l}{h} \in \Hol(\lambda_1,\lambda_2)^{d_\tau}$ with
$\|\widetilde{A}\prescript{}{l}{h}\|_{r,N}< \infty,$
for some $N\in \N_0$, $r\geq 0$, one can find $\prescript{}{l}{\tilde{h}} \in \Hol(\lambda_1,\lambda_2)^{d_\tau}$ such that
$$\sum_{j=1}^{d_\tau} \prescript{}{l}{\tilde{a}}_{ij}\prescript{}{l}{\tilde{h}}_j= \sum_{j=1}^{d_\tau}\prescript{}{l}{\tilde{a}}_{ij}\prescript{}{l}{h}_j=\prescript{}{l}{b}_i, \;\;\; i=1, \dots, d_\gamma$$ 
and
$ 
\|\prescript{}{l}{\tilde{h}}\|_{r,N+\tilde{M}} \leq C_{r,N}\|\prescript{}{l}{b}\|_{r,N}.
$
We set
$$\prescript{}{l}{\tilde{h}'}_j(\lambda_1,\lambda_2)
:=\frac{1}{4}( \prescript{}{l}{\tilde{h}}_j(\lambda_1,\lambda_2)
+\prescript{}{l}{\tilde{h}}_j(-\lambda_1,\lambda_2)
+\prescript{}{l}{\tilde{h}}_j(\lambda_1,-\lambda_2)
+\prescript{}{l}{\tilde{h}}_j(-\lambda_1,-\lambda_2)).$$
Then, $\prescript{}{l}{\tilde{h}'} \in \Hol(\lambda_1^2, \lambda_2^2)^{d_\tau}$.
Note that $\prescript{}{l}{\tilde{h}'}_j$ is still a solution of $ \sum_{j=1}^{d_\tau} \prescript{}{l}{\tilde{a}_{ij}} \prescript{}{l}{\tilde{h}'}_j=\prescript{}{l}{b_i},$ since the functions $\tilde{a}_{ij}$ and $\prescript{}{l}{b_i}$ are even in $\lambda_1$ and $\lambda_2$.
We also have
$
\|\prescript{}{l}{\tilde{h}'}\|_{r,N'+\tilde{M}'} \leq C_{r,N'}\|\prescript{}{l}{b}\|_{r,N}.
$
By using Lem.~\ref{lem:PDE2V2}, in particular \eqref{eq:LemPDE2new}, we have
$$\|\prescript{}{l}{b}_i\|_{r,N} \leq C_{q_{l,m_i}} \|\prescript{}{l}{b}_i q_{l,m_i}\|_{r,N} =C_{q_{l,m_i}} \|\prescript{}{l}{w}_i\|_{r,N}$$
 and thus $\|\prescript{}{l}{b}\|_{r,N} \leq C_l |\prescript{}{l}{w}\|_{r,N}$
for some $C_l.$
We obtain
\begin{equation} \label{eq:estimatelh}
\|\prescript{}{l}{\tilde{h}'}\|_{r,N+\tilde{M}} \leq C_{l,r,N} \|\prescript{}{l}{w}\|_{r,N}.
\end{equation}
We set
$
\prescript{}{l}{v}_j(\lambda_1,\lambda_2)=\prescript{}{l}{\tilde{h}'}_j(\lambda_1,\lambda_2) \cdot q_{l,n_j}(\lambda_1,\lambda_2)
$
and $M_l:=\tilde{M}+d_l$, where $d_l \geq \frac{\deg q_{l,n_j}}{2}, j=1, \dots, d_\tau$.
Then, $P\prescript{}{l}{v}=\prescript{}{l}{w}$ and we can estimate
$$\|\prescript{}{l}{v}\|_{r,N+M_l} \leq C'_l \|\prescript{}{l}{\tilde{h}}\|_{r,N+\tilde{M}} \stackrel{\eqref{eq:estimatelh}}{\leq} C'_{l,r,N} \|\prescript{}{l}{w}\|_{r,N}. \qedhere $$
\end{proof}

In the above proof the constant $M=M_l$ depends heavily on the $K$-type $l \in \Z^2$ (as do the constants $C_{r,N}=C'_{l,r,N}$).
Thus, the methods used here are not sufficient to establish Hyp.~\ref{hyp:constants}.
For that we would need a refinement of Hörmander's results in the Euclidean case \cite[Sect.~7]{Hormander} involving divisibility by polynomials of the form $q_{l,n}$.
Hence we cannot prove Conj.~\ref{conj:1} in the present case.
However, its $K$-finite analogue is true by Prop.~\ref{prop:1}.\\
Theorems~\ref{thm:Meta3SL4R}-\ref{thm:Hyp2SL4R} immediately generalize to
$$G=\SL(2,\R)^d=\underbrace{\SL(2,\R) \times \cdots \times \SL(2,\R)}_{d \text{ times}}, \;\;\;\; d\geq 2.$$
Summarizing, we obtain the following theorem.
\begin{thm} \label{thm:new2}
Let $G=\SL(2,\R)^d$ and $D \in \DD_G(\E_\gamma, \E_\tau)$ be an $G$-invariant differential operator between sections of homogeneous vector bundles over $\underbrace{\HH^2 \times \cdots \times \HH^2}_{d \text{ times}}$.
Then, $D$ is solvable on $K$-finite sections. \qed
\end{thm}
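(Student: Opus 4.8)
The plan is to reduce the statement to Prop.~\ref{prop:2}, which asserts that $K$-finite solvability of $D$ follows as soon as $D$ satisfies Hyp.~\ref{lem:PQ0L3} and Hyp.~\ref{thm:estimateL3}. Hence it suffices to check that Theorems~\ref{thm:Hyp1SL4R} and \ref{thm:Hyp2SL4R}, proved above for $d=2$, remain valid when $\SL(2,\R)^2$ is replaced by $G=\SL(2,\R)^d$ for arbitrary $d$. This is, as indicated in the text, a routine transcription of the two-factor arguments, the only genuinely new input being the $d$-fold version of the Level~3 intertwining description.

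First I would record the $d$-fold analogue of Thm.~\ref{thm:Meta3SL4R}. Writing $\aL^*_\C \cong \C^d$ with coordinates $\lambda=(\lambda_1,\dots,\lambda_d)$, and defining for $K$-types $l,n \in \Z^d$ with $l_i \equiv n_i$ (mod $2$) the polynomial
$$q_{l,n}(\lambda) := \prod_{i=1}^d q_{l_i,n_i}(\lambda_i) \in \Pol(\aL^*_\C,\Hom(E_l,E_n)),$$
one shows that $\varphi \in \Hol(\aL^*_\C,\Hom_M(E_l,E_n))$ satisfies the intertwining condition $(3.iii)$ if and only if $\varphi = h \cdot q_{l,n}$ for some $h \in \Hol(\lambda_1^2,\dots,\lambda_d^2)$, the ring of entire functions even in each variable. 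Indeed, since $G=\SL(2,\R)^d$ is a direct product, its (derived) principal series are outer tensor products of those of the factors, so the intertwining data — proper submodules of reducible principal series and the Knapp--Stein/Želobenko intertwiners — are tensor products of the corresponding data for $\SL(2,\R)$, and the resulting germ relations decouple coordinatewise, giving exactly the one-factor conclusion of \cite{PalmirottaIntw} in each $\lambda_i$. Consequently $\prescript{}{l}{PWS}_{*,H}(\aL^*_\C)$ is free and finitely generated over $\Hol(\lambda_1^2,\dots,\lambda_d^2)$ with polynomial generators, $\prescript{}{l}{PWS}_{*,0}(\aL^*_\C)$ is the corresponding free $\Pol(\lambda_1^2,\dots,\lambda_d^2)$-module, and
$$\prescript{}{l}{PWS}_{*,H}(\aL^*_\C) \cong \Hol(\lambda_1^2,\dots,\lambda_d^2) \otimes_{\Pol(\lambda_1^2,\dots,\lambda_d^2)} \prescript{}{l}{PWS}_{*,0}(\aL^*_\C), \qquad *=\delta,\tau,\gamma.$$

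With this in hand, Hyp.~\ref{lem:PQ0L3} follows exactly as in the case $d=2$: the degree-$0$ sequence $\prescript{}{l}{PWS}_{\delta,0} \to \prescript{}{l}{PWS}_{\tau,0} \to \prescript{}{l}{PWS}_{\gamma,0}$ is exact by Thm.~\ref{thm:PWSsect} applied to the defining exact sequence \eqref{eq:DDO} of $\widetilde{D}^t$, and tensoring with $\Hol(\lambda_1^2,\dots,\lambda_d^2)$ preserves exactness because, by \cite[Lem.~7.6.4]{Hormander}, this ring is flat over $\Pol(\lambda_1^2,\dots,\lambda_d^2)$. For Hyp.~\ref{thm:estimateL3} I would decompose $E_\tau,E_\gamma$ into irreducible $K$-types, write $\prescript{}{l}{u}_j = \prescript{}{l}{h}_j\, q_{l,n_j}$, $P_{ij} = a_{ij}\, q_{n_j,m_i}$, $\prescript{}{l}{w}_i = \prescript{}{l}{b}_i\, q_{l,m_i}$, pass to the polynomial matrix $\widetilde{A}=(a_{ij}\, r^l_{n_j,m_i})$ on $\C^d$ (with $r^l_{n,m}$ defined as before by $q_{n,m}\cdot q_{l,n} = r^l_{n,m}\cdot q_{l,m}$), apply Hörmander's multivariable estimate \cite[Thm.~7.6.11]{Hormander}, symmetrize the resulting solution by averaging over all sign changes $\lambda_i \mapsto -\lambda_i$ to land back in $\Hol(\lambda_1^2,\dots,\lambda_d^2)$, and finally multiply by $q_{l,n_j}$; the $d$-variable analogue of Lem.~\ref{lem:PDE2V2}, used to control $\prescript{}{l}{b}_i$ by $\prescript{}{l}{w}_i = \prescript{}{l}{b}_i\, q_{l,m_i}$ on unit balls, is established along the same lines. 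The constants $M$ and $C_{r,N}$ will depend on $l$, but this is harmless since Prop.~\ref{prop:2} needs no uniformity in the $K$-type. Having verified Hyp.~\ref{lem:PQ0L3} and Hyp.~\ref{thm:estimateL3} for $G=\SL(2,\R)^d$, Prop.~\ref{prop:2} gives exactness of $C^\infty(X,\E_\gamma)_K \to C^\infty(X,\E_\tau)_K \to C^\infty(X,\E_\delta)_K$, i.e.\ $D$ is solvable on $K$-finite sections. I expect the only delicate point to be the first step — certifying that the Level~3 intertwining conditions for $\SL(2,\R)^d$ genuinely decouple into the product form $h\cdot q_{l,n}$ with $h$ even in each $\lambda_i^2$; everything downstream is a direct adaptation of the $d=2$ proofs combined with Hörmander's Euclidean results.
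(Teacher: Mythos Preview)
Your proposal is correct and follows exactly the paper's approach: the paper states that Theorems~\ref{thm:Meta3SL4R}--\ref{thm:Hyp2SL4R} ``immediately generalize'' to $\SL(2,\R)^d$, and then invokes the $K$-finite solvability criterion (Prop.~\ref{prop:2}, requiring only Hyp.~\ref{lem:PQ0L3} and Hyp.~\ref{thm:estimateL3}). You have simply unpacked this generalization in detail---the product form $q_{l,n}(\lambda)=\prod_i q_{l_i,n_i}(\lambda_i)$, flatness of $\Hol(\lambda_1^2,\dots,\lambda_d^2)$ over $\Pol(\lambda_1^2,\dots,\lambda_d^2)$ via \cite[Lem.~7.6.4]{Hormander}, H\"ormander's estimate \cite[Thm.~7.6.11]{Hormander}, and symmetrization over the $2^d$ sign changes---all of which are the direct $d$-variable analogues of the $d=2$ arguments, with the observation that the $l$-dependence of the constants is irrelevant for Prop.~\ref{prop:2}.
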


\subsection{Solvability on the 3-hyperbolic space $\HH^3$}
Let
$G=\SL(2,\C)$ and $K=\SU(2)$ its maximal compact subgroup.
The quotient $X=G/K$ can be identified with hyperbolic 3-space $\HH^3$.
In \cite[Thm.~9, 10 and 14]{PalmirottaIntw}, we have described explicitly Delorme's intertwining conditions for $G=\SL(2,\C)$ for the three levels. 
The description allows to establish Hyp.~\ref{lem:PQ0L3} and Hyp.~\ref{hyp:constants} in a similar - but in case of Hyp.~\ref{hyp:constants} much more complicated - way as for $G=\SL(2, \R)$.
Here, we will only discuss Hyp.~\ref{lem:PQ0L3}.
The discussion of Conj.~\ref{conj:1} and its variants is left to a forthcoming paper, where in fact all real hyperbolic spaces $\HH^n, n \geq 3$, will be treated.\\

Let $\mu, \tau \in \widehat{K}$ as in \cite{PalmirottaIntw}, we equipped $ \prescript{}{\mu}{PWS}_{\tau,H}(\aL^*_\C)$ with the structure of a $\Hol(\lambda^2)$-module and showed that it is a free module with finitely many (explicitly given) polynomial generators.
This, in particular, implied the following.

\begin{prop}[\cite{PalmirottaIntw}, Thm.~11 and Cor.~1] \label{cor:Anmflat}
For $\mu, \tau \in \widehat{K}$, we have an isomorphism between the pre-Paley-Wiener-Schwartz space $\prescript{}{\mu}{PWS}_{\tau,H}(\aL^*_\C)$ and 
$\emph \Hol(\lambda^2) \otimes_{\emph \Pol(\lambda^2)} \prescript{}{\mu}{PWS}_{\tau,0}(\aL^*_\C).$ \QEDA
\end{prop}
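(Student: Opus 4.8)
This is the $\SL(2,\C)$-counterpart of the isomorphism \eqref{eq:new2} used in the proof of Thm.~\ref{thm:Hyp1SL2R}, and I would establish it in the same spirit. The crucial input is the explicit form of the intertwining condition $(3.iii)$ for $G=\SL(2,\C)$ worked out in \cite{PalmirottaIntw}: by analysing the Knapp--Stein and Želobenko intertwining operators at the reducibility points of the spherical principal series, one shows that $\prescript{}{\mu}{PWS}_{\tau,H}(\aL^*_\C)$ is stable under multiplication by $\Hol(\lambda^2)$ and is in fact a free $\Hol(\lambda^2)$-module of some finite rank $N$, with an explicit basis $g_1,\dots,g_N \in \Pol(\aL^*_\C,\Hom_M(E_\mu,E_\tau))$ consisting of polynomials (this is Thm.~11 of \cite{PalmirottaIntw}). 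Note that here $\aL^*_\C \cong \C$, so that $\Pol(\lambda^2)=\C[\lambda^2]$ is a principal ideal domain.

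The plan is then to show that $\prescript{}{\mu}{PWS}_{\tau,0}(\aL^*_\C)$ is the free $\Pol(\lambda^2)$-module on the very same generators $g_1,\dots,g_N$. By \eqref{eq:PolyPWS}, $\prescript{}{\mu}{PWS}_{\tau,0}(\aL^*_\C)$ is exactly the subspace of polynomial elements of $\prescript{}{\mu}{PWS}_{\tau,H}(\aL^*_\C)$, so the inclusion $\bigoplus_i \Pol(\lambda^2)\,g_i \subseteq \prescript{}{\mu}{PWS}_{\tau,0}(\aL^*_\C)$ is immediate. For the reverse inclusion, take $\varphi \in \prescript{}{\mu}{PWS}_{\tau,0}(\aL^*_\C)$ and write $\varphi = \sum_i h_i g_i$ with uniquely determined $h_i \in \Hol(\lambda^2)$; I must check that each $h_i$ is a polynomial. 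Writing holomorphic functions on $\aL^*_\C\cong\C$ in the form $a(\lambda^2)+\lambda b(\lambda^2)$ and fixing a $\C$-basis of $\Hom_M(E_\mu,E_\tau)$, the generators $g_1,\dots,g_N$ become $\Pol(\lambda^2)$-valued vectors that are linearly independent over the fraction field $\C(\lambda^2)$ (being a fortiori $\Hol(\lambda^2)$-independent), so a suitable $N\times N$ minor $\Delta\in\Pol(\lambda^2)$ of the associated coefficient matrix is a nonzero polynomial. Cramer's rule applied to the system $\varphi=\sum_i h_i g_i$ in these coordinates then gives $\Delta\,h_i\in\Pol(\lambda^2)$ for all $i$; hence each $h_i$ is a rational function of $\lambda^2$ without poles, i.e.\ a polynomial, and $\varphi\in\bigoplus_i\Pol(\lambda^2)\,g_i$ as desired.

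With both module structures in hand, the natural multiplication map
\[
\Hol(\lambda^2) \otimes_{\Pol(\lambda^2)} \prescript{}{\mu}{PWS}_{\tau,0}(\aL^*_\C) \longrightarrow \prescript{}{\mu}{PWS}_{\tau,H}(\aL^*_\C), \qquad h \otimes \varphi \longmapsto h\varphi,
\]
carries the $\Hol(\lambda^2)$-basis $\{1\otimes g_i\}_{i=1}^N$ of the left-hand side (using the freeness just established together with $\Hol(\lambda^2)\otimes_{\Pol(\lambda^2)}\Pol(\lambda^2)^N\cong\Hol(\lambda^2)^N$) onto the $\Hol(\lambda^2)$-basis $\{g_i\}_{i=1}^N$ of the right-hand side, and is therefore an isomorphism of $\Hol(\lambda^2)$-modules. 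I expect essentially all of the difficulty to sit in the first step, namely the explicit determination of the free generators and of the $\Hol(\lambda^2)$-module structure for $\SL(2,\C)$: because $K=\SU(2)$ is non-abelian and the reducibility and intertwining structure of the principal series is considerably richer than for $\SL(2,\R)$, the bookkeeping behind Thm.~11 of \cite{PalmirottaIntw} is delicate. Once that structural result is available, the passage to the tensor-product description is formal, as above.
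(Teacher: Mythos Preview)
Your proposal is correct and follows exactly the approach indicated in the paper: the proposition is not proved here but deduced from \cite{PalmirottaIntw}, Thm.~11, which says that $\prescript{}{\mu}{PWS}_{\tau,H}(\aL^*_\C)$ is a free $\Hol(\lambda^2)$-module with finitely many polynomial generators, and the tensor-product isomorphism then follows formally just as in \eqref{eq:new2}. Your Cramer's-rule argument showing that the same generators freely generate $\prescript{}{\mu}{PWS}_{\tau,0}(\aL^*_\C)$ over $\Pol(\lambda^2)$ is a clean way to make that formal step explicit.
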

\noindent
Using Prop.~\ref{cor:Anmflat}, we can now prove Hyp.~\ref{lem:PQ0L3} with the same arguments as in case of $G=\SL(2,\R)$ (see the proof of Thm.~\ref{thm:Hyp1SL2R}).

\begin{thm} \label{thm:Hyp1SL2C}
We consider three, not necessarily irreducible, $K$-representations $(\delta, E_\delta)$, $(\tau,E_\tau)$ and $(\gamma, E_\gamma)$.
Fix $\mu \in \widehat{K}$. 
Then, the sequence 
\begin{equation*}
 \prescript{}{\mu}{PWS}_{\delta,H}(\aL^*_\C) \stackrel{Q}{\longrightarrow} \prescript{}{\mu}{PWS}_{\tau,H}(\aL^*_\C) \stackrel{P}{\longrightarrow} \prescript{}{\mu}{PWS}_{\gamma,H}(\aL^*_\C)
\end{equation*}
is exact in the middle, that means that  $\emph \Impart(Q)=\emph \Ker(P)$. \QEDA
\end{thm}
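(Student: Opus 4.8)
The plan is to follow the proof of Thm.~\ref{thm:Hyp1SL2R} essentially line by line, replacing the structural input Thm.~\ref{thm:Meta3SL2R} by its $\SL(2,\C)$-counterpart Prop.~\ref{cor:Anmflat}. First I would take the defining $(\g,K)$-module exact sequence \eqref{eq:DDO} attached to $\widetilde{D}^t$,
$$\U(\g)\otimes_{\U(\kk)}E_{\delta} \xrightarrow{\ \widetilde{D}^t\ } \U(\g)\otimes_{\U(\kk)}E_{\tau} \xrightarrow{\ D^t\ } \U(\g)\otimes_{\U(\kk)}E_{\gamma}$$
(with $D,\widetilde{D},\delta,\tau,\gamma$ and the dual representations labelled exactly as in Sect.~\ref{sect:H2}), restrict it to the $\mu$-isotypic component, and apply the topological Paley--Wiener--Schwartz isomorphism Thm.~\ref{thm:PWSsect}(b) together with \eqref{eq:PolyPWS}. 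Since the Fourier transform turns an invariant differential operator into multiplication by a polynomial matrix, this gives the exactness in the middle of
$$\prescript{}{\mu}{PWS}_{\delta,0}(\aL^*_\C) \xrightarrow{\ Q\ } \prescript{}{\mu}{PWS}_{\tau,0}(\aL^*_\C) \xrightarrow{\ P\ } \prescript{}{\mu}{PWS}_{\gamma,0}(\aL^*_\C),$$
i.e.\ $\Impart(Q)=\Ker(P)$ at the level of polynomial symbols.

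Next I would feed in Prop.~\ref{cor:Anmflat}: for each $*=\delta,\tau,\gamma$ the space $\prescript{}{\mu}{PWS}_{*,H}(\aL^*_\C)$ is a free, finitely generated $\Hol(\lambda^2)$-module on the explicit polynomial generators of \cite{PalmirottaIntw}, so that $\prescript{}{\mu}{PWS}_{*,0}(\aL^*_\C)$ is the free $\Pol(\lambda^2)$-module on the \emph{same} generators and one has a natural isomorphism
$$\prescript{}{\mu}{PWS}_{*,H}(\aL^*_\C)\ \cong\ \Hol(\lambda^2)\ \otimes_{\Pol(\lambda^2)}\ \prescript{}{\mu}{PWS}_{*,0}(\aL^*_\C).$$
Because $Q$ and $P$ act by multiplication by polynomial matrices they are $\Pol(\lambda^2)$-linear on the $0$-level modules, and their $H$-level versions are precisely the base-changed maps $\mathrm{id}\otimes Q$ and $\mathrm{id}\otimes P$. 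Now $G=\SL(2,\C)$ has real rank one, so $\aL^*_\C\cong\C$ and $\Pol(\lambda^2)\cong\C[\lambda^2]$ is a principal ideal domain; the $\C[\lambda^2]$-module $\Hol(\lambda^2)$ is torsion free, hence flat. Consequently, tensoring the exact $0$-level sequence with $\Hol(\lambda^2)$ over $\Pol(\lambda^2)$ preserves exactness in the middle, and the displayed isomorphism identifies the result with the sequence in the statement, which is therefore exact in the middle.

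As in the $\SL(2,\R)$ case, the only genuine bookkeeping is to check that decomposing $E_\delta,E_\tau,E_\gamma$ into $K$-irreducibles turns $Q$ and $P$ into honest matrices with entries in the appropriate spaces $\prescript{}{n}{PWS}_{m,0}(\aL^*_\C)$ and that, under the free-module identifications of Prop.~\ref{cor:Anmflat}, these matrices send the distinguished generators to $\Pol(\lambda^2)$-combinations of the target generators; this is exactly what makes $\mathrm{id}\otimes Q$, $\mathrm{id}\otimes P$ coincide with the $H$-level operators. I do not expect a real obstacle here: flatness of $\Hol(\lambda^2)$ over $\C[\lambda^2]$ is elementary and the rest is formal — which is precisely why the rank-one complex case goes through, in contrast with $\SL(2,\R)^d$, where $\Pol(\lambda_1^2,\dots,\lambda_d^2)$ fails to be a PID and one must instead invoke \cite[Lem.~7.6.4]{Hormander}. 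The actual difficulty, deferred to a forthcoming paper, lies not here but in Hyp.~\ref{thm:estimateL3} and especially Hyp.~\ref{hyp:constants}, where divisibility by the generators $q_{n,m}$ has to be controlled with estimates uniform in the $K$-type $\mu$.
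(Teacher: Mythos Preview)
Your proposal is correct and follows essentially the same approach as the paper: the paper explicitly says that Thm.~\ref{thm:Hyp1SL2C} is proved ``with the same arguments as in case of $G=\SL(2,\R)$ (see the proof of Thm.~\ref{thm:Hyp1SL2R})'', using Prop.~\ref{cor:Anmflat} in place of Thm.~\ref{thm:Meta3SL2R}, which is exactly what you do. Your additional remarks on $\Pol(\lambda^2)$-linearity, flatness via the PID property in rank one, and the contrast with $\SL(2,\R)^d$ are accurate and consistent with the paper's discussion.
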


\subsection*{Acknowledgement}
The second author is supported by the Luxembourg National Research Fund (FNR) under
the project \\ PRIDE15/10949314/GSM.


\medskip


\begin{thebibliography}{ABCD18}
\bibitem[vdBS14]{vandenBan} 
van den Ban E. P. and Souaifi S.,
\textit{A comparison of Paley-Wiener theorems}.
J. Reine Angew. Math. 695, (2014), 99-149. 

\bibitem[BoT82]{Bott} 
Bott R. and Tu L.W.,
\textit{Differential Forms in Algebraic Topology.}
GTM 82, Springer-Verlag, 1982.
\bibitem[Ce75]{Cerezo} 
Cèrezo A.,
\textit{Sur les équations invariantes par un groupe}. (in French),
Séminaire Équations aux dérivées partielles (Polytechnique), ``Séminaire Goulaouic-Schwartz", Exposé no. 21, (1975).

\bibitem[Del05]{Delorme} 
Delorme P.,
\textit{Sur le théorème de Paley-Wiener d'Arthur}. (in French),
Ann. of Math. 162, (2005), 987-1029.


\bibitem[DuRa76]{DufloRais} 
Duflo M. and Raïs M.,
\textit{Sur l'analyse harmonique sur les groupes de Lie résolubles}. (in French),
Ann. Sci. de l'Ecole Norm. Superieure, Série 4, Tome 9, No. 1, (1976), 107-144.

\bibitem[Ehr54]{Ehrenpreis1} 
Ehrenpreis L.,
\textit{Solution of some problems of division. I. Division by a polynomial of
derivation}.
Am. J. Appl. Math. 76, No. 4, (1954), 883-903.

\bibitem[Ehr61]{Ehrenpreis3} 
Ehrenpreis L.,
\textit{A fundamental principle for systems of linear differential equations with constant coefficients and some if its applications}.
Proceedings International Symposium on Linear Spaces, (1961), 161-174.

\bibitem[Ehr70]{Ehrenpreis4} 
Ehrenpreis L.,
\textit{Fourier analysis in several complex variables}.
Pure App. Math. XVII, Wiley-Interscience Publication, New York, 1970.
\bibitem[Hel75]{HelgasonL}
Helgason S.,
\textit{Solvability of Invariant Differential Operators on Homogeneous Manifolds}.
Differential Operators on Manifold, C.I.M.E., Varenna, 1975.

\bibitem[Hel89]{Helgason3}
Helgason S.,
\textit{Geometric Analysis on Symmetric Spaces}.
 Second edition,
AMS, Math. surv. monogr. 39, 1989.

\bibitem[Hel20]{Helgason2}
Helgason S.,
\textit{Groups and Geometric Analysis}, Integral Geometry, Invariant differential operators and spherical functions, AMS, Math. surv. monogr. 83, 2000.


\bibitem[Hör73]{Hormander} 
Hörmander L.,
\textit{An introduction to complex analysis in several variables}.
North-Holland Publishing Company, 1973.

\bibitem[Jac62]{Jacobson} 
Jacobson N.,
\textit{Lie Algebras}.
Dover Books on Mathematics, 1962.

\bibitem[KS94]{Kashi} 
Kashiwara M. and Schmid W.,
\textit{Quasi-equivariant $D$-modules, equivariant derived category, and representations of reductive Lie groups}.
Lie Theory and Geometry. In honor of Bertram Konstant, Progress in Mathematics 123, Birkhäuser, (1994), 457-488.

\bibitem[Ka08]{Kashiwara} 
Kashiwara M.,
\textit{Equivariant Derived Category and Representation of Real Semisimple Lie Groups}.
Representation Theory and Complex Analysis, Springer Verlag, (2008), 137-234.

\bibitem[Kna02]{Knapp1} 
Knapp A.W.,
\textit{Lie Groups Beyond an Introduction}.
2nd Edition, Prog. Math., Birkh\"auser, 2002.

\bibitem[KoRe00]{Reimann} 
Koranyi A. and Reimann H. M.,
\textit{Equivariant first order differential operators on boundaries of symmetric spaces}.
Inv. Math.139, Springer-Verlag, (2000), 371-390.
\bibitem[Mal56]{Malgrange} 
Malgrange B., 
\textit{Existence et approximation des solutions des équations aux dérivées 
partielles et des équations de convolution}. (in French), Ann. de l'Institut Fourier 66, (1956), 271-355.

\bibitem[Mal61]{MalgrangeS1} 
Malgrange B., 
\textit{Sur les systèmes différentiels à coefficients constants}. (in French),
Séminaire Jean Leray, Paris, Exposé 7, (1962), 1961-1962.

\bibitem[Mal64]{MalgrangeS2} 
Malgrange B., 
\textit{Systèmes différentiels à coefficients constants}. (in French),
Séminaire Bourbaki 8, Exposé 246, (1964).
\bibitem[Olb95]{Martin}
Olbrich M.,
\textit{Die Poisson-Tranformation für homogene Vektorbündel}. (in German),
Doctoral Dissertation, HU Berlin, 1995.


\bibitem[OlPa24]{PalmirottaPWS} 
Olbrich M. and Palmirotta G.,
\textit{A topological Paley-Wiener-Schwartz Theorem for sections over homogeneous vector bundles on $G/K$}, J. Lie Theory 34(2), (2024), 353--384, 

\bibitem[OlPa23]{PalmirottaIntw} 
Olbrich M. and Palmirotta G.,
\textit{Delorme's intertwining conditions for sections of homogeneous vector bundles on two and three dimensional hyperbolic spaces}, Ann. Glob. Anal. Geom. 63(1),9, (2023).


\bibitem[OSW91]{Oshima}
Oshima T., Saburi Y. and Wakayama M.,
\textit{Paley-Wiener theorems on a symmetric space and their application}.
Elsevier, Differ Geom Appl, Volume 1, Issue 3, (1991), 247-278.

\bibitem[Pal63]{Palamodov1} 
Palamodov V.P., 
\textit{The general theorems on the systems of linear equations with constant coefficients}.
Outlines of the joint Soviet-American symposium on partial differential equations, (1963), 206-213.

\bibitem[Pal70]{Palamodov2} 
Palamodov V.P., 
\textit{Linear differential operators with constant coefficients}.
Grundlehren Math. Wiss. 168, Springer-Verlag, 1970.

\bibitem[Pa21]{PalmirottaDiss} 
Palmirotta G.,
\textit{Solvability of systems of invariant differential equations on symmetric spaces $G/K$}.
Doctoral Dissertation, Université du Luxembourg, Dec. 2021.


\bibitem[Ra71]{Rais} 
Raïs M.,
\textit{Solutions élémentaires des opérateurs différentiels bi-invariants sur un groupe de Lie nilpotent}. (in French),
Comptes rendus de l'Académie des sciences Paris, Tome 273, Série A, (1971), 495-498, .

\bibitem[Ro76]{Rouviere} 
Rouvière F.,
\textit{Sur la résolubilité locale des opérateurs bi-invariants}. (in French),
Ann. Scuola Norm-SCI, Série 4, Tome 3, (1976), 231-244.

\bibitem[Sch71]{Schaeffer} 
Schaefer H.H.,
\textit{Topological Vector Spaces.}
GTM 3, Springer Verlag, 1971.
\bibitem[Wal73]{Wallach1}
Wallach N.R.,
\textit{Harmonic analysis on homogeneous spaces},
Pure App. Math., Marcel Dekker Inc., 1973.


\bibitem[Wal88]{Wallach}
Wallach N.R.,
\textit{Real Reductive Groups I},
Pure App. Math., Academic Press, 1988.



\end{thebibliography}
\end{document}